\newtheorem{theorem}{Theorem}[section]
\newtheorem{lemma}[theorem]{Lemma}
\newtheorem{remark}[theorem]{Remark}
\newtheorem{proposition}[theorem]{Proposition}
\newtheorem{definition}[theorem]{Definition}
\numberwithin{equation}{section}
\title{Feedback law
to stabilize linear infinite-dimensional  systems\thanks{This work was partially supported by the National Natural Science Foundation of China  under grant 11971022.}}
\author{Yaxing Ma\thanks{School of Mathematics, Tianjin University, Tianjin 300354, China (yaxingma@yeah.net).} \and Gengsheng Wang\thanks{Center for Applied Mathematics, Tianjin University, Tianjin, 300072, China (wanggs@yeah.net).} \and Huaiqiang Yu\thanks{School of Mathematics, Tianjin University, Tianjin 300354, China (huaiqiangyu@tju.edu.cn, huaiqiangyu@yeah.net).}}
\date{}
\begin{document}
\selectlanguage{english}
\maketitle
\begin{abstract}
    We design  a new  feedback law
    to stabilize  the linear infinite-dimensional control system, where the state operator
    generates  a $C_0$-group  and the control operator is unbounded.
     Our feedback law is    based on the integration of
     a mutated  Gramian operator-valued function. In the structure of the aforementioned mutated  Gramian operator, we utilize
       the weak observability inequality  in \cite{Trelat-Wang-Xu, Liu-Wang-Xu-Yu} and
                borrow some idea  used to construct  generalized Gramian operators
                in
                \cite{Komornik-1997, Urquiza, Vest}.
      Unlike most related works  where
    the exact controllability is required, we only assume  the above-mentioned weak observability inequality which is equivalent to
      the stabilizability of the system.

\end{abstract}
\vskip 8pt

 \noindent\textbf{Keywords.} Stabilizability,  feedback law, unbounded control operator, weak observability inequality
    \medskip

       \noindent \textbf{2010 AMS Subject Classifications.} 93B52, 93D05, 93D15
\vskip 10pt

%\tableofcontents
\section{Introduction}\label{sec_intro}
 %For the convenience of readers, we first introduce some notations.
\subsection{Notation}
 Let $\mathbb{R}^+:=[0,+\infty)$, $\mathbb{N}:=\{0,1,\ldots\}$ and $\mathbb{N}^+:=\{1,2,\ldots\}$. Given a Hilbert space $X$, we
 use $X'$, $\|\cdot\|_{X}$ and $\langle\cdot,\cdot\rangle_{X}$ to denote
 its dual space,  norm and inner product respectively; write  $\langle\cdot,\cdot\rangle_{X,X'}$
   for the dual product between $X$ and $X'$;  identify $X''$ (the dual space of $X'$) with $X$;   denote by $C(\mathbb{R}^+;X)$  the space of all continuous functions from $\mathbb{R}^+$ to $X$; write $I$ for the identity operator on $X$.
    When $L$ is  a densely defined and closed linear operator  on a Hilbert space $X$, we let $D(L):=\{x\in X:Lx\in X\}$ and $\|x\|_{D(L)}:=(\|x\|^2_{X}+\|Lx\|^2_{X})^{\frac{1}{2}}$ ($x\in D(L)$), which are the domain of $L$ and  the graph norm on $D(L)$ respectively;  use $L^*$
    to denote its adjoint operator, i.e., $\langle Lx,y\rangle_{X,X'}
    =\langle x,L^*y\rangle_{X,X'}$ $(x\in D(L), y\in D(L^*))$ (see \cite[Chapter 1, Section 1.10]{Pazy});
     use $\rho(L)$ to denote its resolvent set.
     When  $X_1$ and $X_2$ are two Hilbert spaces, we write
    $\mathcal{L}(X_1;X_2)$ for the space of all linear and bounded operators from $X_1$ to $X_2$, and further
    write  $\mathcal{L}(X_1):=\mathcal{L}(X_1;X_1)$.
    Given $F\in \mathcal{L}(X_1;X_2)$, we use $\|F\|_{\mathcal{L}(X_1;X_2)}$ and $F^*\in \mathcal{L}(X_2';X_1')$ to denote its operator norm and adjoint operator respectively.
    We use $C(\cdots)$ to denote a  constant that depends on what is enclosed in the brackets.
    
\subsection{Motivation }
    Stabilization is one of the most important objectives in control theory.
  There are two important subjects on stabilization for linear control systems: The first one is to find
sufficient conditions/equivalent conditions on  stabilizability, such as resolvent conditions   (see, for instance, \cite{Cearhart, Huang, Liu,  Pruss})
and weak observability inequalities (see \cite{ Liu-Wang-Xu-Yu, Trelat-Wang-Xu, Xu}).  The second one is to design feedback laws (see, for instance, \cite{Flandoli-Lasiecka-Triggiani, Kleinman, Komornik-1997, Lasiecka-Triggiani, Russell, Slemrod, Urquiza, Vest, Zabczyk}). For the latter,
we would like to mention two usual  methods: using  Riccati equations (see, for instance, \cite{Flandoli-Lasiecka-Triggiani, Lasiecka-Triggiani, Zabczyk}); using Gramian operators (see, for instance, \cite{Kleinman, Komornik-1997, Russell, Slemrod, Urquiza, Vest}). They all  have their own advantages and disadvantages. We aim to design feedback laws by the way using Gramian operators.

 It is well known that when matrices $A\in\mathbb{R}^{n\times n}$ and $B\in \mathbb{R}^{n\times m}$  ($n,m\in\mathbb{N}^+$) satisfy the Kalman controllability condition,
the matrix $K=-B^\top G_T^{-1}$ (where  $G_T:=\int_0^Te^{-At}BB^\top e^{-A^\top t}dt$, with $T>0$, is called a Gramian operator) is a feedback law stabilizing the system: $y'(t)=Ay(t)+Bu(t),\;t\geq 0$.
 (See, for instance,  \cite[Chapter 5, Section 5.7]{Sontag}.)
  Such idea has been extended to  the infinite-dimensional settings (see, for instance, \cite{Bourquin, Coron, Komornik-1997, Liu, Russell-1978, Slemrod, Urquiza, Vest}).
  To our best knowledge,   all existing  papers, which use Gramian operators to design  feedback laws,  need
 the following hypothesis (which is mainly used to ensure the invertibility of Gramian operators):
 \begin{itemize}
  \item [$\widehat{(H)}$] The  system  is exactly controllable at some time $T>0$.
\end{itemize}
    However, when researching how to design
         feedback laws, it is more natural
          to use stabilizability instead of controllability as an assumption.
           Indeed, there are many systems
          which are not  controllable but are stabilizable, even completely
     stabilizable   (see examples in \cite[Chapters  5]{Sontag} for finite-dimensional settings
     and in
     \cite{Huang-Wang-Wang, Liu-Wang-Xu-Yu, Trelat-Wang-Xu} for  infinite-dimensional settings).
On the other hand,    the stabilizability of a linear control   system
is equivalent to some weak observability inequality for its dual system  (see \cite{Liu-Wang-Xu-Yu, Trelat-Wang-Xu}).
{\it These motivate us to  design feedback laws via   modified Gramian operators, under assumption that the aforementioned
weak observability inequality holds. }

\subsection{System, hypotheses and definitions}

 {\bf System and hypotheses.}
 Let $H$ and $U$ be two Hilbert spaces. Consider the  control system:
\begin{equation}\label{yu-12-2-b-1}
    x'(t)=Ax(t)+Bu(t),\;\;t>0,
\end{equation}
where $u\in L^2(\mathbb{R}^+;U)$ and the pair $(A,B)$ verifies the following hypotheses:
\begin{enumerate}
     \item[$(H_1)$] The linear operator $A:D(A)\subset H\to H$ is the generator of a $C_0$-group $S(\cdot)$ on $H$;

    \item[$(H_2)$] The operator $B$ belongs to the space $\mathcal{L}(U;D(A^*)')$;

    \item[$(H_3)$] For any $T>0$, there is a constant $C(T)>0$ such that
\begin{equation}\label{yu-11-16-1}
    \int_0^T\|B^*S^*(t)\varphi\|_{U'}^2dt\leq C(T)\|\varphi\|_{H'}^2\;\;\mbox{for any}\;\;\varphi\in D(A^*).
\end{equation}
  %\item [$(H_4)$] There are constants $\alpha>0$, $D(\alpha)\geq 0$, $C(\alpha)\geq 1$ such that
%\begin{equation}\label{yu-11-16-6}
%    \|S^*(t)\varphi\|^2_{H'}\leq D(\alpha)\int_0^t\|B^*S^*(s)\varphi\|_{U'}^2ds
%    +C(\alpha)e^{-\alpha t}\|\varphi\|^2_{H'}\;\;\mbox{for any}\;\;t>0,\;\;
%    \varphi\in D(A^*).
%\end{equation}
\end{enumerate}

\begin{remark}\label{yu-remark-12-2-1}
   Several notes on the above  hypotheses and the system  are given.
\begin{enumerate}
\item [$(i)$] In general, the method by using Gramian operators to design feedback laws works only
for the linear control systems where the state operators generate $C_0$-groups. That is why we make the assumption $(H_1)$.
\item [$(ii)$]  The assumption $(H_2)$ has appeared in  many literature, where some specific examples
were given. (See, for instance, \cite{Komornik-1997, Lasiecka-Triggiani, Tucsnak-Weiss}).
From this assumption, we have $B^*\in \mathcal{L}(D(A^*);U')$. (Here, we identify $D(A^*)''$ with $D(A^*)$.)
The latter is equivalent to the existence of
    $\lambda\in \mathbb{C}$ and
    $E\in \mathcal{L}(U;H)$  such that $B^*=E^*(\lambda I+A)^*$ (see, for instance, \cite{ Tucsnak-Weiss, Vest}).
 Indeed,  we can choose $\lambda\in \rho(-A)$ and set $E:=(B^*((\lambda I+A)^{-1})^*)^*$.
 From now on, we fix $(\lambda, E)\in \mathbb{C}\times\mathcal{L}(U;H)$ satisfying
 $B^*=E^*(\lambda I+A)^*$.
\item [$(iii)$]  The assumption $(H_3)$ is called a regularity property in \cite{Coron,Lasiecka-Triggiani} or an admissibility condition
 in \cite{Tucsnak-Weiss}.
    With respect to this assumption, we have the following facts:
\begin{enumerate}
  \item [$(iii_{1})$] The condition $(H_3)$ is equivalent to the existence of  $T>0$ and  $C(T)>0$ so that (\ref{yu-11-16-1}) holds (see  \cite[Chapter 2, Section 2.3]{Coron});

  \item [$(iii_{2})$] If  $(H_1)$-$(H_3)$ are true,
   then
for any $T>0$, there is $C(T)>0$ so that
\begin{equation}\label{1.3,1-12}
    \int_{-T}^T\|B^*S^*(t)\varphi\|^2_{U'}dt \leq C(T)\|\varphi\|^2_{H'} \;\;\mbox{for any}
    \;\;\varphi\in D(A^*).
\end{equation}
    Consequently,  the  mapping $(\varphi\in D(A^*))\to (t\to B^*S^*(t)\varphi)\in L^2_{loc}(\mathbb{R};U')$ can be extended, in a unique way, to  a continuous operator, denoted by  $\widetilde{B^*S^*}(\cdot)$,  from $H'$ to $L^2_{loc}(\mathbb{R};U')$ (see
    \cite[Chapter 4, Section 4.3]{Tucsnak-Weiss}).
    Here, $L^2_{loc}(\mathbb{R};U')$ is regarded as a Fr\'{e}chet space with the seminorms:
    $\{\|\cdot\|_{L^2(-n,n;U')}\;:\;n\in\mathbb{N}^+\}$. 
\end{enumerate}
\item [$(iv)$]  The solutions of  (\ref{yu-12-2-b-1}) will be defined in the sense of transposition:
    a function $x(\cdot)\in C(\mathbb{R}^+;H)$ is called a solution of (\ref{yu-12-2-b-1}), with
    $u\in L^2(\mathbb{R}^+;U)$ and  $x(0)=x_0\in H$, if
\begin{equation*}\label{yu-nnn-2}
    \langle x(t),\varphi\rangle_{H,H'}
    =\langle x_0,S^*(t)\varphi\rangle_{H,H'}
    +\int_0^t\langle u(s),B^*S^*(t-s)\varphi\rangle_{U,U'}ds\;\;\mbox{for any}\;\;\varphi\in D(A^*), \;t\in \mathbb{R}^+.
\end{equation*}
    It deserves mentioning that under $(H_1)$-$(H_3)$, for any $x_0\in H$ and $u\in L^2(\mathbb{R}^+;U)$, the system (\ref{yu-12-2-b-1}), with initial condition $x(0)=x_0$, has a unique solution (see, for instance, \cite[Lemma 2.1]{Komornik-1997}).
    We denote it by $x(\cdot;x_0,u)$.

%\item [$(a_5)$]  In this paper, we do not identify $H$ (or $U$) with its dual space $H'$ (or $U'$).
%    One of the  reasons is that we want to put  boundary controlled hyperbolic-type equations into our framework.

%\item [$(a_3)$]  The inequality (\ref{yu-11-16-6}) in $(H_4)$
%   has appeared  in
%     \cite{Liu-Wang-Xu-Yu} where it is called  the weak observability inequality.
%         When $(H_1)$-$(H_3)$ hold,  we can show that the assumption
%     $(H_4)$ is equivalent to the following assumption:
%\begin{enumerate}
%  \item [$\widehat{(H_4)}$] There are $\delta\in(0,1)$, $T>0$ and $D(\delta,T)\geq 0$ such that
%  \begin{equation}\label{yu-11-16-5}
%    \|S^*(T)\varphi\|^2_{H'}\leq D(\delta,T)\int_0^T\|B^*S^*(s)\varphi\|_{U'}^2ds
%    +\delta\|\varphi\|^2_{H'}\;\;\mbox{for any}\;\;
%    \varphi\in D(A^*).
%\end{equation}
%\end{enumerate}
%    (See Proposition \ref{yu-proposition-12-9-1} in Section \ref{yu-subsec-12-9-1}).
%    The inequality (\ref{yu-11-16-5}) (which is also called a weak observability inequality) is built up in \cite{Trelat-Wang-Xu}.
%
\end{enumerate}
\end{remark}

\noindent {\bf Stabilizability and weak observability inequality.} We first recall that the operator $A$ has a unique extension $\widetilde{A}\in \mathcal{L}(H;D(A^*)')$ (see
    \cite[Proposition 2.10.4]{Tucsnak-Weiss} or \cite[Lemma 3.4]{Vest}), which is defined by
     \begin{equation}\label{1.3,202211}
     \langle \varphi, A^*\psi\rangle_{H,H'}=\langle \widetilde{A}\varphi, \psi\rangle_{D(A^*)',D(A^*)},\;\;
     \varphi\in H, \psi\in D(A^*).
     \end{equation}
         We now give the
     definition on the stabilizability for the system (\ref{yu-12-2-b-1}) which is quoted from \cite{Liu-Wang-Xu-Yu}.

\begin{definition}\label{yu-definition-11-17-1}
The  system (\ref{yu-12-2-b-1}) is said to be
  exponentially stabilizable (stabilizable, for simplicity), if there is a constant $\omega>0$,  a $C_0$-semigroup $\mathcal{S}(\cdot)$ on $H$ (with the generator $\mathcal{A}: D(\mathcal{A})\subset H\to H$) and an operator
  $K\in \mathcal{L}(D(\mathcal{A});U)$ so that
\begin{enumerate}
  \item [$(i)$] there is a constant $C_1\geq 1$ such that $\|\mathcal{S}(t)\|_{\mathcal{L}(H)}
  \leq C_1e^{-\omega t}$ for all  $t\in\mathbb{R}^+$;
  %\item [$(a_2)$] It holds that, for any $t\in\mathbb{R}^+$, $x_0\in D(\mathcal{A})$ and $\varphi\in D(A^*)$,
%\begin{equation}\label{yu-11-17-3}
%    \langle \mathcal{S}(t)x_0,\varphi\rangle_{H,H'}
%  =\langle x_0,S^*(t)\varphi_0\rangle_{H,H'}
%  +\int_0^t\langle K\mathcal{S}(s)x_0,B^*S^*(t-s)\varphi\rangle_{U,U'}ds;
%\end{equation}
\item[$(ii)$] for any $x\in D(\mathcal{A})$,  $\mathcal{A} x=(\widetilde{A}+BK)x$, with $\widetilde{A}$ given by \eqref{1.3,202211};
  \item [$(iii)$] there is a constant  $C_2\geq 0$ so that $\|K\mathcal{S}(\cdot)x\|_{L^2(\mathbb{R}^+;U)}
  \leq C_2\|x\|_H$ for any $x\in D(\mathcal{A})$.
\end{enumerate}

       The above $K$ and $\omega$
       are  called respectively a feedback law and  a stabilization decay rate (a decay rate, for simplicity).
   When the above $\omega$,   $\mathcal{S}(\cdot)$ and
  $K$ exist, we also say  that  $K$ is a feedback law stabilizing the system  (\ref{yu-12-2-b-1}) with the decay rate $\omega$.

\end{definition}

\begin{remark}\label{yu-remark-12-2-2}
    Several notes on Definition \ref{yu-definition-11-17-1} are as follows:
    \begin{enumerate}
\item[$(i)$]
The above definition comes originally from \cite{Flandoli-Lasiecka-Triggiani} which shows that
 the finite cost condition of the   LQ problem: $\inf_{u\in L^2(\mathbb{R}^+;U)}\int_0^{\infty}[\|x(t;x_0,u)\|_H^2+\|u(t)\|_U^2]dt$ implies the   stabilizability in the sense of Definition \ref{yu-definition-11-17-1}.
  It deserves mentioning that the above  stabilizability is equivalent to
  the above finite cost condition (see  \cite[Proposition 3.9]{Liu-Wang-Xu-Yu}).

\item [$(ii)$]  In general, $\widetilde{A}+BK$ is not the generator of the semigroup $\mathcal{S}(\cdot)$, except for the case $B\in\mathcal{L}(U;H)$. This operator  is only a densely defined restriction of such a generator
     (see \cite{Flandoli, Flandoli-Lasiecka-Triggiani, Komornik-1997, Lasiecka-Triggiani, Weiss-Rebarber}). The detailed explanation  is given  in the proof of our main theorem.

%\item [$(c_3)$] The quantity $\omega^*$ defined in (\ref{yu-12-19-1}) is the same as that defined by (4) in \cite{Trelat-Wang-Xu}.
%        It corresponds to   the definition of the growth bound for a $C_0$-semigroup (see, for instance,
%       \cite[Chapter 4, Section 2]{Engel-Nagel}). Notice that in general, $\omega^*$ is best stabilization decay rate of  the system (\ref{yu-12-2-b-1}) does not mean that $\omega^*$ is a decay rate of the system (\ref{yu-12-2-b-1}).
\item [$(iii)$] It was proved in \cite[Section 3.3]{Trelat-Wang-Xu} (see also \cite[Theorem 3.4]{Liu-Wang-Xu-Yu} for the complete stabilizability) that the stabilizability of the system (\ref{yu-12-2-b-1}) is equivalent to the following weak observability inequality for the dual system
    of  (\ref{yu-12-2-b-1}):

    There exists $\delta\in(0,1)$, $T>0$ and $C(\delta,T)\geq 0$ such that
  \begin{equation}\label{yu-11-16-5}
    \|S^*(T)\varphi\|^2_{H'}\leq C(\delta,T)\int_0^T\|B^*S^*(s)\varphi\|_{U'}^2ds
    +\delta\|\varphi\|^2_{H'}\;\;\mbox{for any}\;\;
    \varphi\in D(A^*).
\end{equation}
On the other hand,  \eqref{yu-11-16-5} is equivalent to what follows (see Proposition \ref{yu-proposition-12-9-1} in Section \ref{yu-subsec-12-9-1}):

There exists $\alpha>0$, $C_1(\alpha)\geq 0$ and $C_2(\alpha)\geq 1$ such that
\begin{equation}\label{yu-11-16-6}
    \|S^*(t)\varphi\|^2_{H'}\leq C_1(\alpha)\int_0^t\|B^*S^*(s)\varphi\|_{U'}^2ds
    +C_2(\alpha)e^{-\alpha t}\|\varphi\|^2_{H'}\;\;\mbox{for any}\;\;t>0,\;\;
    \varphi\in D(A^*),
\end{equation}
which  is also  called a weak observability inequality.
%Though \eqref{yu-11-16-5} is equivalent to \eqref{yu-11-16-6}, they have some difference:
%the former is static in time, while the latter is  dynamic in time. In certain cases, one may get more information from
% \eqref{yu-11-16-6}.

%\item[$(c_5)$] It is proved in \cite{Liu-Wang-Xu-Yu} that the complete stabilizability of system (\ref{yu-12-2-b-1}) is equivalent to that
   % for each $\alpha>0$, there exists $D(\alpha)\geq 0$ and $C(\alpha)\geq 1$ such that the inequality (\ref{yu-11-16-6}) holds.

  %\item [$(c_{33})$] in \cite{Trelat-Wang-Xu}, the authors have proved that
%$$
%    \omega^*=\sup\left\{-\frac{\ln \delta}{2T}:(\delta,T)\in(0,1)\times(0,+\infty):\exists\;D(\delta,T)\geq 0\;\mbox{s.t.}\;(\ref{yu-11-16-5})\;\mbox{holds}\right\}\footnote{It deseves
%    mentioning that, in \cite{Trelat-Wang-Xu}, $\overline{\omega}:=\inf\left\{\frac{\ln \delta}{2T}:(\delta,T)\in(0,1)\times(0,+\infty):\exists\;D(\delta,T)\geq 0\;\mbox{s.t.}\;(\ref{yu-11-16-5})\;\mbox{holds}\right\}$ is called the best stabilization decay rate. It is clear that
%    $\overline{\omega}=-\omega^*$.}.
%$$

\end{enumerate}
\end{remark}
Inspired by the note $(iii)$ in Remark \ref{yu-remark-12-2-2}, we further make the following hypothesis:
\begin{enumerate}
  \item [${(H_4)}$] There exists $\alpha>0$, $C_1(\alpha)\geq 0$ and $C_2(\alpha)\geq 1$ such that
  \eqref{yu-11-16-6} holds.
\end{enumerate}

\subsection{Main result}
    To state our main result, we need to introduce some operators. First of all, we let $J_1:U'\to U$
and $J_2:H'\to H$ be the    canonical isomorphisms given by
Riesz-Fr\'{e}chet representation theorem (see, for instance, \cite[Chapter 5, Section 5.2]{Brezis}). It should be noticed that $J_1$ and $J_2$ are conjugate-linear operators. Next, we
let $\alpha>0$, $C_1(\alpha)\geq 0$ and $C_2(\alpha)\geq 1$ be given in
    $(H_4)$. Now, for any $\varepsilon\in[0,\alpha)$,  $T>0$ and $t\in [0,T]$,  we define
    an  operator $\Lambda_{\alpha,\varepsilon,T}(t): H'\to H$ by
\begin{eqnarray}\label{yu-12-3-1}
    \langle \Lambda_{\alpha,\varepsilon,T}(t)\varphi,\psi\rangle_{H,H'}&=&
      C_1(\alpha)e^{\alpha T}\int_0^t e^{-(\alpha-\varepsilon)s}\langle J_1\widetilde{B^*S^*}(-s)\varphi,
      \widetilde{B^*S^*}(-s)\psi\rangle_{U,U'}ds\nonumber\\
      &\;&+C_2(\alpha)e^{-(\alpha-\varepsilon)t}\langle J_2S^*(-t)\varphi, S^*(-t)\psi\rangle_{H,H'}
      \;\;\mbox{for any}\;\;\varphi,\psi\in H',
\end{eqnarray}
where  $\widetilde{B^*S^*}(\cdot)$ is given in  $(iii_{2})$ of Remark \ref{yu-remark-12-2-1},
  and then define another operator $\Pi_{\alpha,\varepsilon,T}: H'\to H$ via
  \begin{equation}\label{yu-11-12-3}
    \langle \Pi_{\alpha,\varepsilon,T}\varphi,\psi\rangle_{H,H'}:=
    \int_0^T\left\langle\Lambda_{\alpha,\varepsilon,T}(t)\varphi ,\psi\right\rangle_{H,H'}dt
    \;\;\mbox{for any}\;\;\varphi,\psi\in H'.
\end{equation}
     It is clear that   both
    $\Lambda_{\alpha,\varepsilon,T}(t)$  and
     $\Pi_{\alpha,\varepsilon,T}$ are conjugate-linear. Moreover, we can show that
    $\Lambda_{\alpha,\varepsilon,T}(t)$  and
     $\Pi_{\alpha,\varepsilon,T}$ are bounded, and $\Pi_{\alpha,\varepsilon,T}^{-1}$ exists (see  Lemma \ref{yu-lemma-12-3-2}).
   Finally, for each $T$ satisfying
   \begin{equation}\label{1.7,12-27w}
   T\in \mathcal{I}_{\alpha}:=(\alpha^{-1}\ln[C_2(\alpha)], +\infty),
   \end{equation}
   we write $\widehat{\varepsilon}:=T^{-1}\ln[C_2(\alpha)]$, and then define
   an operator  $K_T: \Pi_{\alpha,\widehat{\varepsilon},T}[D(A^*)]\rightarrow U$ via
         \begin{equation}\label{1.9wang}
     K_T:=-TC_1(\alpha)e^{\alpha T}J_1B^*\Pi_{\alpha,\widehat{\varepsilon},T}^{-1}.
\end{equation}

\par
    The main result of this paper is  as follows:

\begin{theorem}\label{yu-theorem-12-3-1}
    Assume that $(H_1)$-$(H_4)$ are true.
    Then  for each $T$ satisfying (\ref{1.7,12-27w}),
    the operator $K_T$, defined by \eqref{1.9wang},  is a feedback law
    stabilizing  the system \eqref{yu-12-2-b-1}
    with the decay rate $\frac{1}{2}\left(\alpha-T^{-1}\ln[C_2(\alpha)]\right)$.
    \end{theorem}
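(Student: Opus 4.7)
The plan is to proceed in three stages: derive a Lyapunov-type operator identity for $\Pi_{\alpha,\widehat{\varepsilon},T}$, construct the closed-loop semigroup $\mathcal{S}(\cdot)$ with exponential decay in an equivalent inner product, and verify the $L^{2}$-condition $(b_{3})$. As a preliminary, since $T>\alpha^{-1}\ln[C(\alpha)]$ we have $\widehat{\varepsilon}=T^{-1}\ln[C(\alpha)]\in[0,\alpha)$, so $\Lambda_{\alpha,\widehat{\varepsilon},T}(t)$ and $\Pi_{\alpha,\widehat{\varepsilon},T}$ are well-defined elements of $\mathcal{L}(H';H)$ through the admissibility extension in $(a_{32})$; by the cited Lemma \ref{yu-lemma-12-3-2} the operator $\Pi_{\alpha,\widehat{\varepsilon},T}$ is invertible, and since $B^{*}\in\mathcal{L}(D(A^{*});U')$, the feedback $K_{T}$ in (\ref{1.9wang}) is well-defined on the natural domain $D(\mathcal{A}):=\Pi_{\alpha,\widehat{\varepsilon},T}[D(A^{*})]\subset H$. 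Moreover, $\Pi_{\alpha,\widehat{\varepsilon},T}$ is symmetric (its defining bilinear form is built from the Riesz maps $J_{1},J_{2}$) and coercive, so it induces an inner product equivalent to that of $H$.

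The core of the argument is an algebraic identity for $\Pi_{\alpha,\widehat{\varepsilon},T}$. Using that $(H_{1})$ supplies a group so that $S^{*}(-t)$ is well-defined and strongly differentiable on $D(A^{*})$ (this is precisely where $(a_{1})$ matters), I would differentiate $t\mapsto\langle\Lambda_{\alpha,\widehat{\varepsilon},T}(t)\varphi,\psi\rangle_{H,H'}$ for $\varphi,\psi\in D(A^{*})$ and integrate over $[0,T]$. The cross-term from differentiating the $S^{*}(-t)$ factors pairs against $A^{*}\varphi$ and $A^{*}\psi$ and reconstructs $\widetilde{A}\Pi_{\alpha,\widehat{\varepsilon},T}+\Pi_{\alpha,\widehat{\varepsilon},T}A^{*}$; differentiating the Gramian integrand produces the term $TD(\alpha)e^{\alpha T}BJ_{1}B^{*}$; the boundary evaluation at $t=T$ is controlled by applying (\ref{yu-11-16-6}) to $S^{*}(-T)\varphi$, and the tuning $\widehat{\varepsilon}=T^{-1}\ln[C(\alpha)]$ is exactly what makes the resulting $C(\alpha)e^{-(\alpha-\widehat{\varepsilon})T}$-prefactor combine with $D(\alpha)$ and the observability bound to yield an extra $2\omega\,\Pi_{\alpha,\widehat{\varepsilon},T}$ term with the correct sign, where $\omega:=\tfrac12(\alpha-\widehat{\varepsilon})$. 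The resulting inequality on $D(A^{*})$ reads schematically
\begin{equation*}
2\,\mathrm{Re}\langle\widetilde{A}\,\Pi_{\alpha,\widehat{\varepsilon},T}\varphi,\varphi\rangle_{D(A^{*})',D(A^{*})}+2\omega\langle\Pi_{\alpha,\widehat{\varepsilon},T}\varphi,\varphi\rangle_{H,H'}+TD(\alpha)e^{\alpha T}\|B^{*}\varphi\|_{U'}^{2}\leq 0.
\end{equation*}

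Next, set $\mathcal{A}x:=(\widetilde{A}+BK_{T})x$ on $D(\mathcal{A})$ and work in the inner product $\langle x,y\rangle_{\ast}:=\langle\Pi_{\alpha,\widehat{\varepsilon},T}^{-1}x,y\rangle_{H',H}$. For $x=\Pi_{\alpha,\widehat{\varepsilon},T}\varphi$ with $\varphi\in D(A^{*})$, the identity $K_{T}x=-TD(\alpha)e^{\alpha T}J_{1}B^{*}\varphi$ combined with the Lyapunov inequality above converts directly into the dissipativity estimate $\mathrm{Re}\langle\mathcal{A}x,x\rangle_{\ast}\leq-\omega\|x\|_{\ast}^{2}-\tfrac12TD(\alpha)e^{\alpha T}\|B^{*}\varphi\|_{U'}^{2}$. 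A range condition for $\lambda I-\mathcal{A}$ with large $\lambda$, obtained by a bounded perturbation of $(\lambda I-\widetilde{A})^{-1}$ together with the admissibility $(H_{3})$, allows a Lumer-Phillips argument in $(H,\langle\cdot,\cdot\rangle_{\ast})$, producing a $C_{0}$-semigroup $\mathcal{S}(\cdot)$ whose generator agrees with $\widetilde{A}+BK_{T}$ on $D(\mathcal{A})$ and which satisfies $(b_{1})$ with decay rate $\omega$. Integrating the differential inequality $\tfrac{d}{dt}\|\mathcal{S}(t)x\|_{\ast}^{2}\leq-2\omega\|\mathcal{S}(t)x\|_{\ast}^{2}-TD(\alpha)e^{\alpha T}\|B^{*}\Pi_{\alpha,\widehat{\varepsilon},T}^{-1}\mathcal{S}(t)x\|_{U'}^{2}$ over $[0,\infty)$ and inserting the explicit form of $K_{T}$ together with the isometry $J_{1}$ then yields $(b_{3})$.

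The main obstacle is the second stage: tracking the interplay between the integration by parts in $t$, the weak observability inequality (\ref{yu-11-16-6}) at the endpoint $t=T$, and the decaying prefactor $C(\alpha)e^{-(\alpha-\widehat{\varepsilon})t}$, in such a way that precisely a multiple of $\Pi_{\alpha,\widehat{\varepsilon},T}$ with the correct sign appears. This step explains both the explicit formula $\widehat{\varepsilon}=T^{-1}\ln[C(\alpha)]$ and the constraint $T\in\mathcal{I}_{\alpha}$, which is exactly what is needed to guarantee $\omega=\tfrac12(\alpha-\widehat{\varepsilon})>0$. Once the Lyapunov identity has been established, the generation of $\mathcal{S}(\cdot)$, its exponential decay, and the $L^{2}$-bound on $K_{T}\mathcal{S}(\cdot)x$ follow along standard lines.
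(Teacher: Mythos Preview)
Your overall strategy---derive a Lyapunov inequality for $\Pi_{\alpha,\widehat\varepsilon,T}$, use it for dissipativity in the $\langle\cdot,\cdot\rangle_{\ast}$ inner product, then deduce decay and the $L^{2}$-bound---is broadly aligned with the paper, and the Lyapunov step corresponds to its Proposition~\ref{yu-proposition-11-12-1}. Two corrections at that stage: your displayed inequality has the wrong sign on the $B^{*}$ term (the identity actually gives $2\,\mathrm{Re}\langle\widetilde A\Pi\varphi,\varphi\rangle+2\omega\langle\Pi\varphi,\varphi\rangle\le TD(\alpha)e^{\alpha T}\|B^{*}\varphi\|_{U'}^{2}$; the damping appears only after adding $BK_{T}$); and $(H_{4})$ does not enter by ``applying \eqref{yu-11-16-6} to $S^{*}(-T)\varphi$'' at the boundary but via the lower bound $\langle\Lambda_{\alpha,\varepsilon,T}(T)\varphi,\varphi\rangle\ge e^{\varepsilon T}\|\varphi\|_{H'}^{2}$, which makes the remainder $Q_{\alpha,\varepsilon,T}:=\Lambda_{\alpha,\varepsilon,T}(T)-C(\alpha)J_{2}$ nonnegative exactly when $e^{\widehat\varepsilon T}\ge C(\alpha)$.

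The genuine gap is the generation of $\mathcal S(\cdot)$. You invoke Lumer--Phillips on $\mathcal A=\widetilde A+BK_{T}$ and obtain the range condition ``by a bounded perturbation of $(\lambda I-\widetilde A)^{-1}$ together with $(H_{3})$''. This does not work as stated: $BK_{T}$ is not $A$-bounded, since $K_{T}=-TD(\alpha)e^{\alpha T}J_{1}B^{*}\Pi^{-1}$ with $\Pi^{-1}:H\to H'$ and $B^{*}$ only defined on $D(A^{*})$; there is no reason why $B^{*}\Pi^{-1}(\lambda I-\widetilde A)^{-1}$ should be bounded on $H$, and admissibility $(H_{3})$ alone does not supply it. Relatedly, you must first show that $(\widetilde A+BK_{T})x$ lands in $H$ (not just in $D(A^{*})'$) for $x\in\Pi[D(A^{*})]$, or your dissipativity computation in $\langle\cdot,\cdot\rangle_{\ast}$ has no object to act on. The paper circumvents both issues by a conjugation trick: from the Lyapunov equation one reads off $\mathcal A=\Pi(-A^{*}-P)\Pi^{-1}$ with $P=(\alpha-\varepsilon)I+\Pi^{-1}Q_{\alpha,\varepsilon,T}\in\mathcal L(H')$, so $-A^{*}-P$ is a \emph{bounded} perturbation of the group generator $-A^{*}$ and automatically generates a $C_{0}$-group $\mathcal V(\cdot)$ on $H'$; one then defines $\mathcal S(t):=\Pi\,\mathcal V(t)\,\Pi^{-1}$ and verifies afterwards that its generator equals $\widetilde A+BK_{T}$ on $\Pi[D(A^{*})]$. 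This is the missing idea. Once $\mathcal S(\cdot)$ is in hand, your remaining steps---Gronwall in $\langle\cdot,\cdot\rangle_{\ast}$ for $(b_{1})$, and integrating the dissipation identity for $(b_{3})$---match the paper's Step~5.
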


\begin{remark}\label{remark1.5,1.7}
 Some notes on Theorem \ref{yu-theorem-12-3-1} are given.
\begin{enumerate}
  \item [$(i)$] Theorem \ref{yu-theorem-12-3-1}  gives a family of feedback laws $\{K_T\}_{T\in\mathcal{I}_\alpha}$ stabilizing the system \eqref{yu-12-2-b-1},
      and the decay rate corresponding to each $K_T$ has  an explicit expression. All coefficients in
the weak observability inequality \eqref{yu-11-16-6} appear in the expression of $K_T$.

 \item[$(ii)$] In  \eqref{1.9wang}, we only need
$\Pi_{\alpha,\varepsilon,T}^{-1}$ with ${\varepsilon}:=T^{-1}\ln[C_2(\alpha)]$, but in the proof of the main theorem, we will use the family $\{\Pi_{\alpha,\varepsilon,T}^{-1}\}_{{\varepsilon}\in [0,\alpha)}$.

  %\item[$(e_2)$] Theorem \ref{yu-theorem-12-3-1} means that, under the assumptions $(H_1)$-$(H_4)$, let $\alpha>0$ be given in $(H_4)$, then the optimal decay rate
%      of  the system (\ref{yu-12-2-b-1}) is greater than or equals to $\frac{\alpha}{2}$.
  \item[$(iii)$] We now explain our design of the feedback law $K_T$ as follows:
  First, based on the weak observability inequality in $(H_4)$,
 we define an operator $\Lambda_{\alpha,\varepsilon,T}(t)$ (given by \eqref{yu-12-3-1}),
 which can be treated as  a kind of
 mutated  Gramian operator.
  Thus, when $\varepsilon$ is fixed,
    $t\rightarrow \Lambda_{\alpha,\varepsilon,T}(t)$, $t\in [0,T]$, is a mutated  Gramian operator-valued function.
  Second, the operator $\Pi_{\alpha,\varepsilon,T}$ (given by \eqref{yu-11-12-3}) can be viewed as
  the integration of
  the aforementioned function. Thus, each $\Lambda_{\alpha,\varepsilon,T}(t)$ can be treated as a slice of $\Pi_{\alpha,\varepsilon,T}$.
  Third, the feedback $K_T$ (given by \eqref{1.9wang}) is built up
  with the aid of $\Pi_{\alpha,\varepsilon,T}$.

  It deserves mentioning the following two points: First, our structure is based on the
   assumption
  $(H_4)$ quantitatively. However, the feedback laws given in \cite{Komornik-1997, Russell, Slemrod, Urquiza, Vest}
  depend on the assumption of the exact controllability of \eqref{yu-12-2-b-1} qualitatively. (The latter will be explained in more detail in the next subsection.)
  Second, unlike works  \cite{ Komornik-1997, Russell, Slemrod, Urquiza, Vest}, we are not able to design a feedback law by only one slice $\Lambda_{\alpha,\varepsilon,T}(t)$. The main reason is that our assumption
  $(H_4)$ is weaker than the assumption of the observability used in \cite{Komornik-1997, Russell, Slemrod, Urquiza, Vest}.

  \item [$(iv)$] By the proof of Theorem  \ref{yu-theorem-12-3-1}, we would
  obtain a more general result (see Theorem \ref{yu-theorem-12-16-1}).

  \item [$(v)$]  The family $\{K_T\}_{T\in\mathcal{I}_\alpha}$ gives an approximate decay rate
  $\alpha/2$, where $\alpha$ has been fixed in  $(H_4)$. Thus, it seems  that our way to design
  feedback law can only give a fixed decay rate. Fortunately, this is not true.
    Indeed,
  we will show, in Section \ref{section411}, what follows: For each $\mu\in (0,\omega^*)$, where
  \begin{equation}\label{yu-12-19-1}
    \omega^*:=\sup\{\omega\in\mathbb{R}^+:\mbox{the system (\ref{yu-12-2-b-1})
    is stabilizable with decay rate}\;\omega\},
\end{equation}
 we can use our way to design a feedback law stabilizing
the system \eqref{yu-12-2-b-1} with the decay rate $\mu$ (see Theorem \ref{yu-theorem-12-20-1}).

\end{enumerate}

\end{remark}

\subsection{ Novelty and comparison with related works}\label{yu-sec-12-23-1}

For the studies relevant to our current work, we  recall the main results in \cite{Komornik-1997, Urquiza, Vest}.
The papers \cite{Komornik-1997, Vest}
 build up, for each $\omega>0$,
 a  generalized Gramian operator  $G_{T,\omega}: H\rightarrow H'$ (with $T>0$) via
\begin{equation}\label{1.11,1-7}
    \langle G_{T,\omega}\varphi,\psi\rangle_{H,H'}:=
    \int_0^{T_\omega}e_{\omega}(s)\langle J_1\widetilde{B^*S^*}(-s)\varphi, \widetilde{B^*S^*}(-s)\psi\rangle_{U,U'}ds
    \;\;\mbox{for any}\;\;\varphi, \psi\in H',
\end{equation}
   where   $T_{\omega}:=T+(2\omega)^{-1}$ and
$$
    e_{\omega}(s):=
    \begin{cases}
        e^{-2\omega s},&\mbox{if}\;\;s\in[0,T],\\
        2\omega e^{-2\omega T}(T_{\omega}-s),&\mbox{if}\;\;s\in [T,T_{\omega}],
    \end{cases}
$$
and   prove that  $K:=-J_1B^*G_{T, \omega}^{-1}$ is a feedback law stabilizing the system \eqref{yu-12-2-b-1}
with the decay rate $\omega$, where $G_{T,\omega}$ satisfies a Riccati equation.
      The paper \cite{Urquiza}   designs, for each $\omega>0$ large enough, a  generalized
     Gramian operator (which is originally from \cite{Russell} for some finite-dimensional systems):
\begin{equation}\label{1.12,1-7}
    \langle G_\omega\varphi,\psi\rangle_{H,H'}
    :=\int_0^\infty e^{-2\omega s}\langle J_1\widetilde{B^*S^*}(-s)\varphi,\widetilde{B^*S^*}(-s)\psi\rangle_{U,U'}ds
    \;\;\mbox{for any}\;\;\varphi,\psi\in H',
\end{equation}
    and   shows that $K:=-J_1B^*G^{-1}_{\omega}$ is a feedback law stabilizing  the system
    \eqref{yu-12-2-b-1}
    with the decay rate $(2\omega-g(-A))$ (where $g(-A):=\inf_{t>0}\frac{1}{t}\ln\|S(-t)\|_{\mathcal{L}(H)}$), where  $G_{\omega}$ satisfies a Lyapunov equation.

 In the above-mentioned papers \cite{Komornik-1997, Urquiza, Vest}, the assumption $\widehat{(H)}$
 (i.e., the system \eqref{yu-12-2-b-1} is exactly controllable at some time $T>0$) is
 necessary to  ensure the invertibility of the above  generalized
 Gramian operators, while  the
  corresponding  observability inequality is not fully utilized, more precisely, the coefficients in the observability inequality  does not appear  in the design of the feedback laws. Besides, either $G_{T,\omega}$ or $G_\omega$ corresponds to a slice $\Lambda_{\alpha,\varepsilon,T}(t)$.

     The novelties of this paper are as follows:
     \begin{itemize}
      \item  Our assumption $(H_4)$ is more natural and weaker than the above-mentioned $\widehat{(H)}$.
      Since $(H_4)$ cannot ensure the invertibility of the Gramian operators  $G_{T,\omega}$ and $G_\omega$ (given by
\eqref{1.11,1-7} and \eqref{1.12,1-7} respectively),  the method to design feedback laws in \cite{Komornik-1997, Urquiza, Vest} does not work for our case.

      \item  Our method to design feedback laws seems to be new from two perspective as follows:
            First, we replace
      the  generalized Gramian operator (in \cite{Komornik-1997, Urquiza, Vest}) with the integration of
      a mutated Gramian operator-valued
      function.
       It deserves mentioning that though  each slice $\Lambda_{\alpha,\varepsilon,T}(t)$  is invertible (see Lemma \ref{yu-12-3-10}),
       it does not work to replace
       $\Pi_{\alpha,\varepsilon,T}$
       by one slice $\Lambda_{\alpha,\varepsilon,T}(t)$ in \eqref{1.9wang}.
       Second, we use all information of the weak observability inequality. %(This has been explained in
%      the note $(iii)$ of Remark \ref{remark1.5,1.7}.)

    \item From perspective of stability, our design for feedback laws is reasonable in the sense:
        When
    $S(\cdot)$ is  stable, i.e., for some $\omega>0$ and $\widehat{C}(\omega)>0$, $\|S(t)\|_{\mathcal{L}(H)}\leq \widehat{C}(\omega)e^{-\omega t}$ for all $t\in\mathbb{R}^+$,
    the feedback law  should be $0$. This is consistent with our design. Indeed, in this case, we have
    $(H_4)$ with $\alpha=2\omega$, $C_1(\alpha)=0$ and $C_2(\alpha)=(\widehat{C}(\omega))^2$),
    which, along with  \eqref{1.9wang}, gives $K_T=0$.

        %{\color{blue}(Here, we note that, in this simple case, the decay rate given in Theorem \ref{yu-theorem-12-3-1} is $\omega-T^{-1}\ln C(\omega)$ not $\omega$ which is different if $C(\omega)\neq 1$. In our opinion, it is natural. Indeed, if we consider the control system in the finite-dimensional setting, i.e., $A,B$ are two appropriate matrices. Taking $\eta^*\in \sigma(A)$ so that $\mbox{Re}(\eta^*)\geq \mbox{Re}(\eta)$ for any $\eta\in \sigma(A)$. When $\mbox{Re}(\eta^*)<0$ and the geometric multiplicity of $\eta^*$ greater than $1$, we can conclude that, for each $\varepsilon\in(0,-\mbox{Re}(\eta^*))$, there is a $C(\varepsilon)\geq 1$ so that
%        $\|S(t)\|_{\mathcal{L}(H)}\leq C(\varepsilon)e^{-(-\mbox{Re}(\eta^*)+\varepsilon)t}$ for each $t\in\mathbb{R}^+$. In other words, when the geometric multiplicity of $\eta^*$ greater than $1$, the decay rate of $S(\cdot)$ has an error in itself.
%        The similar error will appear in any estimate of the decay rate when the feedback law is  a non-zero operator.)}

\end{itemize}

\subsection{Plan of this paper}

The rest of the paper is organized as follows:  Section \ref{yu-pre-12-23-1} shows some preliminaries;  Section \ref{yu-pro-12-23-1} proves the main result;  Section \ref{section411} presents  further studies;  Section \ref{sec-12-5} is appendix.

\section{Preliminaries}\label{yu-pre-12-23-1}
In this section, we suppose that $(H_1)$-$(H_4)$ hold and let $\alpha>0$, $C_1(\alpha)\geq 0$ and $C_2(\alpha)\geq 1$ be given in  $(H_4)$.
\begin{lemma}\label{yu-lemma-12-3-2}
    Given $T>0$ and $\varepsilon\in[0,\alpha)$. Let the operators
$\Lambda_{\alpha,\varepsilon,T}(t)$  (with $t\in [0,T]$) and $\Pi_{\alpha,\varepsilon,T}$, be defined by
(\ref{yu-12-3-1}) and (\ref{yu-11-12-3}) respectively. Then,  the following statements hold:
\begin{enumerate}
\item[(i)]
There is $C_0(T)>0$ so that   $\|\Lambda_{\alpha,\varepsilon,T}(t)\|_{\mathcal{L}(H';H)}\leq C_0(T)$ for all $t\in[0,T]$;

  \item [(ii)] The operator $\Pi_{\alpha,\varepsilon,T}$ is bounded;
  \item [(iii)] Both $\Lambda_{\alpha,\varepsilon,T}(t)$  and $\Pi_{\alpha,\varepsilon,T}$ are invertible.  Moreover,
\begin{equation}\label{yu-12-3-10}
    \langle \Lambda_{\alpha,\varepsilon,T}(t)\varphi,\varphi\rangle_{H,H'}
    \geq e^{\varepsilon t}\|\varphi\|_{H'}^2\;\;\mbox{and}\;\;\langle \Pi_{\alpha,\varepsilon,T}\varphi,\varphi\rangle_{H,H'}\geq T\|\varphi\|_{H'}^2\;\;\mbox{for all}\;\;
    \varphi\in H'.
\end{equation}

\end{enumerate}
\end{lemma}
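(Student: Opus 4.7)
The plan is to handle parts (i) and (ii) by standard admissibility and Cauchy--Schwarz estimates, and then to establish the coercivity in part (iii) by applying the weak observability inequality $(H_4)$ to a time-reversed state, after which invertibility follows from Lax--Milgram.

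For (i), I would bound the two summands in \eqref{yu-12-3-1} separately. For the integral term, Cauchy--Schwarz in $U$ together with $(a_{32})$ of Remark \ref{yu-remark-12-2-1} --- which provides a continuous map $\widetilde{B^*S^*}(\cdot)\colon H'\to L^2(-T,T;U')$ --- yields a bound independent of $t\in[0,T]$. For the second term, the hypothesis that $A$ generates a $C_0$-group implies $S^*(-t)$ is uniformly bounded in $\mathcal{L}(H')$ on the compact interval $[0,T]$, so a further Cauchy--Schwarz applied to $\langle J_2S^*(-t)\varphi,S^*(-t)\psi\rangle_{H,H'}$ (with $J_2$ an isometric isomorphism) produces the desired uniform estimate. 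Part (ii) follows immediately by integrating (i) over $[0,T]$.

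The core of the lemma is the coercivity in (iii). I would apply the weak observability inequality in $(H_4)$ to $S^*(-t)\varphi$, which lies in $D(A^*)$ whenever $\varphi\in D(A^*)$ thanks to $(H_1)$, at the time parameter $t$. A change of variable $s\mapsto t-s$ then produces
\begin{equation*}
\|\varphi\|_{H'}^2\leq D(\alpha)\int_0^t\|B^*S^*(-s)\varphi\|_{U'}^2\,ds+C(\alpha)e^{-\alpha t}\|S^*(-t)\varphi\|_{H'}^2.
\end{equation*}
Multiplying through by $e^{\varepsilon t}$ and comparing weights via the elementary estimate $e^{\varepsilon t}\leq e^{\alpha T}e^{-(\alpha-\varepsilon)s}$ for $s\in[0,t]\subset[0,T]$ (which is valid because $\alpha-\varepsilon>0$ and $\alpha T-(\alpha-\varepsilon)s\geq \varepsilon T\geq \varepsilon t$), the right-hand side is dominated by $\langle\Lambda_{\alpha,\varepsilon,T}(t)\varphi,\varphi\rangle_{H,H'}$, which gives the first coercivity inequality in \eqref{yu-12-3-10} on $D(A^*)$. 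Density of $D(A^*)$ in $H'$, combined with continuity of $\widetilde{B^*S^*}(\cdot)$ and $S^*(-t)$, extends this estimate to all $\varphi\in H'$. Integrating over $[0,T]$ and using $e^{\varepsilon t}\geq 1$ then yields the second estimate in \eqref{yu-12-3-10}.

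Finally, I would obtain invertibility by viewing $\Lambda_{\alpha,\varepsilon,T}(t)$ and $\Pi_{\alpha,\varepsilon,T}$ as the representing operators of bounded symmetric bilinear forms on $H'$ --- symmetry being immediate from the defining formulas together with the symmetry of the inner products produced by the Riesz isomorphisms $J_1$ and $J_2$ --- so that the coercivity in \eqref{yu-12-3-10} permits an application of Lax--Milgram on the Hilbert space $H'$, yielding bijectivity onto $H\cong (H')'$. The main technical point I anticipate is the time-reversal step in (iii), where the $C_0$-group hypothesis $(H_1)$ is essential both to define $S^*(-t)$ and to guarantee $S^*(-t)D(A^*)\subseteq D(A^*)$; this is precisely the reason a group rather than a mere semigroup is needed, as already flagged in $(a_1)$ of Remark \ref{yu-remark-12-2-1}.
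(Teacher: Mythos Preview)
Your proposal is correct and follows essentially the same route as the paper: the paper also bounds the two summands of $\Lambda_{\alpha,\varepsilon,T}(t)$ via admissibility and the uniform bound on $S^*(-\cdot)$, integrates to obtain (ii), and for (iii) applies $(H_4)$ to $S^*(-t)\varphi$ (using that $S(\cdot)$ is a group), performs the same weight comparison $e^{\varepsilon t}\leq e^{\alpha T}e^{-(\alpha-\varepsilon)s}$, and extends by density. The only cosmetic difference is that you name Lax--Milgram explicitly for the invertibility, whereas the paper simply infers it from the coercivity estimate \eqref{yu-12-3-10}.
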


\begin{proof}
Arbitrarily fix $T>0$ and $\varepsilon\in[0,\alpha)$. We begin with  proving $(i)$. Arbitrarily fix $\varphi, \psi\in D(A^*)$ and $t\in[0,T]$.
Since  $\widetilde{B^*S^*}(\cdot)=B^*S^*(\cdot)$ on $D(A^*)$ (see the note $(iii_{2})$ in Remark \ref{yu-remark-12-2-1}), it follows from (\ref{yu-12-3-1}) and \eqref{1.3,1-12}
 that
\begin{eqnarray*}\label{yu-12-3-12}
    &\;&|\langle \Lambda_{\alpha,\varepsilon,T}(t)\varphi, \psi\rangle_{H,H'}|\nonumber\\
    &\leq& C_1(\alpha)e^{\alpha T}\Big(\int_0^T\|B^*S^*(-s)\varphi\|_{U'}^2ds\Big)^{\frac{1}{2}}
    \Big(\int_0^T\|B^*S^*(-s)\psi\|^2_{U'}ds\Big)^{\frac{1}{2}}
       +C_2(\alpha)\|S^*(-t)\varphi\|_{H'}\|S^*(-t)\psi\|_{H'}\nonumber\\
    &\leq& \Big(C_1(\alpha)e^{\alpha T}C(T)+C_2(\alpha)\Big(\sup_{s\in [0,T]}\|S^*(-s)\|_{\mathcal{L}(H';H')}\Big)^2\Big)
    \|\varphi\|_{H'}\|\psi\|_{H'}.
\end{eqnarray*}
     This, along with  the density
    of $D(A^*)$ in $H'$, leads to  $(i)$
    with
    $$
    C_0(T):=C_1(\alpha)e^{\alpha T}C(T)+C_2(\alpha)\big(\sup_{s\in [0,T]}\|S^*(-s)\|_{\mathcal{L}(H';H')}\big)^2.
    $$
\par
    To  show $(ii)$, we arbitrarily fix  $\varphi,\psi\in H'$.
     It follows from (\ref{yu-12-3-1}) that the function $t\rightarrow \langle\Lambda_{\alpha, \varepsilon, T}(t)\varphi,\psi\rangle_{H,H'}$, $t\in [0,T]$,  is continuous, so is integrable. This, along with  \eqref{yu-11-12-3} and the property $(i)$ in this lemma, yields
$$
|\langle\Pi_{\alpha,\varepsilon,T}\varphi,\psi\rangle_{H,H'}|\leq TC_0(T)\|\varphi\|_{H'}\|\psi\|_{H'},
$$
    which leads to $(ii)$.
\par
    We now prove $(iii)$. Because $S(\cdot)$ is a group (see  $(H_1)$),
     the inequality (\ref{yu-11-16-6}) (which is true by $(H_4)$) is equivalent to
\begin{equation*}\label{yu-11-4-6-11}
    \|\varphi\|^2_{H'}\leq C_1(\alpha)\int_0^t\|B^*S^*(-s)\varphi\|^2_{U'}ds+C_2(\alpha)e^{-\alpha t}\|S^*(-t)\varphi\|_{H'}^2\;\;\mbox{for any}\;\;t\in\mathbb{R}^+, \; \varphi\in D(A^*).
\end{equation*}
      It follows  that when
    $\varepsilon\in[0,\alpha)$ and $t\in[0,T]$,
\begin{eqnarray*}\label{yu-11-15-3}
    e^{\varepsilon t}\|\varphi\|^2_{H'}\leq C_1(\alpha)e^{\alpha T}
    \int_0^{t}e^{-(\alpha-\varepsilon)s}\|B^*S^*(-s)\varphi\|_{U'}^2ds
    +C_2(\alpha)e^{-(\alpha-\varepsilon)t}\|S^*(-t)\varphi\|_{H'}^2\;\;
    \mbox{for any}\;\;\varphi\in D(A^*),
\end{eqnarray*}
    which, together with (\ref{yu-12-3-1}), yields  the first estimate in (\ref{yu-12-3-10}) with $\varphi\in D(A^*)$. This, along with the density of $D(A^*)$ in $H'$, shows the first estimate in (\ref{yu-12-3-10}) with $\varphi\in H'$.
        Next, the second estimate in  (\ref{yu-12-3-10}) follows from the first one and (\ref{yu-11-12-3}).
        Finally, it follows from (\ref{yu-12-3-10}), the claims $(i)$ and $(ii)$, and the Lax-Milgram theorem that both $\Lambda_{\alpha,\varepsilon,T}(t)$ and $\Pi_{\alpha,\varepsilon,T}$ are invertible.

   Hence, we complete the proof of Lemma \ref{yu-lemma-12-3-2}.
\end{proof}

\begin{proposition}\label{yu-proposition-11-12-1}
Let $T>0$ and $\varepsilon\in[0,\alpha)$. Then the following conclusions are true:
\begin{enumerate}
\item[(i)] Let the operator $\Pi_{\alpha,\varepsilon,T}$ be given by (\ref{yu-11-12-3}). If let $\mathcal{X}:=\Pi_{\alpha,\varepsilon,T}$, then $\mathcal{X}$ is a solution of the following Lyapunov equation:
\begin{eqnarray}\label{yu-11-12-4}
    &\;&\langle \mathcal{X}A^*\varphi,\psi\rangle_{H,H'}
    +\langle \mathcal{X}\varphi,A^*\psi\rangle_{H,H'}
    -TC_1(\alpha)e^{\alpha T}\langle J_1B^*\varphi,B^*\psi\rangle_{U,U'}\nonumber\\
    &=&-(\alpha-\varepsilon)\langle\mathcal{X}\varphi,\psi\rangle_{H,H'}-\langle Q_{\alpha,\varepsilon,T}\varphi,\psi\rangle_{H,H'}\;\;\mbox{for any}\;\;\varphi,\psi\in D(A^*),
\end{eqnarray}
    where the bounded operator $Q_{\alpha,\varepsilon,T}$ is defined by
\begin{eqnarray}\label{yu-11-13-1-b}
    Q_{\alpha,\varepsilon,T}
    :=\Lambda_{\alpha,\varepsilon,T}(T)-C_2(\alpha) J_2,\;\;\mbox{with}\;\;\Lambda_{\alpha,\varepsilon,T}(T)
    \;\mbox{given by}\; (\ref{yu-12-3-1}).
\end{eqnarray}
   \item[(ii)] When  $(\varepsilon, T)$ verifies
     \begin{equation}\label{yu-mmmm-1}
\begin{cases}
    \varepsilon\in(0,\alpha)\;\;\mbox{and}\;\;T\geq \varepsilon^{-1}\ln [C_2(\alpha)], &\mbox{if}\;\;
    C_2(\alpha)>1,\\
    \varepsilon\in[0,\alpha)\;\;\mbox{and}\;\;T>0, &\mbox{if}\;\;C_2(\alpha)=1,
\end{cases}
\end{equation}
     the operator  $Q_{\alpha,\varepsilon,T}$, given by \eqref{yu-11-13-1-b},
           is non-negative in the sense of $\langle Q_{\alpha,\varepsilon,T}\varphi,\varphi\rangle_{H,H'}\geq 0$ for any $\varphi\in H'$.
     \end{enumerate}
\end{proposition}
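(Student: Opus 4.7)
The plan for part (i) is to derive the identity by computing, for sufficiently regular $\varphi,\psi$, the combination $\langle\Lambda_{\alpha,\varepsilon,T}(t)A^*\varphi,\psi\rangle_{H,H'}+\langle\Lambda_{\alpha,\varepsilon,T}(t)\varphi,A^*\psi\rangle_{H,H'}$ and then integrating the resulting identity in $t\in[0,T]$. To justify the manipulations I would work first with $\varphi,\psi\in D((A^*)^2)$, which is dense in $D(A^*)$ in the graph norm: for such data $S^*(-s)\varphi\in D((A^*)^2)$, so $A^*S^*(-s)\varphi\in D(A^*)$, $B^*A^*S^*(-s)\varphi$ makes classical sense, and one has the $U'$-valued differentiation rule $\frac{d}{ds}B^*S^*(-s)\varphi=-B^*S^*(-s)A^*\varphi$, together with the standard identity $\frac{d}{dt}\langle J_2 S^*(-t)\varphi,S^*(-t)\psi\rangle_{H,H'}=-\langle J_2 A^*S^*(-t)\varphi,S^*(-t)\psi\rangle_{H,H'}-\langle J_2 S^*(-t)\varphi,A^*S^*(-t)\psi\rangle_{H,H'}$.

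With these in hand the $A^*$-shifted sum from the integral part of $\Lambda_{\alpha,\varepsilon,T}(t)$ equals $-\frac{d}{ds}\langle J_1 B^*S^*(-s)\varphi,B^*S^*(-s)\psi\rangle_{U,U'}$ weighted by $D(\alpha)e^{\alpha T}e^{-(\alpha-\varepsilon)s}$, so one integration by parts in $s$ produces a $t$-independent boundary term at $s=0$, a boundary term at $s=t$, and a recovery term of size $-(\alpha-\varepsilon)$ times the original integral; analogously, the $A^*$-shifted point part becomes $-C(\alpha)\frac{d}{dt}[e^{-(\alpha-\varepsilon)t}\langle J_2 S^*(-t)\varphi,S^*(-t)\psi\rangle_{H,H'}]$ plus an $(\alpha-\varepsilon)$-multiple of the point part. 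The two $(\alpha-\varepsilon)$-pieces reassemble, via (\ref{yu-12-3-1}), into $-(\alpha-\varepsilon)\langle\Lambda_{\alpha,\varepsilon,T}(t)\varphi,\psi\rangle_{H,H'}$. Integrating in $t\in[0,T]$, the $t$-independent boundary contributes the $T$-times $\langle J_1 B^*\varphi,B^*\psi\rangle_{U,U'}$ term on the LHS of (\ref{yu-11-12-4}); the $s=t$ boundary together with the endpoint values of the total $t$-derivative collapse, via (\ref{yu-11-13-1-b}), into $C(\alpha)\langle J_2\varphi,\psi\rangle_{H,H'}-\langle\Lambda_{\alpha,\varepsilon,T}(T)\varphi,\psi\rangle_{H,H'}=-\langle Q_{\alpha,\varepsilon,T}\varphi,\psi\rangle_{H,H'}$. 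Finally, density of $D((A^*)^2)$ in $D(A^*)$ plus continuity of $A^*$, $B^*$ and of the operators $\Pi_{\alpha,\varepsilon,T},\Lambda_{\alpha,\varepsilon,T}(T)\in\mathcal{L}(H';H)$ extend (\ref{yu-11-12-4}) to all $\varphi,\psi\in D(A^*)$.

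Part (ii) is then immediate from Lemma \ref{yu-lemma-12-3-2}(iii): the first estimate at $t=T$ gives $\langle\Lambda_{\alpha,\varepsilon,T}(T)\varphi,\varphi\rangle_{H,H'}\geq e^{\varepsilon T}\|\varphi\|_{H'}^2$ for every $\varphi\in H'$; combined with $\langle J_2\varphi,\varphi\rangle_{H,H'}=\|\varphi\|_{H'}^2$ and (\ref{yu-11-13-1-b}) this yields $\langle Q_{\alpha,\varepsilon,T}\varphi,\varphi\rangle_{H,H'}\geq(e^{\varepsilon T}-C(\alpha))\|\varphi\|_{H'}^2$, and condition (\ref{yu-mmmm-1}) is precisely what forces $e^{\varepsilon T}\geq C(\alpha)$ (trivially when $C(\alpha)=1$; via $T\geq\varepsilon^{-1}\ln[C(\alpha)]$ otherwise).

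The main obstacle is the differentiation step in (i): since $B$ is unbounded, $B^*$ is only defined on $D(A^*)$, so $\frac{d}{ds}B^*S^*(-s)\varphi$ requires $A^*\varphi\in D(A^*)$; this forces the initial restriction to $D((A^*)^2)$ and then a careful density argument, in which one must verify term by term that every piece of (\ref{yu-11-12-4}) is continuous in the $D(A^*)$-norm of $\varphi,\psi$, using $B^*\in\mathcal{L}(D(A^*);U')$ and the boundedness of $\Lambda_{\alpha,\varepsilon,T}(T)$ and $\Pi_{\alpha,\varepsilon,T}$ from $H'$ to $H$.
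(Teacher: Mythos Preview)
Your proposal is correct and follows essentially the same route as the paper. The only cosmetic difference is that the paper packages the exponential weight into a shifted generator $\widehat{A}_{\alpha,\varepsilon}:=A^*+\tfrac12(\alpha-\varepsilon)I$ with group $\widehat{S}_{\alpha,\varepsilon}(\cdot)$, so that $e^{-(\alpha-\varepsilon)s}\langle J_1B^*S^*(-s)\varphi,B^*S^*(-s)\psi\rangle=\langle J_1B^*\widehat{S}_{\alpha,\varepsilon}(-s)\varphi,B^*\widehat{S}_{\alpha,\varepsilon}(-s)\psi\rangle$ and the integration by parts in $s$ is replaced by a single fundamental-theorem evaluation; you instead keep the weight explicit and integrate by parts, which produces exactly the same boundary and $(\alpha-\varepsilon)$ terms. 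Both arguments restrict first to $\varphi,\psi\in D((A^*)^2)$ (the paper justifies the $U'$-differentiability via the factorization $B^*=E^*(\lambda I+A)^*$ from Remark~\ref{yu-remark-12-2-1}$(a_2)$) and then extend by density, and part~(ii) is handled identically via Lemma~\ref{yu-lemma-12-3-2}(iii).
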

\begin{proof}
We begin with showing $(i)$.
     Let
     \begin{equation}\label{2.5,1-9}
     \widehat{A}_{\alpha,\varepsilon}:=A^*+\frac{1}{2}(\alpha-\varepsilon)I,\;\;\mbox{with}
     \;\;D(\widehat{A}_{\alpha,\varepsilon})=D(A^*).
     \end{equation}
          Write
        $\widehat{S}_{\alpha,\varepsilon}(\cdot)$ for the $C_0$-group
    generated by $\widehat{A}_{\alpha,\varepsilon}$.
    Two observations are given in order: First,
    since
$\widetilde{B^*S^*}(\cdot)=B^*S^*(\cdot)$ on $D(A^*)$ (see the note $(iii_{2})$ in Remark \ref{yu-remark-12-2-1}),
    it follows by  (\ref{yu-12-3-1})  that
\begin{eqnarray}\label{yu-12-3-12}
    \langle\Lambda_{\alpha,\varepsilon,T}(t)\varphi,\psi\rangle_{H,H'}
    &=&C_1(\alpha)e^{\alpha T}\int_0^t\langle J_1B^*\widehat{S}_{\alpha,\varepsilon}(-s)\varphi,
    B^*\widehat{S}_{\alpha,\varepsilon}(-s)\psi\rangle_{U,U'}\nonumber\\
    &\;&+C_2(\alpha)\langle J_2\widehat{S}_{\alpha,\varepsilon}(-t)\varphi,
    \widehat{S}_{\alpha,\varepsilon}(-t)\psi\rangle_{H,H'},\;
    \varphi,\psi\in D(A^*).
\end{eqnarray}
   Second, by the note $(ii)$ of Remark \ref{yu-remark-12-2-1} and the first observation above, we see that
   when  $\varphi,\psi\in D((A^*)^2)$ and $s\in[0,T]$,
$$
    \langle J_1B^*\widehat{S}_{\alpha,\varepsilon}(-s)\varphi,B^*
    \widehat{S}_{\alpha,\varepsilon}(-s)\psi\rangle_{H,H'}
    =\langle J_1E^*\widehat{S}_{\alpha,\varepsilon}(-s)(\lambda I+A)^*\varphi, E^*
    \widehat{S}_{\alpha,\varepsilon}(-s)(\lambda I+A)^*\psi\rangle_{H,H'},
$$
from which, it follows that for any $\varphi,\psi\in D((A^*)^2)$, the function $s\rightarrow \langle J_1B^*\widehat{S}_{\alpha,\varepsilon}(-s)\varphi,B^*
\widehat{S}_{\alpha,\varepsilon}(-s)\psi\rangle_{H,H'}$  is continuously differentiable over $[0,T]$.
\par
    We now arbitrarily fix $\varphi,\psi\in D((A^*)^2)$. On the one hand, by the second observation above, we find
    \begin{eqnarray*}\label{yu-11-4-6-1}
    &\;&C_1(\alpha)e^{\alpha T}\int_0^t\frac{d}{ds}\left(\langle J_1B^*
    \widehat{S}_{\alpha,\varepsilon}(-s)\varphi, B^*
    \widehat{S}_{\alpha,\varepsilon}(-s)\psi\rangle_{U,U'}\right)ds\nonumber\\
    &=&C_1(\alpha)e^{\alpha T}\left(\langle J_1B^*\widehat{S}_{\alpha,\varepsilon}(-t)\varphi, B^*\widehat{S}_{\alpha,\varepsilon}(-t)\psi\rangle_{U,U'}-\langle J_1B^*\varphi,B^*\psi\rangle_{U,U'}\right),\; t\in[0,T].
\end{eqnarray*}
  On the other hand, it follows  from (\ref{yu-12-3-12}) that for each $t\in[0,T]$,
\begin{eqnarray*}\label{yu-11-12-5}
    &\;&C_1(\alpha)e^{\alpha T}\int_0^t\frac{d}{ds}\left(\langle J_1 B^*
    \widehat{S}_{\alpha,\varepsilon}(-s)\varphi, B^*
    \widehat{S}_{\alpha,\varepsilon}(-s)\psi\rangle_{U,U'}\right)ds\nonumber\\
    &=&-C_1(\alpha)e^{\alpha T}\biggl(\int_0^t\langle J_1B^*
    \widehat{S}_{\alpha,\varepsilon}(-s)\widehat{A}_{\alpha,\varepsilon}\varphi, B^*
    \widehat{S}_{\alpha,\varepsilon}(-s)\psi\rangle_{U,U'}ds\nonumber\\
    &\;&
   +\int_0^t\langle J_1 B^*\widehat{S}_{\alpha,\varepsilon}(-s)\varphi, B^*
   \widehat{S}_{\alpha,\varepsilon}(-s)
   \widehat{A}_{\alpha,\varepsilon}\psi\rangle_{U,U'}ds\biggl)\nonumber\\
    &=&-\langle \Lambda_{\alpha,\varepsilon,T}(t)
    \widehat{A}_{\alpha,\varepsilon}\varphi,\psi\rangle_{H,H'}
    -\langle \Lambda_{\alpha,\varepsilon,T}(t)\varphi,
    \widehat{A}_{\alpha,\varepsilon}\psi\rangle_{H,H'}\nonumber\\
    &\;&+C_2(\alpha)\left(\langle J_2\widehat{A}_{\alpha,\varepsilon}
    \widehat{S}_{\alpha,\varepsilon}(-t)\varphi,
    \widehat{S}_{\alpha,\varepsilon}(-t)\psi\rangle_{H,H'}+\langle J_2
    \widehat{S}_{\alpha,\varepsilon}(-t)\varphi,
    \widehat{A}_{\alpha,\varepsilon}
    \widehat{S}_{\alpha,\varepsilon}(-t)\psi\rangle_{H,H'}\right)\nonumber\\
    &=&-\langle \Lambda_{\alpha,\varepsilon,T}(t)
    \widehat{A}_{\alpha,\varepsilon}\varphi,\psi\rangle_{H,H'}-\langle \Lambda_{\alpha,\varepsilon,T}(t)\varphi,\widehat{A}_{\alpha,\varepsilon}\psi\rangle_{H,H'}
    \nonumber\\
    &\;&
    -C_2(\alpha)\left[\frac{d}{ds}
    \left(\langle J_2\widehat{S}_{\alpha,\varepsilon}(-s)\varphi,
    \widehat{S}_{\alpha,\varepsilon}(-s)\psi
    \rangle_{H,H'}\right)\right]_{s=t}.
\end{eqnarray*}
     The above two equalities imply that for each $t\in[0,T]$,
\begin{eqnarray*}\label{yu-11-12-6}
    &\;&C_1(\alpha)e^{\alpha T}\left(\langle J_1 B^*\widehat{S}_{\alpha,\varepsilon}(-t)\varphi, B^*\widehat{S}_{\alpha,\varepsilon}(-t)\psi\rangle_{U,U'}-\langle J_1 B^*\varphi,B^*\psi\rangle_{U,U'}\right)\nonumber\\
    &=&-\langle \Lambda_{\alpha,\varepsilon,T}(t)\widehat{A}_{\alpha,\varepsilon}
    \varphi,\psi\rangle_{H,H'}-\langle \Lambda_{\alpha,\varepsilon,T}(t)\varphi,
    \widehat{A}_{\alpha,\varepsilon}\psi\rangle_{H,H'}\nonumber\\
    &\;&-C_2(\alpha)\left[\frac{d}{ds}\left(\langle J_2
    \widehat{S}_{\alpha,\varepsilon}(-s)\varphi,\widehat{S}_{\alpha,\varepsilon}(-s)\psi
    \rangle_{H,H'}\right)\right]_{s=t}.
\end{eqnarray*}
    Integrating the above equality with respect to $t$ over $[0,T]$ and using (\ref{yu-11-12-3}), we obtain
\begin{eqnarray}\label{yu-11-12-7}
    &\;&\langle \Pi_{\alpha,\varepsilon,T}
    \widehat{A}_{\alpha,\varepsilon}\varphi,\psi\rangle_{H,H'}+\langle \Pi_{\alpha,\varepsilon,T}\varphi,\widehat{A}_{\alpha,\varepsilon}\psi\rangle_{H,H'}-T
    C_1(\alpha)e^{\alpha T}\langle J_1B^*\varphi,B^*\psi\rangle_{U,U'}\nonumber\\
    &=& -C_1(\alpha)e^{\alpha T}\int_0^T\langle J_1 B^*
    \widehat{S}_{\alpha,\varepsilon}(-t)\varphi,
    B^*\widehat{S}_{\alpha,\varepsilon}(-t)\psi\rangle_{U,U'}dt\nonumber\\
    &\;&-C_2(\alpha)\int_0^T\frac{d}{dt}\left(\langle J_2\widehat{S}_{\alpha,\varepsilon}(-t)\varphi,
    \widehat{S}_{\alpha,\varepsilon}(-t)\psi\rangle_{H,H'}\right)dt.
\end{eqnarray}
        Meanwhile, it follows from (\ref{yu-12-3-12})  that
\begin{eqnarray*}\label{yu-11-12-8}
    &\;&C_2(\alpha)\int_0^T\frac{d}{dt}\left(\langle J_2
    \widehat{S}_{\alpha,\varepsilon}(-t)\varphi,
    \widehat{S}_{\alpha,\varepsilon}(-t)\psi\rangle_{H,H'}\right)dt
    \nonumber\\
    &=&C_2(\alpha)\langle J_2\widehat{S}_{\alpha,\varepsilon}(-T)\varphi,
    \widehat{S}_{\alpha,\varepsilon}(-T)\psi\rangle_{H,H'}-C_2(\alpha)\langle J_2\varphi,\psi\rangle_{H,H'}\nonumber\\
    &=& \langle \Lambda_{\alpha,\varepsilon,T}(T)\varphi,\psi\rangle_{H,H'} -C_1(\alpha)e^{\alpha T}\int_0^T\langle J_1B^*\widehat{S}_{\alpha,\varepsilon}(-t)\varphi, B^*\widehat{S}_{\alpha,\varepsilon}(-t)\psi\rangle_{U,U'}dt-C_2(\alpha)\langle J_2\varphi,\psi\rangle_{H,H'}.
\end{eqnarray*}
    Replacing the above equality to (\ref{yu-11-12-7}), we get
\begin{eqnarray*}\label{yu-11-12-9}
   &\;& \langle \Pi_{\alpha,\varepsilon,T}
   \widehat{A}_{\alpha,\varepsilon}\varphi,\psi\rangle_{H,H'}+\langle \Pi_{\alpha,\varepsilon,T}\varphi,\widehat{A}_{\alpha,\varepsilon}\psi\rangle_{H,H'}-T
    C_1(\alpha) e^{\alpha T}\langle J_1B^*\varphi,B^*\psi\rangle_{U,U'}\nonumber\\
   &=& C_2(\alpha)\langle J_2\varphi,\psi\rangle_{H,H'}
    -\langle \Lambda_{\alpha,\varepsilon,T}(T)\varphi, \psi\rangle_{H,H'},
\end{eqnarray*}
    which, together with \eqref{2.5,1-9}
     and (\ref{yu-11-13-1-b}), shows that  $\Pi_{\alpha,\varepsilon,T}$ verifies the equation (\ref{yu-11-12-4}), with $\varphi,\psi\in D((A^*)^2)$.
      This, along with  the density of $D((A^*)^2)$ in $D(A^*)$, shows
      that
    $\mathcal{X}:=\Pi_{\alpha,\varepsilon,T}$ is a solution of the equation (\ref{yu-11-12-4}).
\par
  We next show $(ii)$. Indeed, in the case that $C_2(\alpha)>1$, we see from (\ref{yu-12-3-10}) that  for any
    $\varepsilon\in(0,\alpha)$ and  $T\geq \varepsilon^{-1}\ln [C_2(\alpha)]$,
\begin{equation*}\label{yu-11-15-100}
    \langle Q_{\alpha,\varepsilon,T}\xi,\xi\rangle_{H,H'}=\langle \Lambda_{\alpha,\varepsilon,T}(T)\xi, \xi\rangle_{H,H'}-C_2(\alpha)\|\xi\|^2_{H'}\geq (e^{\varepsilon T}-C_2(\alpha))\|\xi\|_{H'}^2\geq 0,\;\;\xi\in H',
\end{equation*}
  while in the case that $C_2(\alpha)=1$, we use (\ref{yu-12-3-10}) to get that for any $\varepsilon\in[0,\alpha)$ and
    $T>0$,
\begin{equation*}\label{yu-11-15-100-bb}
    \langle Q_{\alpha,\varepsilon,T}\xi,\xi\rangle_{H,H'}=\langle \Lambda_{\alpha,\varepsilon,T}(T)\xi, \xi\rangle_{H,H'}-\|\xi\|^2_{H'}\geq  0,\;\;\xi\in H'.
\end{equation*}
   These imply that  $Q_{\alpha,\varepsilon,T}$,
   with  $(\varepsilon,T)$ satisfying \eqref{yu-mmmm-1},
    is non-negative.

    Thus we finish the proof of  Proposition \ref{yu-proposition-11-12-1}.
\end{proof}

\begin{remark}\label{reamrk2.3w}
First, in the proof of Proposition \ref{yu-proposition-11-12-1}, we used the weak observability inequality in $(H_4)$. Second, in the proof of Theorem \ref{yu-theorem-12-3-1}, Proposition \ref{yu-proposition-11-12-1} plays an important  role. Third, in the proof of Theorem \ref{yu-theorem-12-3-1},  we also borrowed  another idea,
  which was widely used in the related works (see, for instance,
\cite{Flandoli-Lasiecka-Triggiani, Flandoli, Vest}) and can be  explained  as follows:  By Proposition \ref{yu-proposition-11-12-1}, $\Pi_{\alpha,\varepsilon,T}$ satisfies
    the Laypunov equation (\ref{yu-11-12-4}), which  can be written formally as
\begin{equation*}\label{yu-11-22-1}
    \Pi_{\alpha,\varepsilon,T}^{-1}\left(A-TC_1(\alpha)e^{\alpha T}BJ_1B^*\Pi_{\alpha,T}^{-1}\right)
    \Pi_{\alpha,\varepsilon,T}=-A^*-P_{\alpha,\varepsilon,T},
\end{equation*}
where
\begin{equation}\label{yu0-bb-1}
    P_{\alpha,\varepsilon,T}:=(\alpha-\varepsilon)I+\Pi_{\alpha,\varepsilon,T}^{-1}
    Q_{\alpha,\varepsilon,T}.
\end{equation}
(The existence of $\Pi^{-1}_{\alpha,\varepsilon,T}$ is ensured by $(iii)$ of Lemma \ref{yu-lemma-12-3-2}.) In this sense, the operators $-A^*-P_{\alpha,\varepsilon,T}$ and $A-TC_1(\alpha,T)BJ_1B^*\Pi_{\alpha,T}^{-1}$
     are ``conjugated'' each other. Thus one can obtain a $C_0$-group on $H$ generated by $A-TC_1(\alpha)e^{\alpha T}BJ_1B^*\Pi_{\alpha,T}^{-1}$ formally, through using the $C_0$-group on $H'$ generated by $-A^*-P_{\alpha,\varepsilon,T}$.

    Now back to our case. Write  $\mathcal{V}_{\alpha,\varepsilon,T}(\cdot)$ for the
    $C_0$-group on $H'$, generated by $\Delta_{\alpha,\varepsilon,T}:=-A^*-P_{\alpha,\varepsilon,T}$, with its domain $D(\Delta_{\alpha,\varepsilon,T})$ which is the same as $D(A^*)$. Here, we notice that
    $P_{\alpha,\varepsilon,T}\in\mathcal{L}(H')$.
     Then by the constant variation formula, we have
     \begin{equation}\label{yu-11-23-1}
    \mathcal{V}_{\alpha,\varepsilon,T}(t)\varphi
    =S^*(-t)\varphi-\int_0^tS^*(s-t)P_{\alpha,\varepsilon,T}
    \mathcal{V}_{\alpha,\varepsilon,T}(s)\varphi ds\;\;\mbox{for any}\;\;t\in\mathbb{R}, \; \varphi\in H'.
\end{equation}

\end{remark}

%\par
%    In the rest of this subsection, we shall present the proof of Theorem
%    \ref{yu-11-theorem-29-1}. The main idea for its proof comes originally from
%     \cite{Flandoli} for differential Riccati equation. {\color{blue}In \cite{Vest}, the author gives a
%     proof for algebraic Riccati equation. Unfortunately,  there are many misprints in the proof of \cite{Vest}. Moreover, in \cite{Vest}, the author only checked the exponential decay of the corresponding closed system, has not check all conditions in $(i)$ of Definition \ref{yu-definition-11-17-1}.} Therefore, we give the complete proof in our paper.
%     Indeed, the different between our proof and \cite{Vest} can be stated as the following four aspects:
%\begin{itemize}
%  \item We added a new lemma  to replace Proposition 2.3 of \cite{Vest} (see Lemma \ref{yu-lemma-11-23-2} blew);
%  \item We rewritten the proof of Lemma 3.2 of \cite{Vest} based on our new lemma (i.e., Lemma \ref{yu-lemma-11-23-2} below);
%  \item
%      We revised
%     the proofs of Proposition 4.1 and 4.2 of \cite{Vest} (see Steps 3, 4 in the proof of Theorem \ref{yu-11-theorem-29-1});
%  \item We checked the exponential stabilizability in the the sense of $(i)$ in Definition \ref{yu-definition-11-17-1} (see Step 5 in the proof of Theorem \ref{yu-11-theorem-29-1}).
%\end{itemize}
\par
    The next  two lemmas will be used  in the proof Theorem \ref{yu-theorem-12-3-1}.
    For the first one, we  did not find any exact version  in  published papers, while for the second one,
           a  similar result  was given in \cite[Lemma 3.3]{Vest}, however,  in its proof, there are some places that we do not understand. So we give their proofs in Section \ref{sec-12-5}.

\begin{lemma}\label{yu-lemma-11-23-2}
Given  $\gamma>0$, $M\in \mathcal{L}(H')$ and $\varphi\in D(A^*)$, let $w(t;\varphi):=\int_0^tS^*(s-t)M\mathcal{V}_{\alpha,\varepsilon,T}(s)\varphi ds$, $t\in[-\gamma,\gamma]$. Then
the following conclusions are true:
 \begin{enumerate}
\item[(i)] For any $t\in[-\gamma,\gamma]$,  $w(t;\varphi)\in D(A^*)$;
\item[(ii)] There is a constant $C(\gamma)>0$ (independent of $M$ and $\varphi$ but depending on $\gamma$) so that
\begin{equation}\label{yu-11-25-bbb-1}
    \int_{-\gamma}^\gamma\|B^*w(t;\varphi)\|^2_{U'}dt\leq C(\gamma)
    \Big(\|M\varphi\|_{H'}^2+
    \int_{-\gamma}^{\gamma}
    \left(\|M\mathcal{V}_{\alpha,\varepsilon,T}(t)\varphi\|_{H'}^2
    +\|M\mathcal{V}_{\alpha,\varepsilon,T}(t)\Delta_{\alpha,\varepsilon,T}\varphi\|_{H'}^2
    \right)dt\Big).
\end{equation}
 \end{enumerate}
\end{lemma}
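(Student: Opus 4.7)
The strategy is to use note $(a_{2})$, which gives a factorization $B^{*}=E^{*}(\lambda I+A)^{*}$ with some $\lambda\in\rho(-A)$ and $E^{*}\in\mathcal{L}(H';U')$; this yields
$$\|B^{*}v\|_{U'}\leq \|E^{*}\|_{\mathcal{L}(H';U')}\bigl(|\lambda|\,\|v\|_{H'}+\|A^{*}v\|_{H'}\bigr),\qquad v\in D(A^{*}).$$
Consequently, \eqref{yu-11-25-bbb-1} will follow once part (i) is established together with bounds on $\int_{-\gamma}^{\gamma}\|w(t;\varphi)\|_{H'}^{2}\,dt$ and $\int_{-\gamma}^{\gamma}\|A^{*}w(t;\varphi)\|_{H'}^{2}\,dt$ by the right-hand side of \eqref{yu-11-25-bbb-1}.

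For part (i), the key observation is that since $\varphi\in D(A^{*})=D(\Delta_{\alpha,\varepsilon,T})$, the map $s\mapsto\mathcal{V}_{\alpha,\varepsilon,T}(s)\varphi$ belongs to $C^{1}(\mathbb{R};H')$ with derivative $\mathcal{V}_{\alpha,\varepsilon,T}(s)\Delta_{\alpha,\varepsilon,T}\varphi$; since $M\in\mathcal{L}(H')$, it follows that $f(s):=M\mathcal{V}_{\alpha,\varepsilon,T}(s)\varphi$ is $C^{1}$ with $f'(s)=M\mathcal{V}_{\alpha,\varepsilon,T}(s)\Delta_{\alpha,\varepsilon,T}\varphi$. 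Rewriting $w(t;\varphi)=\int_{0}^{t}T(t-s)f(s)\,ds$ with $T(r):=S^{*}(-r)$ (a $C_{0}$-group on $H'$ with generator $-A^{*}$, using $(H_{1})$), the standard $C^{1}$-convolution lemma for $C_{0}$-semigroups yields $w(t;\varphi)\in D(A^{*})$ and, upon integrating by parts in $s$,
$$A^{*}w(t;\varphi)=M\mathcal{V}_{\alpha,\varepsilon,T}(t)\varphi-S^{*}(-t)M\varphi-\int_{0}^{t}S^{*}(s-t)M\mathcal{V}_{\alpha,\varepsilon,T}(s)\Delta_{\alpha,\varepsilon,T}\varphi\,ds$$
for every $t\in[-\gamma,\gamma]$.

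For part (ii), set $K(\gamma):=\sup_{|r|\leq 2\gamma}\|S^{*}(r)\|_{\mathcal{L}(H')}$, which is finite by the $C_{0}$-group property. Cauchy--Schwarz applied to the defining integral of $w(t;\varphi)$ yields
$$\|w(t;\varphi)\|_{H'}^{2}\leq |t|\,K(\gamma)^{2}\int_{-\gamma}^{\gamma}\|M\mathcal{V}_{\alpha,\varepsilon,T}(s)\varphi\|_{H'}^{2}\,ds,$$
and the same bound, with $\varphi$ replaced by $\Delta_{\alpha,\varepsilon,T}\varphi$, controls the third term in the formula for $A^{*}w(t;\varphi)$; the first term contributes $\|M\mathcal{V}_{\alpha,\varepsilon,T}(t)\varphi\|_{H'}^{2}$, and the second is bounded by $K(\gamma)^{2}\|M\varphi\|_{H'}^{2}$. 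Using $(a+b+c)^{2}\leq 3(a^{2}+b^{2}+c^{2})$, integrating in $t\in[-\gamma,\gamma]$, and combining with the factorization bound from the first paragraph produces \eqref{yu-11-25-bbb-1} with some constant $C(\gamma)>0$ depending only on $\gamma$.

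The main obstacle I anticipate is justifying the explicit identity for $A^{*}w(t;\varphi)$ uniformly in the sign of $t$: for $t<0$ the integral $\int_{0}^{t}$ is not a standard forward-time convolution, but the invertibility of $S^{*}(\cdot)$ from $(H_{1})$ allows one to reparametrize so that the same $C^{1}$-convolution calculation applies, yielding the identity throughout $[-\gamma,\gamma]$.
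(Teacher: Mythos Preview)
Your proposal is correct and follows essentially the same route as the paper. The paper derives the identical formula
\[
A^{*}w(t;\varphi)=M\mathcal{V}_{\alpha,\varepsilon,T}(t)\varphi-S^{*}(-t)M\varphi-\int_{0}^{t}S^{*}(s-t)M\mathcal{V}_{\alpha,\varepsilon,T}(s)\Delta_{\alpha,\varepsilon,T}\varphi\,ds
\]
by computing the difference quotient $\frac{S^{*}(h)-I}{h}\,w(t;\varphi)$ directly and splitting it into three pieces, whereas you package the same computation as an instance of the standard $C^{1}$-inhomogeneity lemma for $C_{0}$-(semi)groups; part (ii) is then handled in both cases via the factorization $B^{*}=E^{*}(\lambda I+A)^{*}$ from note $(a_{2})$ together with the elementary bounds on the three terms in the formula above. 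Your remark about the sign of $t$ is well taken and the reparametrization you indicate (using that $S^{*}(\cdot)$ is a group by $(H_{1})$) is exactly what is needed; the paper leaves this implicit.
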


\begin{lemma}\label{yu-lemma-11-23-1}
   For any $\varphi, \psi\in D(A^*)$ and $t\in\mathbb{R}$,
\begin{eqnarray}\label{yu-11-23-2}
    \langle \Pi_{\alpha,\varepsilon,T}\varphi,\psi\rangle_{H,H'}
    &=&\langle \Pi_{\alpha,\varepsilon,T}\mathcal{V}_{\alpha,\varepsilon,T}(t)\varphi,S^*(-t)\psi
    \rangle_{H,H'}\nonumber\\
    &\;&+TC_1(\alpha)e^{\alpha T}\int_0^t\langle J_1B^*\mathcal{V}_{\alpha,\varepsilon,T}(s)\varphi,B^*S^*(-s)\psi\rangle_{U,U'}ds.
\end{eqnarray}
    \end{lemma}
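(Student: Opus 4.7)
The plan is to show that the right-hand side of \eqref{yu-11-23-2}, viewed as a function of $t$, is constant and agrees with the left-hand side at $t=0$. Denote by $F(t)$ the right-hand side of \eqref{yu-11-23-2}. Since $\mathcal{V}_{\alpha,\varepsilon,T}(0)=I$, $S^*(0)=I$ and the integral term vanishes at $t=0$, we have $F(0)=\langle \Pi_{\alpha,\varepsilon,T}\varphi,\psi\rangle_{H,H'}$, so it suffices to prove $F'(t)\equiv 0$.

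For $\varphi,\psi\in D(A^*)$, both $\tilde\varphi(t):=\mathcal{V}_{\alpha,\varepsilon,T}(t)\varphi$ and $\tilde\psi(t):=S^*(-t)\psi$ remain in $D(A^*)$ for all $t\in\mathbb{R}$, since $D(\Delta_{\alpha,\varepsilon,T})=D(A^*)$ and $D(A^*)$ is invariant under the $C_0$-group $S^*(\cdot)$. Using boundedness of $\Pi_{\alpha,\varepsilon,T}\in\mathcal{L}(H';H)$, the group law, and $\tfrac{d}{dt}S^*(-t)\psi=-A^*S^*(-t)\psi$, I would differentiate term by term to obtain
\begin{equation*}
F'(t)=\langle \Pi_{\alpha,\varepsilon,T}\Delta_{\alpha,\varepsilon,T}\tilde\varphi(t),\tilde\psi(t)\rangle_{H,H'}
-\langle \Pi_{\alpha,\varepsilon,T}\tilde\varphi(t),A^*\tilde\psi(t)\rangle_{H,H'}
+TD(\alpha)e^{\alpha T}\langle J_1B^*\tilde\varphi(t),B^*\tilde\psi(t)\rangle_{U,U'}.
\end{equation*}

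Next I would substitute $\Delta_{\alpha,\varepsilon,T}=-A^*-P_{\alpha,\varepsilon,T}$, so that the first term splits as $-\langle \Pi_{\alpha,\varepsilon,T}A^*\tilde\varphi(t),\tilde\psi(t)\rangle_{H,H'}-\langle \Pi_{\alpha,\varepsilon,T}P_{\alpha,\varepsilon,T}\tilde\varphi(t),\tilde\psi(t)\rangle_{H,H'}$. Applying the Lyapunov equation \eqref{yu-11-12-4} from Proposition \ref{yu-proposition-11-12-1} (with $\mathcal{X}=\Pi_{\alpha,\varepsilon,T}$) to the test pair $(\tilde\varphi(t),\tilde\psi(t))\in D(A^*)\times D(A^*)$ collapses the $A^*$-terms and the $\langle J_1B^*\cdot,B^*\cdot\rangle_{U,U'}$-term into $(\alpha-\varepsilon)\langle \Pi_{\alpha,\varepsilon,T}\tilde\varphi(t),\tilde\psi(t)\rangle_{H,H'}+\langle Q_{\alpha,\varepsilon,T}\tilde\varphi(t),\tilde\psi(t)\rangle_{H,H'}$. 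Using the identity $\Pi_{\alpha,\varepsilon,T}P_{\alpha,\varepsilon,T}=(\alpha-\varepsilon)\Pi_{\alpha,\varepsilon,T}+Q_{\alpha,\varepsilon,T}$, which is immediate from the definition \eqref{yu0-bb-1} of $P_{\alpha,\varepsilon,T}$, the remaining pieces cancel exactly and I conclude $F'(t)=0$.

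The main obstacle is really one of bookkeeping rather than analysis: I must ensure that differentiation of $t\mapsto \Pi_{\alpha,\varepsilon,T}\tilde\varphi(t)$ in $H$ and of $t\mapsto \tilde\psi(t)$ in $H'$ is legitimate on $D(A^*)$, and that the Lyapunov equation \eqref{yu-11-12-4} can be applied to the moving pair $(\tilde\varphi(t),\tilde\psi(t))$. Both are guaranteed by the invariance of $D(A^*)$ under the two groups and the standard differentiability of $C_0$-groups on their domains. Once that is in place, the cancellation is algebraic and yields \eqref{yu-11-23-2}.
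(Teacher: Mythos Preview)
Your argument is correct. The invariance of $D(A^*)$ under both groups (because $D(\Delta_{\alpha,\varepsilon,T})=D(A^*)$ with equivalent graph norms, $P_{\alpha,\varepsilon,T}$ being bounded) makes $s\mapsto B^*\mathcal{V}_{\alpha,\varepsilon,T}(s)\varphi$ and $s\mapsto B^*S^*(-s)\psi$ continuous in $U'$, so the integral term is $C^1$; the product rule applies to the first term since $\Pi_{\alpha,\varepsilon,T}\in\mathcal{L}(H';H)$; and the algebraic cancellation via \eqref{yu-11-12-4} and $\Pi_{\alpha,\varepsilon,T}P_{\alpha,\varepsilon,T}=(\alpha-\varepsilon)\Pi_{\alpha,\varepsilon,T}+Q_{\alpha,\varepsilon,T}$ is exactly right.

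Your route is genuinely different from, and considerably shorter than, the paper's. The paper first proves the analogue of \eqref{yu-11-23-2} with \emph{both} arguments evolved by $S^*(-\cdot)$ (their identity \eqref{yu-11-15-104}, obtained by integrating the Lyapunov equation along the $S^*$-flow), and then substitutes the variation-of-constants formula \eqref{yu-11-23-1} for $\mathcal{V}_{\alpha,\varepsilon,T}(t)\varphi$ into one slot. This produces four cross-terms $\mathcal{E}_1(t),\dots,\mathcal{E}_4(t)$ which must be shown to cancel; the paper does this through a resolvent regularization $\mathcal{R}_n=n(nI-A^*)^{-1}$ and a limiting argument that occupies most of the proof. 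By differentiating the mixed expression directly, you bypass that entire detour: the term $-\langle \Pi_{\alpha,\varepsilon,T}P_{\alpha,\varepsilon,T}\tilde\varphi,\tilde\psi\rangle$ coming from $\Delta_{\alpha,\varepsilon,T}=-A^*-P_{\alpha,\varepsilon,T}$ cancels on the spot against the remainder from the Lyapunov equation, and no approximation is needed. The paper's approach does yield the auxiliary identity \eqref{yu-11-15-104} along the way, but for the lemma itself your method is both simpler and more transparent.
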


\section{Proof of main theorem}\label{yu-pro-12-23-1}

This section is devoted to prove Theorem \ref{yu-theorem-12-3-1}.

\begin{proof}[Proof of Theorem \ref{yu-theorem-12-3-1}]
    Arbitrarily fix  $(\varepsilon,T)\in[0,\alpha)\times (0,+\infty)$ satisfying \eqref{yu-mmmm-1}.
     Let $\mathcal{V}_{\alpha,\varepsilon,T}(\cdot)$ be the $C_0$-group on $H'$
    generated by $-A^*-P_{\alpha,\varepsilon,T}$, where $P_{\alpha,\varepsilon,T}$ is given by \eqref{yu0-bb-1}. Define
\begin{equation}\label{yu-11-24-24}
    \mathcal{S}_{\alpha,\varepsilon,T}(t):=\Pi_{\alpha,\varepsilon,T}
    \mathcal{V}_{\alpha,\varepsilon,T}(t)\Pi^{-1}_{\alpha,\varepsilon,T},\;\;t\in \mathbb{R}.
\end{equation}
    (The invertibility of  $\Pi_{\alpha,\varepsilon,T}$ is ensured by $(iii)$ of Lemma \ref{yu-lemma-12-3-2}.)
    The rest of the proof is
    organized in several steps.
\vskip 5pt
   \noindent \emph{Step 1. We have the following conclusions:
\begin{enumerate}
  \item [$(a_1)$] The family $\{\mathcal{S}_{\alpha,\varepsilon,T}(t)\}_{t\in \mathbb{R}}$, given by
    \eqref{yu-11-24-24}, is a $C_0$-group on $H$;
  \item [$(a_2)$] The generator of $\mathcal{S}_{\alpha,\varepsilon,T}(\cdot)$ is as:
  $\mathcal{A}_{\alpha,\varepsilon,T}:=\Pi_{\alpha,\varepsilon,T}(-A^*-P_{\alpha,\varepsilon,T}
  )\Pi^{-1}_{\alpha,\varepsilon,T}$,
  with its domain $D(\mathcal{A}_{\alpha,\varepsilon,T})=\Pi_{\alpha,\varepsilon,T}[D(A^*)]$;
  \item [$(a_3)$] For any $t\in \mathbb{R}^+$, $x\in D(\mathcal{A}_{\alpha,\varepsilon,T})$ and $\varphi\in D(A^*)$,
\begin{eqnarray*}\label{yu-11-24-25}
    \langle \mathcal{S}_{\alpha,\varepsilon,T}(t)x,\varphi\rangle_{H,H'}
    =\langle x,S^*(t)\varphi\rangle_{H,H'}
    -TC_1(\alpha)e^{\alpha T}\int_0^t\langle J_1B^*\Pi_{\alpha,\varepsilon,T}^{-1}
    \mathcal{S}_{\alpha,\varepsilon,T}(s)x,B^*S^*(t-s)\psi\rangle_{U,U'}ds.
\end{eqnarray*}
\end{enumerate}}
\par
    These can be proved by  very similar methods used in the proof of  \cite[Theorem 3.2]{Vest}. We omit the proofs.
    % First, since $\mathcal{V}_{\alpha,\varepsilon,T}(\cdot)$ is a $C_0$-group, the claim $(a)$ is obvious. Second, since $\mathcal{S}_{\alpha,\varepsilon,T}(\cdot)$ is the similar semigroup (for its definition, we can refer to \cite[Section 5.10 on Page 43]{Engel-Nagel})
%    of $\mathcal{V}_{\alpha,\varepsilon,T}(\cdot)$, the claim $(b)$ comes from
%    \cite[Section 2.1 on Page 59]{Engel-Nagel}. Third, by using (\ref{yu-11-23-2}) with replacing $\varphi$ and $\psi$ by
%    $\Pi_{\alpha,\varepsilon,T}^{-1}x_0$ and $S^*(t)\varphi$, respectively, we can obtain
%    the equality (\ref{yu-11-24-25}). Here, we note that
%    once $x_0\in D(\mathcal{A}_{\alpha,\varepsilon,T})$, then $\Pi_{\alpha,\varepsilon,T}^{-1}x_0\in D(A^*)$.
\vskip 5pt
   \noindent \emph{Step 2. We have
\begin{equation*}\label{yu-11-26-10}
    (\widetilde{A}-T C_1(\alpha)e^{\alpha T}BJ_1B^*\Pi_{\alpha,\varepsilon,T}^{-1})x
    =\mathcal{A}_{\alpha,\varepsilon,T}x\;\;\mbox{for all}\;\;x\in D(\mathcal{A}_{\alpha,\varepsilon,T}),
\end{equation*}
    where $\widetilde{A}\in \mathcal{L}(H;D(A^*)')$ is the unique extension of $A$,  defined by (\ref{1.3,202211}).}
\par
    The very similar result  has been proved in \cite[Theorem 3.3]{Vest} by using the conclusions in Step 1. Thus, we omit its proof.
\vskip 5pt
   \noindent \emph{Step 3. We prove that for any $\varphi,\psi\in D(A^*)$ and  $t\in\mathbb{R}$,
\begin{eqnarray}\label{yu-11-28-2}
    \langle \Pi_{\alpha,\varepsilon,T}\varphi,\psi\rangle_{H,H'}
     &=& TC_1(\alpha)e^{\alpha T} \int_0^t\langle J_1B^*\mathcal{V}_{\alpha,\varepsilon,T}
    (s)\varphi, B^*\mathcal{V}_{\alpha,\varepsilon,T}(s)\psi\rangle_{U,U'}ds
    \\
    &\;&+\langle \Pi_{\alpha,\varepsilon,T}\mathcal{V}_{\alpha,\varepsilon,T}(t)\varphi,
    \mathcal{V}_{\alpha,\varepsilon,T}(t)\psi\rangle_{H,H'}
       +\int_0^t\langle \mathcal{V}_{\alpha,\varepsilon,T}(s)\varphi,\widehat{Q_{\alpha,\varepsilon,T}}
\mathcal{V}_{\alpha,\varepsilon,T}(s)\psi\rangle_{H',H}ds,\nonumber
\end{eqnarray}
    where $\widehat{Q_{\alpha,\varepsilon,T}}$ is defined by
\begin{equation}\label{yu-11-23-3}
\widehat{Q_{\alpha,\varepsilon,T}}:=
\Pi_{\alpha,\varepsilon,T}P_{\alpha,\varepsilon,T}.
\end{equation}
    }

     First of all, it follows from (\ref{yu-11-23-1}), the assumption $(H_3)$ and Lemma \ref{yu-lemma-11-23-2}
     that
     the first term on the right hand of (\ref{yu-11-28-2}) makes sense.

     We now arbitrarily fix  $\varphi,\psi\in D(A^*)$ and $t\in\mathbb{R}$. Then by Lemma \ref{yu-lemma-11-23-1} and (\ref{yu-11-23-1}),  we have
\begin{eqnarray}\label{yu-11-28-3}
    \langle \Pi_{\alpha,\varepsilon,T}\varphi,\psi\rangle_{H,H'}
    &=&\langle \Pi_{\alpha,\varepsilon,T}\mathcal{V}_{\alpha,\varepsilon,T}(t)\varphi,
    \mathcal{V}_{\alpha,\varepsilon,T}(t)\psi\rangle_{H,H'}\nonumber\\
    &&+TC_1(\alpha)e^{\alpha T}
    \int_0^t\langle J_1B^*\mathcal{V}_{\alpha,\varepsilon,T}(s)\varphi,
    B^*\mathcal{V}_{\alpha,\varepsilon,T}(s)\psi\rangle_{U,U'}ds
    +\mathcal{W}_1(t)+\mathcal{W}_2(t),
\end{eqnarray}
    where
\begin{equation*}\label{yu-11-28-4}
\begin{cases}
    \mathcal{W}_1(t):=\left\langle \Pi_{\alpha,\varepsilon,T}\mathcal{V}_{\alpha,\varepsilon,T}(t)\varphi,
    \int_0^tS^*(s-t)P_{\alpha,\varepsilon,T}
    \mathcal{V}_{\alpha,\varepsilon,T}(s)\psi ds\right\rangle_{H,H'},\\
    \mathcal{W}_2(t):=TC_1(\alpha)e^{\alpha T}
    \int_0^t\left\langle J_1B^*\mathcal{V}_{\alpha,\varepsilon,T}(s)\varphi,
    B^*\int_0^sS^*(\sigma-s)P_{\alpha,\varepsilon,T}\mathcal{V}_{\alpha,\varepsilon,T}
    (\sigma)\psi d\sigma\right\rangle_{U,U'}ds.
\end{cases}
\end{equation*}
    (It follows  by the assumption $(H_3)$, Lemma \ref{yu-lemma-11-23-2} and (\ref{yu-11-23-1})
     that the term $\mathcal{W}_2(t)$ makes sense.)

    Next, we will show
        \begin{equation}\label{yu-11-28-13}
    \mathcal{W}_1(t)=\int_0^t\langle
\mathcal{V}_{\alpha,\varepsilon,T}(s)\varphi,
    \widehat{Q_{\alpha,\varepsilon,T}}\mathcal{V}_{\alpha,\varepsilon,T}(s)
    \psi\rangle_{H',H}ds-\mathcal{W}_2(t).
\end{equation}
    When this is done, \eqref{yu-11-28-2} follows from \eqref{yu-11-28-3}
    and \eqref{yu-11-28-13} at once.

    To show \eqref{yu-11-28-13}, we let $n^*\in\mathbb{N}^+$  so that
    when $n\geq n^*$,
    $nI-A^*$ is invertible.
    We define, for each   $n\geq n^*$,
\begin{equation}\label{yu-kkk-1}
    \mathcal{R}_n:=n(nI-A^*)^{-1}
\end{equation}
and
\begin{equation}\label{yu-11-28-5w}
    \mathcal{K}_n(t):=\Big\langle \Pi_{\alpha,\varepsilon,T}\mathcal{V}_{\alpha,\varepsilon,T}(t)\varphi,
    \int_0^tS^*(s-t)\mathcal{R}_nP_{\alpha,\varepsilon,T}
    \mathcal{V}_{\alpha,\varepsilon,T}(s)\psi ds\Big\rangle_{H,H'}.
\end{equation}
By  \cite[Chapter 1, Theorem 6.3]{Pazy}, we can find  two positive numbers $c_1$ and $c_2$ such that
\begin{equation}\label{yu-11-24-2}
    \|\mathcal{R}_n\|_{\mathcal{L}(H')}\leq \frac{nc_1}{n-c_2}\;\;\mbox{for each}\;\;n\geq n^*,
\end{equation}
   while by \cite[Chapter 1, Lemma 3.2]{Pazy}, we see that when $x\in H'$,
\begin{equation}\label{yu-kkkk-2}
    \mathcal{R}_nx\in D(A^*)\;\;\mbox{and}\;\;
    \mathcal{R}_nx\to x\;\;\mbox{in}\;\; H'\;\;\mbox{as}\;\;
     n\to +\infty.
\end{equation}
    Then by \eqref{yu-11-24-2} and (\ref{yu-kkkk-2}), we can apply
    the dominated convergence theorem in \eqref{yu-11-28-5w} to get
\begin{equation}\label{yu-11-28-6}
    \lim_{n\to+\infty}\mathcal{K}_n(t)=\mathcal{W}_1(t).
\end{equation}
(Here, we used the definition of $\mathcal{W}_1(t)$.)
    Meanwhile, by (\ref{yu-11-23-2}) in Lemma \ref{yu-lemma-11-23-1} (where we replace $\varphi$ and $\psi$ by
    $\mathcal{V}_{\alpha,\varepsilon,T}(s)\varphi$ and $\mathcal{R}_nP_{\alpha,\varepsilon,T}
    \mathcal{V}_{\alpha,\varepsilon,T}(s)\psi$, respectively), we have
\begin{eqnarray}\label{yu-11-28-7}
    \mathcal{K}_n(t)&=&\int_0^t\langle
    \Pi_{\alpha,\varepsilon,T}\mathcal{V}_{\alpha,\varepsilon,T}(t-s)
    \mathcal{V}_{\alpha,\varepsilon,T}(s)\varphi, S^*(s-t)\mathcal{R}_nP_{\alpha,\varepsilon,T}
    \mathcal{V}_{\alpha,\varepsilon,T}(s)\psi\rangle_{H,H'}ds\nonumber\\
    &=&\mathcal{K}_{n,1}(t)+\mathcal{K}_{n,2}(t),
\end{eqnarray}
    where
\begin{equation*}\label{yu-11-28-8}
\begin{cases}
    \mathcal{K}_{n,1}(t):=
    \int_0^t\langle \Pi_{\alpha,\varepsilon,T}\mathcal{V}_{\alpha,\varepsilon,T}(s)\varphi,
    \mathcal{R}_nP_{\alpha,\varepsilon,T}
    \mathcal{V}_{\alpha,\varepsilon,T}(s)\psi\rangle_{H,H'}ds,\\
    \mathcal{K}_{n,2}(t):=-TC_1(\alpha)e^{\alpha T}\int_0^t\int_0^{t-s}
    \langle J_1B^*\mathcal{V}_{\alpha,\varepsilon,T}(\sigma+s)\varphi,
    B^*S^*(-\sigma)\mathcal{R}_nP_{\alpha,\varepsilon,T}
    \mathcal{V}_{\alpha,\varepsilon,T}(s)\psi\rangle_{U,U'}d\sigma ds.
\end{cases}
\end{equation*}
   With respect to $\mathcal{K}_{n,1}(t)$,  we obtain, from \eqref{yu-11-24-2}, (\ref{yu-kkkk-2}), (\ref{yu-11-23-3}) and the dominated convergence theorem,   that
\begin{eqnarray}\label{yu-11-28-9}
    \lim_{n\to+\infty} \mathcal{K}_{n,1}(t)=
    \int_0^t\langle \mathcal{V}_{\alpha,\varepsilon,T}(s)\varphi,
    \widehat{Q_{\alpha,\varepsilon,T}}\mathcal{V}_{\alpha,\varepsilon,T}(s)\psi\rangle_{H',H}ds.
\end{eqnarray}
 With respect to $\mathcal{K}_{n,2}(t)$, we will claim
\begin{equation}\label{yu-11-28-12w}
    \lim_{n\to +\infty}\mathcal{K}_{n,2}(t)=-\mathcal{W}_2(t).
\end{equation}
To this end, it suffices to show
\begin{eqnarray}\label{yu-11-28-10}
    &\;&\int_0^t\int_0^{t-s}
    \langle J_1B^*\mathcal{V}_{\alpha,\varepsilon,T}(\sigma+s)\varphi,
    B^*S^*(-\sigma)\mathcal{R}_nP_{\alpha,\varepsilon,T}
    \mathcal{V}_{\alpha,\varepsilon,T}(s)\psi\rangle_{U,U'}d\sigma ds\nonumber\\
       &=&\int_0^t\langle J_1B^*\mathcal{V}_{\alpha,\varepsilon,T}(\gamma)\varphi,
    B^*\int_0^\gamma S^*(s-\gamma)\mathcal{R}_n P_{\alpha,\varepsilon,T}\mathcal{V}_{\alpha,\varepsilon,T}(s)
    \psi ds\rangle_{U,U'}d\gamma,
\end{eqnarray}
    and
\begin{eqnarray}\label{12-18-27w}
    &&\lim_{n\rightarrow+\infty}\int_{-|t|}^{|t|}\Big\|B^*\int_0^\gamma S^*(s-\gamma)(\mathcal{R}_n-I)P_{\alpha,\varepsilon,T}
    \mathcal{V}_{\alpha,\varepsilon,T}(s)\psi ds\Big\|^2_{U'}d\gamma=0.
\end{eqnarray}
When these have been done, \eqref{yu-11-28-12w} follows from (\ref{yu-11-28-10}),
  \eqref{12-18-27w} and the definitions of $\mathcal{K}_{n,2}(t)$ and $\mathcal{W}_2(t)$
  at once.

To show (\ref{yu-11-28-10}), we first  notice that
by the note $(ii)$ in Remark \ref{yu-remark-12-2-1},
\begin{equation*}
    B^*\mathcal{R}_n=E^*(A+\lambda I)^*(n(nI-A^*)^{-1})=
    nE^*+(n^2+n\bar{\lambda})E^*(nI-A^*)^{-1},\;\;\mbox{when}\;\;n\geq n^*,
\end{equation*}
   (Here $\bar{\lambda}$ is the conjugate of $\lambda$.) which leads to
   \begin{equation}\label{yu-11-24-16w}
    B^*\mathcal{R}_n\in \mathcal{L}(H')\;\;\mbox{for all}\;\; n\geq n^*.
\end{equation}
   Next, since  $\mathcal{R}_n=n\int_0^{+\infty}e^{-nt}S^*(t)dt$ (see the proof Theorem 3.1 in \cite[Chapter 1]{Pazy})), we have $\mathcal{R}_nS^*(\cdot)=S^*(\cdot) \mathcal{R}_n$.
   From this,  \eqref{yu-11-24-16w},
     Lemma \ref{yu-lemma-11-23-2} and
     \cite[Lemma 11.45]{Aliprantis-Border}, we find
\begin{eqnarray*}
    &\;&\int_0^t\int_0^{t-s}
    \langle J_1B^*\mathcal{V}_{\alpha,\varepsilon,T}(\sigma+s)\varphi,
    B^*S^*(-\sigma)\mathcal{R}_nP_{\alpha,\varepsilon,T}
    \mathcal{V}_{\alpha,\varepsilon,T}(s)\psi\rangle_{U,U'}d\sigma ds\nonumber\\
    &=&\int_0^t\int_s^t\langle J_1B^*\mathcal{V}_{\alpha,\varepsilon,T}(\gamma)\varphi,
    B^*S^*(s-\gamma)\mathcal{R}_nP_{\alpha,\varepsilon,T}\mathcal{V}_{\alpha,\varepsilon,T}(s)
    \psi\rangle_{U,U'}d\gamma ds\nonumber\\
    &=&\int_0^t\int_0^\gamma\langle J_1B^*\mathcal{V}_{\alpha,\varepsilon,T}(\gamma)\varphi,
    B^*\mathcal{R}_nS^*(s-\gamma)P_{\alpha,\varepsilon,T}\mathcal{V}_{\alpha,\varepsilon,T}(s)
    \psi\rangle_{U,U'}dsd\gamma\nonumber\\
    &=&\int_0^t\langle J_1B^*\mathcal{V}_{\alpha,\varepsilon,T}(\gamma)\varphi,
    B^*\mathcal{R}_n\int_0^\gamma S^*(s-\gamma)P_{\alpha,\varepsilon,T}\mathcal{V}_{\alpha,\varepsilon,T}(s)
    \psi ds\rangle_{U,U'}d\gamma \nonumber\\
    &=&\int_0^t\langle J_1B^*\mathcal{V}_{\alpha,\varepsilon,T}(\gamma)\varphi,
    B^*\int_0^\gamma S^*(s-\gamma)\mathcal{R}_n P_{\alpha,\varepsilon,T}\mathcal{V}_{\alpha,\varepsilon,T}(s)
    \psi ds\rangle_{U,U'}d\gamma,
\end{eqnarray*}
which leads to (\ref{yu-11-28-10}).

To show \eqref{12-18-27w}, we let
     $z_n(\gamma):=\int_0^\gamma S^*(s-\gamma)(\mathcal{R}_n-I)P_{\alpha,\varepsilon,T}\mathcal{V}_{\alpha,\varepsilon,T}(s)\psi
    ds$,
    then, by Lemma \ref{yu-lemma-11-23-2}, we can find $C(|t|)>0$ such that
\begin{eqnarray*}
    \int_{-|t|}^{|t|}\|B^*z_n(\gamma)\|_{U'}^2d\gamma
    &\leq&C(|t|)\biggl(\|(\mathcal{R}_n-I)P_{\alpha,\varepsilon,T}\psi\|^2_{H'}
    +\int_{-|t|}^{|t|}\|(\mathcal{R}_n-I)P_{\alpha,\varepsilon,T}\mathcal{V}_{\alpha,\varepsilon,T}(s)\psi\|^2_{H'}ds\nonumber\\
    &\;&+\int_{-|t|}^{|t|}
    \|(\mathcal{R}_n-I)P_{\alpha,\varepsilon,T}\mathcal{V}_{\alpha,\varepsilon,T}(s)
    \Delta_{\alpha,\varepsilon,T}\psi\|^2_{H'}\biggl)ds.
\end{eqnarray*}
     This, together with \eqref{yu-11-24-2}, (\ref{yu-kkkk-2}) and the dominated convergence theorem, leads to \eqref{12-18-27w}.

    Finally, \eqref{yu-11-28-13} follows from \eqref{yu-11-28-12w},  (\ref{yu-11-28-6}), (\ref{yu-11-28-7}) and (\ref{yu-11-28-9}) at once. This ends the proof of Step 3.

\vskip 5pt
   \noindent \emph{Step 4. We show that when $x,y\in D(\mathcal{A}_{\alpha,\varepsilon,T})$ and  $t\in\mathbb{R}$,
\begin{eqnarray}\label{yu-11-28-14}
    &\;&\langle x,\Pi_{\alpha,\varepsilon,T}^{-1}y\rangle_{H,H'}\nonumber\\
    &=&
    \langle \mathcal{S}_{\alpha,\varepsilon,T}(t)x,\Pi_{\alpha,\varepsilon,T}^{-1}
    \mathcal{S}_{\alpha,\varepsilon,T}(t)y\rangle_{H,H'}
    +\int_0^t\langle\Pi_{\alpha,\varepsilon,T}^{-1}
    \mathcal{S}_{\alpha,\varepsilon,T}(s)x,\widehat{Q_{\alpha,\varepsilon,T}}
\Pi_{\alpha,\varepsilon,T}^{-1}\mathcal{S}_{\alpha,\varepsilon,
    T}(s)y\rangle_{H',H}ds\nonumber\\
    &\;&+TC_1(\alpha)e^{\alpha T}\int_0^t
    \langle J_1B^*\Pi_{\alpha,\varepsilon,T}^{-1}\mathcal{S}_{\alpha,\varepsilon,T}(s)x,
    B^*\Pi_{\alpha,\varepsilon,T}^{-1}\mathcal{S}_{\alpha,\varepsilon,T}(s)y\rangle_{U,U'}ds.
\end{eqnarray}
}

First of all, the third term on the right hand of (\ref{yu-11-28-14}) makes sense. The reason is as:
it follows  by $(a_2)$ in Step 1 that when
     $z\in D(\mathcal{A}_{\alpha,\varepsilon,T})$, we have $\Pi_{\alpha,\varepsilon,T}^{-1}z\in D(A^*)$. Thus,
     it follows from (\ref{yu-11-24-24}), (\ref{yu-11-23-1}) and Lemma \ref{yu-lemma-11-23-2} that $B^*\Pi_{\alpha,\varepsilon,T}^{-1}\mathcal{S}_{\alpha,\varepsilon,T}(\cdot)
    z=B^*\mathcal{V}_{\alpha,\varepsilon,T}
    (\cdot)\Pi_{\alpha,\varepsilon,T}^{-1}z\in L^2_{loc}(\mathbb{R};U')$.

    Next, we arbitrarily fix  $x, y\in D(\mathcal{A}_{\alpha,\varepsilon,T})$ and $t\in\mathbb{R}$.
    Then by the conclusion $(a_2)$ in Step 1, we find
    $\Pi_{\alpha,\varepsilon,T}^{-1}x, \Pi_{\alpha,\varepsilon,T}^{-1}y\in D(A^*)$.
    This, along with (\ref{yu-11-28-2}) (where $\varphi,\psi$ are replaced by $\Pi_{\alpha,\varepsilon,T}^{-1}x, \Pi_{\alpha,\varepsilon,T}^{-1}y$, respectively)
    and (\ref{yu-11-24-24}), yields (\ref{yu-11-28-14}).
\vskip 5pt
  \noindent  \emph{Step 5. Let
    \begin{equation}\label{12-192.29w}
    \mathcal{S}(\cdot):=\mathcal{S}_{\alpha,\varepsilon,T}(\cdot)\;\;\mbox{and}\;\;K:=-TC_1(\alpha)e^{\alpha T}J_1B^*\Pi_{\alpha,\varepsilon,T}^{-1}.
    \end{equation}
    We show that $K$ is a feedback law stabilizing \eqref{yu-12-2-b-1} with the decay rate $\frac{1}{2}(\alpha-\varepsilon)$.
    }

    First of all, by the conclusions $(a_1)$ and $(a_2)$ in Step 1, we see that $\mathcal{S}(\cdot)$ is a $C_0$-group with the generator:
     \begin{equation}\label{12-192.30w}
     \mathcal{A}:=\mathcal{A}_{\alpha,\varepsilon,T}=\Pi_{\alpha,\varepsilon,T}
  (-A^*-P_{\alpha,\varepsilon,T})\Pi^{-1}_{\alpha,\varepsilon,T},\;\;\mbox{with its domain}\; D(\mathcal{A})=\Pi_{\alpha,\varepsilon,T}[D(A^*)].
     \end{equation}
   It follows by \eqref{12-192.30w},  $(ii)$ in Remark \ref{yu-remark-12-2-1} and the conclusion $(a_2)$ in Step 1
   that for each  $x\in D(\mathcal{A})$,  $\Pi_{\alpha,\varepsilon,T}^{-1}x\in D(A^*)$ and
   that
\begin{eqnarray*}
    \|B^*\Pi_{\alpha,\varepsilon,T}^{-1}x\|_{U'}&=&\|E^*(A^*+P_{\alpha,\varepsilon,T})
    \Pi_{\alpha,\varepsilon,T}^{-1}x+E^*(\bar{\lambda}I-P_{\alpha,\varepsilon,T})
    \Pi_{\alpha,\varepsilon,T}^{-1}x\|_{U'}\nonumber\\
    &=&\|E^*\Pi_{\alpha,\varepsilon,T}^{-1}\mathcal{A}x
    +E^*(\bar{\lambda}I-P_{\alpha,\varepsilon,T})
    \Pi_{\alpha,\varepsilon,T}^{-1}x\|_{U'}\nonumber\\
    &\leq& \left(\|E^*\Pi_{\alpha,\varepsilon,T}^{-1}\|_{\mathcal{L}(H;U')}
    +\|E^*(\bar{\lambda}I-P_{\alpha,\varepsilon,T})
    \Pi_{\alpha,\varepsilon,T}^{-1}\|_{\mathcal{L}(H;U')}\right)\|x\|_{D(\mathcal{A})}.
\end{eqnarray*}
    These, along with \eqref{12-192.29w} and the conjugate-linearity of $J_1$ and $\Pi_{\alpha,\varepsilon,T}$, yields $K\in\mathcal{L}(D(\mathcal{A});U)$.

    Next, we will show that
     $\mathcal{S}(\cdot)$ (as well as $\mathcal{A}$) and $K$ verify the conditions $(i)$,
    $(ii)$ and $(iii)$ in  Definition \ref{yu-definition-11-17-1} (with $\omega=\frac{1}{2}(\alpha-\varepsilon)$) one by one.
\par
\vskip 5pt
\noindent  \emph{Sub-step 5.1.  We prove $(i)$ in  Definition \ref{yu-definition-11-17-1} with $\omega=\frac{1}{2}(\alpha-\varepsilon)$}.

We first claim
\begin{equation}\label{yu-11-28-19}
    \langle\mathcal{S}(t)x,\Pi_{\alpha,\varepsilon,T}^{-1}\mathcal{S}(t)x\rangle_{H,H'}
    \leq  e^{-(\alpha-\varepsilon)t}\langle x, \Pi^{-1}_{\alpha,\varepsilon,T}x\rangle_{H,H'}\;\;\mbox{for all}\;\;x\in D(\mathcal{A}),\; t\in\mathbb{R}^+.
\end{equation}
To this end, we arbitrarily fix $x\in D(\mathcal{A})$, $t$ and $\sigma$ with $t\geq \sigma\geq 0$.
    Then by (\ref{yu-11-28-14}), we have
\begin{eqnarray}\label{yu-11-28-15}
    \langle \mathcal{S}(\sigma)x,\Pi_{\alpha,\varepsilon,T}^{-1}\mathcal{S}(\sigma)x\rangle_{H,H'}
    &=& \langle \mathcal{S}(t)x,\Pi_{\alpha,\varepsilon,T}^{-1}
    \mathcal{S}(t)x\rangle_{H,H'}
    +\int_\sigma^t\langle \Pi_{\alpha,\varepsilon,T}^{-1}\mathcal{S}(s)x,
    \widehat{Q_{\alpha,\varepsilon,T}}\Pi_{\alpha,\varepsilon,T}^{-1}
    \mathcal{S}(s)x\rangle_{H',H}ds\nonumber\\
    &\;&+TC_1(\alpha)e^{\alpha T}\int_\sigma^t
    \langle J_1B^*\Pi_{\alpha,\varepsilon,T}^{-1}\mathcal{S}(s)x,
    B^*\Pi_{\alpha,\varepsilon,T}^{-1}\mathcal{S}(s)x\rangle_{U,U'}ds.
\end{eqnarray}
    Meanwhile, by (\ref{yu0-bb-1}) and (\ref{yu-11-23-3}) (the definitions of $P_{\alpha,\varepsilon,T}$ and $\widehat{Q_{\alpha,\varepsilon,T}}$), we see
\begin{equation}\label{yu-11-28-16w}
    \widehat{Q_{\alpha,\varepsilon,T}}\Pi_{\alpha,\varepsilon,T}^{-1}=
    (\alpha-\varepsilon)I+Q_{\alpha,\varepsilon,T}\Pi_{\alpha,\varepsilon,T}^{-1},
\end{equation}
    where $Q_{\alpha,\varepsilon,T}$ is given by (\ref{yu-11-13-1-b})
    and is non-negative (which follows from $(ii)$ in Proposition \ref{yu-proposition-11-12-1}, since $(\varepsilon,T)$ verifies (\ref{yu-mmmm-1})). Now, by \eqref{yu-11-28-16w} and the non-negativity of
    $Q_{\alpha,\varepsilon,T}$, we find
\begin{eqnarray}\label{yu-11-28-17}
    &\;&\int_\sigma^t\langle \Pi_{\alpha,\varepsilon,T}^{-1}\mathcal{S}(s)x,
    \widehat{Q_{\alpha,\varepsilon,T}}
    \Pi_{\alpha,\varepsilon,T}^{-1}\mathcal{S}(s)x\rangle_{H',H}ds \nonumber\\
    &=&(\alpha-\varepsilon)\int_{\sigma}^t\langle \mathcal{S}(s)x,\Pi_{\alpha,\varepsilon,T}^{-1}
    \mathcal{S}(s)x\rangle_{H,H'}ds+\int_{\sigma}^t\langle Q_{\alpha,\varepsilon,T}\Pi_{\alpha,\varepsilon,T}^{-1}\mathcal{S}(s)x,
    \Pi_{\alpha,\varepsilon,T}^{-1}
    \mathcal{S}(s)x\rangle_{H,H'}ds\nonumber\\
    &\geq& (\alpha-\varepsilon)\int_{\sigma}^t\langle \mathcal{S}(s)x,\Pi_{\alpha,\varepsilon,T}^{-1}
    \mathcal{S}(s)x\rangle_{H,H'}ds.
\end{eqnarray}
    From \eqref{yu-11-28-17} and (\ref{yu-11-28-15}), it follows that
\begin{eqnarray*}\label{yu-11-28-18}
    \langle \mathcal{S}(\sigma)x,\Pi_{\alpha,\varepsilon,T}^{-1}\mathcal{S}(\sigma)x\rangle_{H,H'}
    \geq \langle \mathcal{S}(t)x,\Pi_{\alpha,\varepsilon,T}^{-1}
    \mathcal{S}(t)x\rangle_{H,H'}
    +(\alpha-\varepsilon)\int_{\sigma}^t\langle \mathcal{S}(s)x,\Pi_{\alpha,\varepsilon,T}^{-1}
    \mathcal{S}(s)x\rangle_{H,H'}ds.
\end{eqnarray*}
    Since $x\in D(\mathcal{A})$ and $t\geq \sigma\geq 0$ were arbitrarily taken, we can apply  the Gronwall inequality (see \cite[Lemma 3.2]{Komornik-1997}) in the above inequality to get \eqref{yu-11-28-19}.

    We next claim that there exists  $C(\alpha,\varepsilon,T)>0$ such that
    \begin{equation}\label{yu-11-28-21}
    T(C(\alpha,\varepsilon,T))^{-2}\|y\|^2_{H}
    \leq \langle y,\Pi_{\alpha,\varepsilon,T}^{-1}y\rangle_{H,H'}
    \leq T^{-1}\|y\|_{H}^2\;\;\mbox{for any}\;\;y\in H.
\end{equation}
Indeed, according to  the second inequality in (\ref{yu-12-3-10}) and the boundedness of $\Pi_{\alpha,\varepsilon,T}$ (see Lemma \ref{yu-lemma-12-3-2}), there exists  $C(\alpha,\varepsilon,T)>0$ such that for all $\varphi\in H'$,
\begin{equation*}\label{yu-11-28-20}
T\|\varphi\|_{H'}^2\leq \langle\Pi_{\alpha,\varepsilon, T}\varphi,\varphi\rangle_{H,H'}
\;\;\mbox{and}\;\;
\|\Pi_{\alpha,\varepsilon, T}\varphi\|_{H}\leq C(\alpha,\varepsilon,T)\|\varphi\|_{H'}.
\end{equation*}
    These lead to \eqref{yu-11-28-21}.

    Now it follows from  \eqref{yu-11-28-21} and  (\ref{yu-11-28-19}) that
\begin{equation*}\label{yu-11-28-22}
    \|\mathcal{S}(t)x\|_H\leq \left(\frac{C(\alpha,\varepsilon,T)}{T}\right)^2
    e^{-(\alpha-\varepsilon)t}\|x\|^2_H\;\;\mbox{for any}\;\;t\in\mathbb{R}^+,\; x\in D(\mathcal{A}).
\end{equation*}
    This, together with the density of $D(\mathcal{A})$ in $H$, shows
\begin{equation*}\label{yu-11-28-23}
    \|\mathcal{S}(t)\|_{\mathcal{L}(H)}\leq \frac{C(\alpha,\varepsilon,T)}{T}e^{-\frac{1}{2}(\alpha-\varepsilon)t}\;\;\mbox{for any}\;\;t\in\mathbb{R}^+,
\end{equation*}
   i.e.,  $(i)$ in  Definition \ref{yu-definition-11-17-1} with $\omega=\frac{1}{2}(\alpha-\varepsilon)$
   is true.
\par
   \vskip 5pt
\noindent  \emph{Sub-step 5.2. We prove $(ii)$ in  Definition \ref{yu-definition-11-17-1} with $\omega=\frac{1}{2}(\alpha-\varepsilon)$. }

This follows from  \eqref{12-192.29w} and the conclusion in Step 2 at once.

\par
\vskip 5pt
\noindent  \emph{Sub-step 5.3. We prove $(iii)$ in  Definition \ref{yu-definition-11-17-1} with $\omega=\frac{1}{2}(\alpha-\varepsilon)$. }

In the case that  $C_1(\alpha)=0$, we see from \eqref{12-192.29w}
that $K=0$, thus  $(iii)$ holds for this case.

In the case that  $C_1(\alpha)\neq 0$,
it follows by  (\ref{yu-11-28-15}), (\ref{yu-11-28-17}) (with $\sigma=0$) and \eqref{12-192.29w} that
when $x\in D(\mathcal{A})$,
\begin{equation*}\label{yu-11-28-24}
    \langle x,\Pi_{\alpha,\varepsilon,T}^{-1}x\rangle_{H,H'}
    \geq \langle \mathcal{S}(t)x,\Pi^{-1}_{\alpha,\varepsilon,T}\mathcal{S}(t)x\rangle_{H,H'}
    +(TC_1(\alpha)e^{\alpha T})^{-1}\int_0^t\|K\mathcal{S}(s)x\|^2_{U'}ds
    \;\;\mbox{for each}\;\;t\in \mathbb{R}^+.
\end{equation*}
    Letting $t\to +\infty$ in the above, using $(i)$ and
      (\ref{yu-11-28-21}), we see
\begin{equation*}\label{yu-11-28-25}
    \int_0^{+\infty}\|K\mathcal{S}(s)x\|^2_{U'}ds\leq
    C_1(\alpha)e^{\alpha T}\|x\|_{H}^2
    \;\;\mbox{for any}\;\;x\in D(\mathcal{A}),
\end{equation*}
   which leads to $(iii)$ for this case.
\par
    In summary, $\mathcal{S}(\cdot)$ (as well as $\mathcal{A}$) and $K$ verify the conditions $(i)$, $(ii)$ and $(iii)$  in Definition \ref{yu-definition-11-17-1}
    with $\omega=\frac{1}{2}(\alpha-\varepsilon)$.

    \par
\vskip 5pt
\noindent  \emph{Step 6. We finish the proof. }

Arbitrarily fix  $T$ satisfying (\ref{1.7,12-27w}). Let $\widehat{\varepsilon}:=T^{-1}\ln [C_2(\alpha)]$.
Then one can easily check that $(\widehat{\varepsilon}, T)$ satisfies
  (\ref{yu-mmmm-1}). Thus by  the conclusions in Step 5, we complete
     the proof of Theorem \ref{yu-theorem-12-3-1}.
\end{proof}

\vskip 5pt
    Our proof of Theorem \ref{yu-theorem-12-3-1} shows, indeed,  the following
         more general result:
\begin{theorem}\label{yu-theorem-12-16-1}
    Assume that $(H_1)$-$(H_4)$ are true.
    Then for each  pair $(\varepsilon,T)\in[0,\alpha)\times (0,+\infty)$ verifying
    (\ref{yu-mmmm-1}), the following operator (from $\Pi_{\alpha,\varepsilon,T}[D(A^*)]$ to $U$) is a feedback law stabilizing the system (\ref{yu-12-2-b-1}) with the decay rate $\frac{1}{2}(\alpha-\varepsilon)$:
\begin{equation*}\label{yu-12-3-5}
     K_{\varepsilon,T}:=-TC_1(\alpha)e^{\alpha T}J_1B^*\Pi_{\alpha,\varepsilon,T}^{-1},
\end{equation*}
    where
    $\Pi_{\alpha,\varepsilon,T}$ is defined by (\ref{yu-11-12-3}).
\end{theorem}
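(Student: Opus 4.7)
The plan is to observe that the proof of Theorem \ref{yu-theorem-12-3-1} was already carried out, in Steps 1--5, for an arbitrary pair $(\varepsilon,T)\in[0,\alpha)\times(0,+\infty)$ satisfying \eqref{yu-mmmm-1}; only in the final Step 6 is the particular choice $\widehat{\varepsilon}=T^{-1}\ln[C(\alpha)]$ singled out, and it is singled out merely to package the family of feedback laws by the single parameter $T$. Hence Theorem \ref{yu-theorem-12-16-1} is essentially obtained by rereading those five steps with $\varepsilon$ left free subject to \eqref{yu-mmmm-1}, and this is the route I would take.

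Concretely, I would first redefine $\mathcal{V}_{\alpha,\varepsilon,T}(\cdot)$ on $H'$ as the $C_0$-group generated by $-A^*-P_{\alpha,\varepsilon,T}$ (with $P_{\alpha,\varepsilon,T}$ given by \eqref{yu0-bb-1}), and then put $\mathcal{S}_{\alpha,\varepsilon,T}(t):=\Pi_{\alpha,\varepsilon,T}\mathcal{V}_{\alpha,\varepsilon,T}(t)\Pi_{\alpha,\varepsilon,T}^{-1}$; since $\Pi_{\alpha,\varepsilon,T}$ is invertible by $(iii)$ of Lemma \ref{yu-lemma-12-3-2}, $\mathcal{S}_{\alpha,\varepsilon,T}(\cdot)$ is a $C_0$-group on $H$ with generator $\mathcal{A}_{\alpha,\varepsilon,T}=\Pi_{\alpha,\varepsilon,T}(-A^*-P_{\alpha,\varepsilon,T})\Pi_{\alpha,\varepsilon,T}^{-1}$ on the domain $\Pi_{\alpha,\varepsilon,T}[D(A^*)]$. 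Using the Lyapunov equation \eqref{yu-11-12-4} from Proposition \ref{yu-proposition-11-12-1} (which holds for every such pair $(\varepsilon,T)$), I would identify $\mathcal{A}_{\alpha,\varepsilon,T}=\widetilde{A}+BK_{\varepsilon,T}$ on $D(\mathcal{A}_{\alpha,\varepsilon,T})$, thereby verifying $(b_2)$ of Definition \ref{yu-definition-11-17-1}.

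Next, I would transcribe the energy identity
\begin{equation*}
\langle x,\Pi_{\alpha,\varepsilon,T}^{-1}x\rangle_{H,H'}=\langle \mathcal{S}_{\alpha,\varepsilon,T}(t)x,\Pi_{\alpha,\varepsilon,T}^{-1}\mathcal{S}_{\alpha,\varepsilon,T}(t)x\rangle_{H,H'}+\int_0^t\langle\Pi_{\alpha,\varepsilon,T}^{-1}\mathcal{S}_{\alpha,\varepsilon,T}(s)x,\widehat{Q_{\alpha,\varepsilon,T}}\Pi_{\alpha,\varepsilon,T}^{-1}\mathcal{S}_{\alpha,\varepsilon,T}(s)x\rangle_{H',H}ds+TD(\alpha)e^{\alpha T}\int_0^t\|K_{\varepsilon,T}\mathcal{S}_{\alpha,\varepsilon,T}(s)x\|_{U'}^2 ds
\end{equation*}
for $x\in D(\mathcal{A}_{\alpha,\varepsilon,T})$, exactly as in Steps 3--4; the derivation uses only $(H_3)$, Lemma \ref{yu-lemma-11-23-2}, Lemma \ref{yu-lemma-11-23-1} and the Yosida regularization argument, none of which depend on the specific value of $\varepsilon$. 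Combining this with \eqref{yu-11-28-16w} and the non-negativity of $Q_{\alpha,\varepsilon,T}$ (which is the sole point at which \eqref{yu-mmmm-1} is invoked, via $(ii)$ of Proposition \ref{yu-proposition-11-12-1}), I obtain $\langle\mathcal{S}_{\alpha,\varepsilon,T}(t)x,\Pi_{\alpha,\varepsilon,T}^{-1}\mathcal{S}_{\alpha,\varepsilon,T}(t)x\rangle_{H,H'}\leq e^{-(\alpha-\varepsilon)t}\langle x,\Pi_{\alpha,\varepsilon,T}^{-1}x\rangle_{H,H'}$ by Gronwall's inequality; the two-sided bound \eqref{yu-11-28-21} on the quadratic form associated with $\Pi_{\alpha,\varepsilon,T}^{-1}$ then yields $(b_1)$ with decay rate $\tfrac{1}{2}(\alpha-\varepsilon)$, and letting $t\to+\infty$ in the energy identity gives $(b_3)$.

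No new obstacle arises beyond those already met in the proof of Theorem \ref{yu-theorem-12-3-1}: the only delicate point is the non-negativity of $Q_{\alpha,\varepsilon,T}$, which precisely motivates the restriction \eqref{yu-mmmm-1}, and this is what the hypothesis of Theorem \ref{yu-theorem-12-16-1} grants. Therefore the general statement is obtained simply by omitting Step 6.
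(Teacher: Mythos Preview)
Your proposal is correct and matches the paper's own treatment exactly: the paper states Theorem \ref{yu-theorem-12-16-1} immediately after the proof of Theorem \ref{yu-theorem-12-3-1} with the single remark that ``our proof of Theorem \ref{yu-theorem-12-3-1} shows, indeed, the following more general result,'' since Steps 1--5 were carried out for an arbitrary $(\varepsilon,T)$ satisfying \eqref{yu-mmmm-1} and only Step 6 specializes to $\widehat{\varepsilon}=T^{-1}\ln[C(\alpha)]$. Your observation that the non-negativity of $Q_{\alpha,\varepsilon,T}$ is the sole place where \eqref{yu-mmmm-1} enters is exactly the point.
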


\begin{remark}
 $(i)$ Several facts are given. First,
it follows from \eqref{yu-11-28-14} that the operator
$\Pi^{-1}_{\alpha,\varepsilon,T}$, with $(\varepsilon,T)\in [0,\alpha)\times(0,+\infty)$,
 satisfies the following Riccati equation:
 \begin{eqnarray}\label{yu-12-11-1}
    &\;&\langle A^*\mathcal{P}x,y\rangle_{H',H}+\langle x, A^*\mathcal{P}y\rangle_{H,H'}
    -TC_1(\alpha)e^{\alpha T}\langle JB^*\mathcal{P}x,B^*\mathcal{P}y\rangle_{H,H'}\nonumber\\
    &=&-\left\langle \left((\alpha-\varepsilon)\Pi_{\alpha,\varepsilon,T}^{-1}
    +\Pi_{\alpha,\varepsilon,T}^{-1}Q_{\alpha,\varepsilon,T}
    \Pi_{\alpha,\varepsilon,T}^{-1}\right)
    x,y\right\rangle_{H',H},\;\;x,y\in \Pi_{\alpha,\varepsilon,T}[D(A^*)],
\end{eqnarray}
  where
     $Q_{\alpha,\varepsilon,T}
    :=\Lambda_{\alpha,\varepsilon,T}(T)-C_2(\alpha) J_2$ is a conjugate-linear and bounded operator from
    $H'$ to $H$.  Second, if  $Q_{\alpha,\varepsilon,T}$ is non-negative,
    then the solvability of
    the equation (\ref{yu-12-11-1})
    is equivalent to the finite cost condition of the infinite-horizon LQ problem corresponding to
    (\ref{yu-12-11-1}) (see \cite[Theorem 2.2]{Flandoli-Lasiecka-Triggiani}). Third,  the finite cost condition of the aforementioned
    LQ problem is equivalent to the stabilizability of the system \eqref{yu-12-2-b-1} (see
    \cite[Proposition 3.9]{Liu-Wang-Xu-Yu}).
     Finally, we can only show that
    when $(\varepsilon,T)$  satisfies (\ref{yu-mmmm-1}), the above $Q_{\alpha,\varepsilon,T}$ is non-negative.
    These facts explain why the pair $(\varepsilon,T)$  needs to satisfy (\ref{yu-mmmm-1}) in
    Theorem \ref{yu-theorem-12-16-1}.

    $(ii)$ From the discussions in the note $(i)$, we see that our method
           is to construct directly an operator which satisfies
      a  Riccati equation (related to an infinite-horizon LQ problem)
      instead of solving  a  Riccati equation
       (the latter needs to prove the existence
      of solutions for the corresponding Riccati equation). 

\end{remark}

\section{Further studies}\label{section411}

The quantity $\omega^*$ defined in (\ref{yu-12-19-1}) is called as the best stabilization decay rate
for the system  (\ref{yu-12-2-b-1}). It is the same as that defined by  \cite[(4)]{Trelat-Wang-Xu}.
When the system (\ref{yu-12-2-b-1}) is stabilizable, we have $\omega^*\in(0,+\infty]$. In particular, when $\omega^*=+\infty$, the system (\ref{yu-12-2-b-1}) is completely stabilizable.
Before stating the  main result of this section, we give the next proposition.

\begin{proposition}\label{prop4.111}
    Suppose that  $(H_1)$-$(H_3)$ hold and the system (\ref{yu-12-2-b-1}) is stabilizable. Let $\omega^*\in(0,+\infty]$ be
given by (\ref{yu-12-19-1}). Then for each $\theta\in(0,\omega^*)$,
there is $\overline{C}_1(\theta)\geq 0$ and $\overline{C}_2(\theta)\geq 1$ so that
   (\ref{yu-11-16-6}) holds for $\alpha=\theta$, $C_1(\alpha)=\overline{C}_1(\theta)$ and $C_2(\alpha)=\overline{C}_2(\theta)$, i.e.,
\begin{eqnarray}\label{yu-12-20-1}
    \|S^*(t)\varphi\|_{H'}^2\leq \overline{C}_1(\theta)\int_0^t\|B^*S^*(s)\varphi\|_{U'}^2ds+\overline{C}_2(\theta) e^{-2\theta t}\|\varphi\|_{H'}^2, \;\;t>0, \;\varphi\in D(A^*).
\end{eqnarray}
\end{proposition}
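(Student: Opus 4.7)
The plan is to derive the dynamic weak observability inequality \eqref{yu-12-20-1} directly from the definition of stabilizability by a duality argument: Definition \ref{yu-definition-11-17-1} supplies exponential decay of a closed-loop semigroup together with an $L^2$-admissibility estimate $(b_3)$ for the feedback, and pairing a closed-loop trajectory against $S^*(t)\varphi$ in the transposition identity of $(a_4)$ naturally produces the two terms on the right-hand side of \eqref{yu-12-20-1}.

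First, I would fix $\theta\in(0,\omega^*)$ and, using \eqref{yu-12-19-1}, pick some $\omega\in(\theta,\omega^*)$ so that \eqref{yu-12-2-b-1} is stabilizable with decay rate $\omega$. Definition \ref{yu-definition-11-17-1} then furnishes a $C_0$-semigroup $\mathcal{S}(\cdot)$ on $H$ with generator $\mathcal{A}$, constants $C\geq 1$ and $D\geq 0$, and a feedback $K\in\mathcal{L}(D(\mathcal{A});U)$ satisfying $\|\mathcal{S}(t)\|_{\mathcal{L}(H)}\leq Ce^{-\omega t}$, $\mathcal{A}x=(\widetilde{A}+BK)x$ for $x\in D(\mathcal{A})$, and $\|K\mathcal{S}(\cdot)x\|_{L^2(\mathbb{R}^+;U)}\leq D\|x\|_H$.

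Next, for each $x_0\in D(\mathcal{A})$ the closed-loop trajectory $t\mapsto \mathcal{S}(t)x_0$, together with the control $u(t):=K\mathcal{S}(t)x_0\in L^2(\mathbb{R}^+;U)$, is by $(b_2)$ the transposition solution of \eqref{yu-12-2-b-1} with initial state $x_0$. Writing the identity in $(a_4)$ of Remark \ref{yu-remark-12-2-1} and rearranging yields, for every $\varphi\in D(A^*)$ and $t\geq 0$,
\begin{equation*}
\langle x_0,S^*(t)\varphi\rangle_{H,H'}
=\langle \mathcal{S}(t)x_0,\varphi\rangle_{H,H'}
-\int_0^t\langle K\mathcal{S}(s)x_0,B^*S^*(t-s)\varphi\rangle_{U,U'}\,ds.
\end{equation*}
The exponential decay of $\mathcal{S}(\cdot)$, Cauchy--Schwarz, the admissibility estimate $(b_3)$, and the change of variables $r=t-s$ in the integral then give
\begin{equation*}
|\langle x_0,S^*(t)\varphi\rangle_{H,H'}|
\leq Ce^{-\omega t}\|x_0\|_H\|\varphi\|_{H'}
+D\|x_0\|_H\Bigl(\int_0^t\|B^*S^*(r)\varphi\|_{U'}^2\,dr\Bigr)^{1/2}.
\end{equation*}

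Finally, I would take the supremum over $x_0\in D(\mathcal{A})$ with $\|x_0\|_H\leq 1$; density of $D(\mathcal{A})$ in $H$ identifies this supremum with $\|S^*(t)\varphi\|_{H'}$. Squaring, applying $(a+b)^2\leq 2a^2+2b^2$, and bounding $e^{-2\omega t}\leq e^{-2\theta t}$ (valid for $t\geq 0$ since $\omega>\theta$), I arrive at \eqref{yu-12-20-1} with $\overline{D}(\theta):=2D^2$ and $\overline{C}(\theta):=2C^2$. The main subtle step I anticipate is the identification, via transposition, of the pair $(\mathcal{S}(\cdot)x_0,K\mathcal{S}(\cdot)x_0)$ as the unique solution of \eqref{yu-12-2-b-1} in the sense of $(a_4)$; this is where $(b_2)$, the admissibility $(b_3)$ (so that $K\mathcal{S}(\cdot)x_0\in L^2$), and the extension $\widetilde{A}$ defined by \eqref{1.3,202211} are all used together, but it is a standard variation-of-constants verification rather than a genuinely new difficulty.
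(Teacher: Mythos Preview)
Your proposal is correct and follows essentially the same duality argument as the paper: pair the closed-loop trajectory $\mathcal{S}(\cdot)x_0$ against $S^*(t-\cdot)\varphi$, use $(b_2)$ to obtain the transposition identity, then apply $(b_1)$, $(b_3)$, Cauchy--Schwarz, and density of $D(\mathcal{A})$ to conclude with $\overline{D}(\theta)=2D^2$ and $\overline{C}(\theta)=2C^2$. The only cosmetic difference is that the paper directly takes the feedback with decay rate $\theta$ (using that achievable decay rates are downward closed), whereas you pick an intermediate $\omega\in(\theta,\omega^*)$ and bound $e^{-2\omega t}\le e^{-2\theta t}$ at the end; and the paper carries out the differentiation $\frac{d}{ds}\langle \mathcal{S}(s)x,S^*(t-s)\varphi\rangle$ explicitly rather than citing $(a_4)$, which is exactly the ``standard variation-of-constants verification'' you flag.
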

\begin{proof}
    First of all, it follows by (\ref{yu-12-19-1}) and Definition \ref{yu-definition-11-17-1} that for any $\theta\in(0,\omega^*)$,  there is a $C_0$-semigroup $\mathcal{S}_{\theta}(\cdot)$ on $H$ (with the generator $\mathcal{A}_{\theta}: D(\mathcal{A}_{\theta})\subset H\to H$) and an operator
    $K_{\theta}\in\mathcal{L}(D(\mathcal{A}_{\theta});U)$ so that
\begin{enumerate}
  \item [$(b_1)$] there exists  $\widehat{C}_1(\theta)\geq 1$ such that $\|\mathcal{S}_{\theta}(t)\|_{\mathcal{L}(H)}
  \leq \widehat{C}_1(\theta)e^{-\theta t}$ for any  $t\in\mathbb{R}^+$;
\item[$(b_2)$] for any $x\in D(\mathcal{A}_{\theta})$,  $\mathcal{A}_{\theta} x=(\widetilde{A}+BK_{\theta})x$;
  \item [$(b_3)$] there exists  $\widehat{C}_2(\theta)\geq 0$ such that $\|K_{\theta}\mathcal{S}_{\theta}(\cdot)x\|_{L^2(\mathbb{R}^+;U)}
  \leq \widehat{C}_2(\theta)\|x\|_H$ for any $x\in D(\mathcal{A}_{\theta})$.
\end{enumerate}
    Arbitrarily fix  $x\in D(\mathcal{A}_{\theta})$, $\varphi\in D(A^*)$ and $t>0$. Then it follows by $(b_2)$ and (\ref{1.3,202211}) that
\begin{eqnarray*}\label{yu-12-20-5}
    &\;&\frac{d}{ds}\langle \mathcal{S}_{\theta}(s)x,S^*(t-s)\varphi\rangle_{H,H'}\nonumber\\
    &=&\langle \mathcal{A}_{\theta}\mathcal{S}_{\theta}(s)x, S^*(t-s)\varphi\rangle_{H,H'}-\langle\mathcal{S}_{\theta}(s)x, A^*S^*(t-s)\varphi\rangle_{H,H'}\nonumber\\
    &=&\langle (\widetilde{A}+BK_{\theta})\mathcal{S}_{\theta}(s)x, S^*(t-s)\varphi\rangle_{D(A^*)',D(A^*)}-\langle\mathcal{S}_{\theta}(s)x, A^*S^*(t-s)\varphi\rangle_{H,H'}\nonumber\\
    &=&\langle BK_{\theta}\mathcal{S}_{\theta}(s)x, S^*(t-s)\varphi\rangle_{D(A^*)',D(A^*)}=\langle K_{\theta}\mathcal{S}_{\theta}(s)x, B^*S^*(t-s)\varphi\rangle_{U,U'},\;\; s\in(0,t).
\end{eqnarray*}
    By integrating the above equality with respect to $s$ over $[0,t]$, we get
\begin{equation*}\label{yu-12-20-6}
    \langle \mathcal{S}_{\theta}(t)x,\varphi\rangle_{H,H'}
    -\langle x,S^*(t)\varphi\rangle_{H,H'}=\int_0^t\langle K_{\theta}\mathcal{S}_{\theta}(s)x,B^*S^*(t-s)\varphi\rangle_{U,U'}ds.
\end{equation*}
    This, together with $(b_1)$ and $(b_3)$, yields
\begin{equation*}\label{yu-12-20-7}
    |\langle x,S^*(t)\varphi\rangle_{H,H'}|
    \leq \widehat{C}_2(\theta)\|x\|_H\left(\int_0^t\|B^*S^*(t-s)\varphi\|_{U'}^2ds\right)^{\frac{1}{2}}
    +\widehat{C}_1(\theta)e^{-\theta t}\|x\|_H\|\varphi\|_{H'}.
\end{equation*}
Since $t>0$, $\varphi\in D(A^*)$ and $x\in D(\mathcal{A}_{\theta})$ were arbitrarily taken, the above,  along with
    the density of $D(\mathcal{A}_{\theta})$ in $H$, gives
\begin{equation*}\label{yu-12-20-8}
    \|S^*(t)\varphi\|_{H'}^2\leq 2(\widehat{C}_2(\theta))^2\int_0^t\|B^*S^*(s)\varphi\|_{U'}^2ds
    +2(\widehat{C}_1(\theta))^2 e^{-2\theta t}\|\varphi\|_{H'}^2,
    \;\;t>0,\;\varphi\in D(A^*),
\end{equation*}
   which leads to  (\ref{yu-12-20-1}) with $\overline{C}_1(\theta)=2(\widehat{C}_2(\theta))^2$ and $\overline{C}_2(\theta)=2(\widehat{C}_1(\theta))^2$. Thus, we complete the proof of Proposition \ref{prop4.111}.
\end{proof}

\begin{theorem}\label{yu-theorem-12-20-1}
    Assume that $(H_1)$-$(H_3)$ are true and
the system (\ref{yu-12-2-b-1}) is stabilizable. Let $\omega^*\in(0,+\infty]$ be
given by (\ref{yu-12-19-1}).
Let $\overline{C}_1(\theta)\geq 0$ and $\overline{C}_2(\theta)\geq 1$, with $\theta\in(0,\omega^*)$,
be given by Proposition \ref{prop4.111}. Then for each $\mu\in(0,\omega^*)$,  the following  conclusions are true:
\begin{enumerate}
  \item [$(i)$] If $\omega^*\in(0,+\infty)$, then for each
  $T$ satisfying
 \begin{equation}\label{1.9,12-27w}
   (\omega^*-\mu)^{-1}\ln \left[ \overline{C}_2\left(\overline{\theta}\right)\right] < T<+\infty,
   \;\;\mbox{with}\;\;\overline{\theta}:=\frac{1}{2}(\omega^*+\mu),
\end{equation}
      the following operator (from $\Pi_{2\overline{\theta},\overline{\varepsilon},T}[D(A^*)]$ to $U$)
       is a feedback law stabilizing the system (\ref{yu-12-2-b-1})
        with the decay rate
          $\mu$:
\begin{equation}\label{yu-11-20-1000}
    K_{\mu,T}:=-T\overline{C}_1\left(\overline{\theta}\right)e^{2\overline{\theta} T}J_1B^*
    \Pi_{2\overline{\theta},\overline{\varepsilon},T}^{-1},
\end{equation}
      where $\overline{\theta}$ is given in \eqref{1.9,12-27w}, $\overline{\varepsilon}:=T^{-1}\ln \left[ \overline{C}_2\left(\overline{\theta}\right)\right]$
     and
     $\Pi_{2\overline{\theta},\overline{\varepsilon},T}$ is defined by (\ref{yu-11-12-3}) with
     \begin{equation*}
     \alpha=2\overline{\theta};\; \varepsilon=\overline{\varepsilon};\;C_1(\alpha)=\overline{C}_1
     \left(\overline{\theta}\right);\;
     C_2(\alpha)=\overline{C}_2\left(\overline{\theta}\right).
     \end{equation*}

   \item [$(ii)$] If $\omega^*=+\infty$, then for each  $T$ satisfying
\begin{equation}\label{yu-12-30-1}
    \mu^{-1}\ln \left[ \overline{C}_2\left(\theta^*\right)\right]< T<+\infty,\;\;\mbox{with}\;\;\theta^*:=\frac{3\mu}{2},
\end{equation}
    the following operator (from $\Pi_{2\theta^*,\varepsilon^*,T}[D(A^*)]$ to $U$)
    is a feedback law stabilizing the system (\ref{yu-12-2-b-1})
        with the decay rate
          $\mu$:
    \begin{equation}\label{yu-12-26-b-1}
K_{\mu,T}':=-T\overline{C}_1\left(\theta^*\right)e^{2\theta^* T}J_1B^*\Pi_{2\theta^*,\varepsilon^*,T}^{-1},
    \end{equation}
    where $\theta^*$ is given in \eqref{yu-12-30-1}, $\varepsilon^*:=T^{-1}\ln \left[\overline{C}_2\left(\theta^*\right)\right]$
    and
     $\Pi_{2\theta^*,\varepsilon^*,T}$ is defined by (\ref{yu-11-12-3}) with
      \begin{equation*}
      \alpha=2\theta^*;\;  \varepsilon=\varepsilon^*;\;C_1(\alpha)=\overline{C}_1\left(\theta^*\right);
      C_2(\alpha)=\overline{C}_2\left(\theta^*\right).
           \end{equation*}

\end{enumerate}

\end{theorem}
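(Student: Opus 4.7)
The plan is to reduce both cases to a direct application of Theorem \ref{yu-theorem-12-3-1} (equivalently, Theorem \ref{yu-theorem-12-16-1}), after using Proposition \ref{prop4.111} to manufacture a version of $(H_4)$ in which the parameter $\alpha$ can be chosen strictly between $2\mu$ and $2\omega^*$. The key quantitative observation is that, under $(H_4)$ with coefficients $\alpha, D(\alpha), C(\alpha)$, Theorem \ref{yu-theorem-12-3-1} produces a feedback law with decay rate
\[
\tfrac{1}{2}\bigl(\alpha-T^{-1}\ln[C(\alpha)]\bigr),
\]
which, by choosing $\alpha>2\mu$ and $T$ large enough, can be made $\geq \mu$.

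For part $(i)$, I would first set $\overline{\theta}:=\tfrac{1}{2}(\omega^*+\mu)$, which lies in $(\mu,\omega^*)$ since $\mu\in(0,\omega^*)$ and $\omega^*<+\infty$. Proposition \ref{prop4.111} applied with $\theta=\overline{\theta}$ yields constants $\overline{D}(\overline{\theta})\geq 0$ and $\overline{C}(\overline{\theta})\geq 1$ such that $(H_4)$ holds with the triple $(\alpha,D(\alpha),C(\alpha))=(2\overline{\theta},\overline{D}(\overline{\theta}),\overline{C}(\overline{\theta}))$. Next I would check that the condition (\ref{1.9,12-27w}) on $T$ implies membership of $T$ in the interval $\mathcal{I}_\alpha=\bigl((2\overline{\theta})^{-1}\ln[\overline{C}(\overline{\theta})],+\infty\bigr)$ required by (\ref{1.7,12-27w}); this is immediate from $2\overline{\theta}=\omega^*+\mu>\omega^*-\mu$, so $(2\overline{\theta})^{-1}<(\omega^*-\mu)^{-1}$. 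Theorem \ref{yu-theorem-12-3-1} then applies and produces, with $\widehat{\varepsilon}=T^{-1}\ln[\overline{C}(\overline{\theta})]=\overline{\varepsilon}$, the feedback law
\[
K_{\mu,T}=-T\overline{D}(\overline{\theta})e^{2\overline{\theta}T}J_1B^*\Pi_{2\overline{\theta},\overline{\varepsilon},T}^{-1},
\]
stabilizing (\ref{yu-12-2-b-1}) with decay rate $\tfrac{1}{2}(2\overline{\theta}-\overline{\varepsilon})=\overline{\theta}-\tfrac{1}{2T}\ln[\overline{C}(\overline{\theta})]$. Finally I would use $T>(\omega^*-\mu)^{-1}\ln[\overline{C}(\overline{\theta})]$ to conclude
\[
\overline{\theta}-\tfrac{1}{2T}\ln[\overline{C}(\overline{\theta})] > \overline{\theta}-\tfrac{\omega^*-\mu}{2}=\mu,
\]
so that the decay rate obtained exceeds $\mu$, yielding $(i)$.

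For part $(ii)$, the argument is analogous: since $\omega^*=+\infty$, any $\theta\in(0,+\infty)$ is admissible in Proposition \ref{prop4.111}, and I would choose $\theta^*:=3\mu/2$ so that $(H_4)$ holds with $(\alpha,D(\alpha),C(\alpha))=(2\theta^*,\overline{D}(\theta^*),\overline{C}(\theta^*))=(3\mu,\overline{D}(\theta^*),\overline{C}(\theta^*))$. The condition (\ref{yu-12-30-1}) asserts $T>\mu^{-1}\ln[\overline{C}(\theta^*)]$, which is stronger than $T>(3\mu)^{-1}\ln[\overline{C}(\theta^*)]=\alpha^{-1}\ln[C(\alpha)]$, so $T\in\mathcal{I}_\alpha$. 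Applying Theorem \ref{yu-theorem-12-3-1} produces the feedback law $K'_{\mu,T}$ in (\ref{yu-12-26-b-1}) and a decay rate
\[
\tfrac{1}{2}\bigl(3\mu-T^{-1}\ln[\overline{C}(\theta^*)]\bigr)>\tfrac{1}{2}(3\mu-\mu)=\mu,
\]
as required.

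I do not expect a genuine technical obstacle here, as the whole argument is a bookkeeping exercise in optimizing the parameters of the already-proved Theorem \ref{yu-theorem-12-3-1}. The only subtle point is the choice of $\overline{\theta}$ (resp.\ $\theta^*$): it must be strictly greater than $\mu$ so that a positive gap $\alpha-2\mu$ is available to absorb the term $T^{-1}\ln[C(\alpha)]$, but also strictly less than $\omega^*$ so that Proposition \ref{prop4.111} is applicable. The particular midpoint $(\omega^*+\mu)/2$ in case $(i)$ and the value $3\mu/2$ in case $(ii)$ are convenient explicit choices that cleanly produce the threshold $(\omega^*-\mu)^{-1}\ln[\overline{C}(\overline{\theta})]$ and $\mu^{-1}\ln[\overline{C}(\theta^*)]$ appearing in (\ref{1.9,12-27w}) and (\ref{yu-12-30-1}).
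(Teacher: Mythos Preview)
Your proof is correct and follows essentially the same route as the paper: in both cases one invokes Proposition~\ref{prop4.111} to obtain $(H_4)$ with $\alpha=2\overline{\theta}$ (resp.\ $\alpha=2\theta^*$), verifies that the hypothesis \eqref{1.9,12-27w} (resp.\ \eqref{yu-12-30-1}) forces $T\in\mathcal{I}_\alpha$ and the resulting decay rate to be at least $\mu$, and then applies Theorem~\ref{yu-theorem-12-3-1}. Your presentation makes the arithmetic slightly more explicit, but the logic and the key parameter choices are identical.
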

\begin{proof}
Arbitrarily fix $\mu\in(0,\omega^*)$.
    To show the conclusion  $(i)$, we arbitrarily fix $T$ satisfying (\ref{1.9,12-27w})
     and write $\overline{\theta}:=\frac{1}{2}(\omega^*+\mu)$.
    Two observations are given in order. First, it follows from Proposition \ref{prop4.111}
    that  $(H_4)$ holds
    for $\alpha=2\overline{\theta}$, $C_1(\alpha)=\overline{C}_1\left(\overline{\theta}\right)$ and $C_2(\alpha)=\overline{C}_2\left(\overline{\theta}\right)$. Second, by (\ref{1.9,12-27w}),
    one can easily check that
    \begin{equation*}
    \frac{1}{2}\left(2\overline{\theta}-T^{-1}\ln \left[\overline{C}_2\left(\overline{\theta}\right)\right]\right)\geq \mu
    \;\;\mbox{and}\;\;(2\overline{\theta})^{-1}\ln \left[\overline{C}_2\left(\overline{\theta}\right)\right]<T<+\infty.
    \end{equation*}
    From the above two observations  and
     Theorem \ref{yu-theorem-12-3-1}, we see that the operator $K_{\mu,T}: \Pi_{2\overline{\theta},\overline{\varepsilon},T}[D(A^*)]\to U$ defined by (\ref{yu-11-20-1000})
    is a feedback law stabilizing  the system (\ref{yu-12-2-b-1}) with the decay rate $\mu$.
    This completes the proof of $(i)$.
\par
   To show $(ii)$, we arbitrarily fix $T$ satisfying (\ref{yu-12-30-1})
    and write $\theta^*:=\frac{3\mu}{2}$. Two facts are given in order:
   First, it follows from  Proposition \ref{prop4.111}  that  $(H_4)$ holds for $\alpha=2\theta^*$, $C_1(\alpha)=\overline{C}_1\left(\theta^*\right)$ and $C_2(\alpha)=\overline{C}_2\left(\theta^*\right)$.
   Second, by (\ref{yu-12-30-1}), one can directly verify that
    \begin{equation*}
    \frac{1}{2}\left(2\theta^*-T^{-1}\ln \left[\overline{C}_2\left(\theta^*\right)\right]\right)\geq \mu\;\;
    \mbox{and}\;\;(2\theta^*)^{-1}\ln \left[\overline{C}_2\left(\theta^*\right)\right]<T<+\infty.
     \end{equation*}
    From these two facts and  Theorem \ref{yu-theorem-12-3-1},
    we see that
    the operator $K_{\mu, T}':\Pi_{2\theta^*,\varepsilon^*,T}[D(A^*)]\to U$ defined by (\ref{yu-12-26-b-1}) is a feedback law stabilizing the system (\ref{yu-12-2-b-1}) with the decay rate  $\mu$, i.e., $(ii)$ holds.

\par
   Hence, we complete the proof of Theorem \ref{yu-theorem-12-20-1}.
\end{proof}

%\section{Some examples}
%\subsection{Controlled ODEs}
%    We consider the following control system:
%\begin{equation}\label{yu-5-16-1}
%    x'(t)=Ax(t)+Bu(t),\;\;t>0,
%\end{equation}
%    where $A\in\mathbb{R}^{n\times n}$, $B\in\mathbb{R}^{n\times m}$
%    ($n,m\in\mathbb{N}^+$) and $u\in L^2(\mathbb{R}^+;U)$. It is clear that the pair
%    $(A,B)$ satisfies the assumptions $(H_1)$-$(H_3)$. It is well known that the system (\ref{yu-5-16-1}) is stabilizable if and only if the pair $(A,B)$ satisfies
%\begin{enumerate}
%  \item [($H'_4$)] $\mbox{Rank}(\beta I-A,B)=n$ for any $\beta\in\mathbb{C}$ with $\mbox{Re}(\beta)\geq 0$.
%\end{enumerate}
%    (See, for example, \cite{??}.) By \cite[Theorem 1]{Trelat-Wang-Xu} and $(iii)$ in Remark
%    \ref{yu-remark-12-2-2}, the condition $(H_4')$ is equivalent to that there exists $\alpha>0$, $C_1(\alpha)\geq 0$ and $C_2(\alpha)\geq 1$ such that
%\begin{equation}\label{yu-5-16-2}
%    \|e^{A^\top t}\varphi\|^2_{\mathbb{R}^n}\leq C_1(\alpha)\int_0^t\|B^\top e^{A^\top s}\varphi\|_{\mathbb{R}^m}^2ds
%    +C_2(\alpha)e^{-\alpha t}\|\varphi\|^2_{\mathbb{R}^n}\;\;\mbox{for any}\;\;t>0,\;\;
%    \varphi\in \mathbb{R}^n.
%\end{equation}
%    The inequality (\ref{yu-11-16-6}) implies that there is $k\in\mathbb{N}^+$ such that
%\begin{equation}\label{yu-5-16-3}
%    \|e^{A^\top t}\varphi\|^2_{\mathbb{R}^n}\leq k\int_0^t\|B^\top e^{A^\top s}\varphi\|_{\mathbb{R}^m}^2ds
%    +ke^{-k^{-1} t}\|\varphi\|^2_{\mathbb{R}^n}\;\;\mbox{for any}\;\;t>0,\;\;
%    \varphi\in \mathbb{R}^n.
%\end{equation}
%
%
%\subsection{Coupled hyperbolic control systems}
%

\section{Appendices}\label{sec-12-5}
\subsection{Appendix A}\label{yu-subsec-12-9-1}
    In this subsection, we present a direct proof for the equivalence of
    the inequalities (\ref{yu-11-16-5}) and (\ref{yu-11-16-6}).
\begin{proposition}\label{yu-proposition-12-9-1}
    The inequalities (\ref{yu-11-16-5}) and (\ref{yu-11-16-6}) are equivalent.
\end{proposition}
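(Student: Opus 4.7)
The plan is to prove the two implications separately; the easy direction is (\ref{yu-11-16-6})$\Rightarrow$(\ref{yu-11-16-5}), while (\ref{yu-11-16-5})$\Rightarrow$(\ref{yu-11-16-6}) requires an iteration argument.

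For (\ref{yu-11-16-6})$\Rightarrow$(\ref{yu-11-16-5}), I would simply fix some $T$ large enough so that $C(\alpha)e^{-\alpha T}<1$, which is possible because $\alpha>0$. Then setting $\delta:=C(\alpha)e^{-\alpha T}\in(0,1)$ and $D(\delta,T):=D(\alpha)$, the desired inequality follows from specialising (\ref{yu-11-16-6}) to $t=T$. This step is essentially immediate.

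For (\ref{yu-11-16-5})$\Rightarrow$(\ref{yu-11-16-6}), the main idea is to iterate (\ref{yu-11-16-5}) along the discrete grid $\{nT\}_{n\in\mathbb{N}^+}$ and then interpolate to arbitrary $t>0$ using the group bound. Concretely, I would first replace $\varphi$ in (\ref{yu-11-16-5}) by $S^{*}(nT)\varphi$ and use the semigroup identity $S^{*}(s)S^{*}(nT)=S^{*}(s+nT)$ together with a change of variables to obtain
\begin{equation*}
\|S^{*}((n+1)T)\varphi\|_{H'}^{2}\leq D(\delta,T)\int_{nT}^{(n+1)T}\|B^{*}S^{*}(s)\varphi\|_{U'}^{2}ds+\delta\|S^{*}(nT)\varphi\|_{H'}^{2}.
\end{equation*}
Induction on $n$, combined with $\delta<1$, should then yield
\begin{equation*}
\|S^{*}(nT)\varphi\|_{H'}^{2}\leq D(\delta,T)\int_{0}^{nT}\|B^{*}S^{*}(s)\varphi\|_{U'}^{2}ds+\delta^{n}\|\varphi\|_{H'}^{2},\quad n\in\mathbb{N}^{+}.
\end{equation*}
Writing $\delta^{n}=e^{-\alpha nT}$ with $\alpha:=-T^{-1}\ln\delta>0$ converts the exponent into the desired decay form.

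Finally, for an arbitrary $t>0$, I would write $t=nT+\tau$ with $n\in\mathbb{N}$ and $\tau\in[0,T)$, and use $\|S^{*}(t)\varphi\|_{H'}^{2}\leq \|S^{*}(\tau)\|_{\mathcal{L}(H')}^{2}\|S^{*}(nT)\varphi\|_{H'}^{2}$, where $\|S^{*}(\tau)\|_{\mathcal{L}(H')}$ is uniformly bounded on $[0,T]$ because $S(\cdot)$ is a $C_{0}$-group by $(H_{1})$. Combining this with the iterated inequality, enlarging the integration interval from $[0,nT]$ to $[0,t]$, and writing $e^{-\alpha nT}=e^{-\alpha t}e^{\alpha(t-nT)}\leq e^{\alpha T}e^{-\alpha t}$, I would absorb all $T$-dependent constants into new constants $D(\alpha)\geq 0$ and $C(\alpha)\geq 1$, producing (\ref{yu-11-16-6}). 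The small case $t\in(0,T)$ is handled by the trivial bound $\|S^{*}(t)\varphi\|_{H'}^{2}\leq \sup_{\tau\in[0,T]}\|S^{*}(\tau)\|_{\mathcal{L}(H')}^{2}\|\varphi\|_{H'}^{2}$, absorbed into $C(\alpha)$. The main delicate point is merely bookkeeping: choosing $C(\alpha)$ large enough that the bound holds uniformly for all $t>0$ and is compatible with the group bound on small intervals.
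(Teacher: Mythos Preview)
Your proposal is correct and follows essentially the same route as the paper: specialise (\ref{yu-11-16-6}) at a large time for the easy direction, and for (\ref{yu-11-16-5})$\Rightarrow$(\ref{yu-11-16-6}) iterate (\ref{yu-11-16-5}) along the grid $\{nT\}$ and then pass to arbitrary $t$ via the uniform bound $\sup_{\tau\in[0,T]}\|S^*(\tau)\|_{\mathcal{L}(H')}$. The only cosmetic difference is in the bookkeeping of the induction: the paper carries the factor $\sum_{j=0}^{n-1}\delta^{j}$ in front of the integral and bounds it by $(1-\delta)^{-1}$ at the end, whereas you use $\delta<1$ inside the induction step to keep the constant equal to $D(\delta,T)$ throughout; your version is slightly sharper but otherwise equivalent.
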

\begin{proof}
We divide the proof by two steps.

\vskip 5pt
\noindent {\it Step 1.
    We first prove $(\ref{yu-11-16-6})\Rightarrow (\ref{yu-11-16-5})$.}

       Let $\alpha>0$, $C_1(\alpha)\geq 0$ and $C_2(\alpha)\geq 1$ be given in (\ref{yu-11-16-6}). Then there is  $\widehat{T}>0$ such that
    $\widehat{\delta}:=C_2(\alpha)e^{-\alpha\widehat{T}}<1$.
    Thus, by taking $t=\widehat{T}$ in  (\ref{yu-11-16-6}), we get (\ref{yu-11-16-5})
    with $T=\widehat{T}$, $C(\alpha,T)=C_1(\alpha)$ and $\delta=\widehat{\delta}$.
\par
   \vskip 5pt
\noindent {\it Step 2. We prove $(\ref{yu-11-16-5})\Rightarrow (\ref{yu-11-16-6})$. }

   Let $T>0$, $\delta\in(0,1)$ and $C(\delta,T)\geq 0$ be given in (\ref{yu-11-16-5}). We
     first claim that
     for any $n\in\mathbb{N}^+$,
    \begin{equation}\label{yu-12-9-2}
    \|S^*(nT)\varphi\|_{H'}^2\leq C(\delta,T)\sum_{j=0}^{n-1}\delta^j
    \int_0^{nT}\|B^*S^*(s)\varphi\|_{U'}^2ds +\delta^n\|\varphi\|_{H'}^2.
\end{equation}
    Indeed, (\ref{yu-11-16-5}) gives \eqref{yu-12-9-2} with $n=1$.
    Suppose that \eqref{yu-12-9-2}, with $n=k$, is true. Then, by (\ref{yu-11-16-5}) and  the time-invariance of the system (\ref{yu-12-2-b-1}), we have
    \begin{eqnarray*}\label{yu-12-9-3}
    \|S^*((k+1)T)\varphi\|_{H'}^2&\leq& C(\delta,T)\sum_{j=0}^{k-1}\delta^j\int_T^{(k+1)T}
    \|B^*S^*(s)\varphi\|_{U'}^2ds+\delta^k\|S^*(T)\varphi\|_{H'}^2\nonumber\\
    &\leq&C(\delta,T)\sum_{j=0}^{k}\delta^j\int_0^{(k+1)T}\|B^*S^*(s)\varphi\|_{U'}^2ds
    +\delta^{k+1}\|\varphi\|_{H'}^2,
\end{eqnarray*}
    which leads to \eqref{yu-12-9-2} with $n=k+1$. So by the induction, \eqref{yu-12-9-2} holds for all
    $n\in\mathbb{N}^+$.

    Next, we let $\alpha=-T^{-1}\ln \delta$ (which implies $\alpha>0$ and $\delta=e^{-\alpha T}$). Then by
    \eqref{yu-12-9-2}, we have that for any $n\in\mathbb{N}^+$,
    \begin{eqnarray}\label{yu-12-9-4}
    \|S^*(nT)\varphi\|^2_{H'}&\leq& (1-\delta)^{-1}C(\delta,T)\int_0^{nT}
    \|B^*S^*(s)\varphi\|_{U'}^2ds+\delta^n\|\varphi\|_{H'}^2\nonumber\\
    &=&(1-e^{-\alpha T})^{-1}C(e^{-\alpha T},T)\int_0^{nT}\|B^*S^*(s)\varphi\|_{U'}^2ds
    +e^{-n\alpha T}\|\varphi\|_{H'}^2.
\end{eqnarray}

    We now arbitrarily fix $t\in\mathbb{R}^+$. Then  there is
    $m\in\mathbb{N}$ such that
    \begin{equation}\label{5.3w}
    mT\leq t<(m+1)T.
    \end{equation}
      In the case that $m=0$ (i.e., $t\in[0,T)$), we have
\begin{equation}\label{yu-12-9-5}
    \|S^*(t)\varphi\|_{H'}^2\leq \widehat{C}_2(\alpha,T)e^{-\alpha t}
    \|\varphi\|_{H'}^2,\;\;\mbox{with}\;\;\widehat{C}_2(\alpha,T):=\big(\sup_{\sigma\in[0,T]}
    \|S^*(\sigma)\|_{\mathcal{L}(H')}\big)^2e^{\alpha T}.
\end{equation}
    In the case that  $m\in\mathbb{N}^+$, it follows by (\ref{yu-12-9-4}) and \eqref{5.3w}
    that
\begin{eqnarray}\label{5.5w}
    \|S^*(t)\varphi\|_{H'}^2&=&\|S^*(t-mT)S^*(mT)\varphi\|_{H'}^2\leq
    \big(\sup_{\sigma\in[0,T]}\|S^*(\sigma)\|_{\mathcal{L}(H')}\big)^2
    \|S^*(mT)\varphi\|_{H'}^2\nonumber\\
    &\leq&\widehat{C}_1(\alpha,T)\int_0^t\|B^*S^*(s)\varphi\|_{U'}^2ds
    +\widehat{C}_2(\alpha,T)e^{-\alpha t}\|\varphi\|_{H'}^2,
\end{eqnarray}
where $\widehat{C}_2(\alpha,T)$ is given in (\ref{yu-12-9-5}) and
    \begin{equation*}
    \widehat{C}_1(\alpha,T):=\big(\sup_{\sigma\in[0,T]}\|S^*(\sigma)\|_{\mathcal{L}(H')}
    \big)^2(1-e^{-\alpha T})^{-1}C_1(e^{-\alpha T},T).
    \end{equation*}

    Finally, \eqref{yu-12-9-5} and \eqref{5.5w} leads to (\ref{yu-11-16-6})  with
    $C_1(\alpha)=\widehat{C}_1(\alpha,T)$ and $C_2(\alpha)=\widehat{C}_2(\alpha,T)$.

    Hence, we finish the proof of Proposition \ref{yu-proposition-12-9-1}.
\end{proof}

\subsection{Appendix B}\label{yu-appen-1}
\begin{proof}[The proof of Lemma \ref{yu-lemma-11-23-2}]

Arbitrarily fix  $\gamma>0$, $M\in \mathcal{L}(H')$  and $\varphi\in D(A^*)$.
The proof is divided into two steps.
\vskip 5pt
\noindent\emph{Step 1. We show that for each  $t\in [-\gamma,\gamma]$,  $w(t;\varphi)\in D(A^*)$ and
\begin{eqnarray}\label{yu-11-25-1}
    A^*w(t;\varphi)=M\mathcal{V}_{\alpha,\varepsilon,T}(t)\varphi-S^*(-t)M\varphi
    -\int_0^tS^*(s-t)M\mathcal{V}_{\alpha,\varepsilon,T}(s)\Delta_{\alpha,\varepsilon,T}\varphi ds.
\end{eqnarray}
(Recall that $\mathcal{V}_{\alpha,\varepsilon,T}(\cdot)$ and $\Delta_{\alpha,\varepsilon,T}$
are given in Remark \ref{reamrk2.3w}.)
}

   To this end, we arbitrarily fix  $t\in[-\gamma,\gamma]$. By
    the definition of $w(\cdot;\varphi)$, we have that for each $h\in(0,h_0)$ ($h_0>0$ is fixed arbitrarily),
\begin{eqnarray}\label{yu-11-25-2}
    \frac{S(h)-I}{h}w(t;\varphi)&=&\frac{1}{h}
    \left(\int_0^tS^*(s+h-t)M\mathcal{V}_{\alpha,\varepsilon,T}(s)\varphi ds
    -\int_0^tS^*(s-t)M\mathcal{V}_{\alpha,\varepsilon,T}(s)\varphi ds\right)\nonumber\\
    &=& \mathcal{I}_1(h)+\mathcal{I}_2(h)+\mathcal{I}_3(h),
\end{eqnarray}
    where
\begin{equation*}
\begin{cases}
    \mathcal{I}_1(h):=\frac{1}{h}\int_t^{t+h}S^*(s-t)M
    \mathcal{V}_{\alpha,\varepsilon,T}(s-h)\varphi ds,\\
    \mathcal{I}_2(h):=-\frac{1}{h}\int_0^hS^*(s-t)M\mathcal{V}_{\alpha,\varepsilon,T}(s-h)\varphi ds,\\
    \mathcal{I}_3(h):=\frac{1}{h}\int_0^tS^*(s-t)M\mathcal{V}_{\alpha,\varepsilon,T}(s)(
    \mathcal{V}_{\alpha,\varepsilon,T}(-h)-I)\varphi ds.
\end{cases}
\end{equation*}

With respect to $ \mathcal{I}_1(h)$, we claim
   \begin{equation}\label{yu-11-25-3}
    \lim_{h\to 0^+}\mathcal{I}_1(h)=M\mathcal{V}_{\alpha,\varepsilon,T}(t)\varphi.
\end{equation}
    Indeed, we have the following facts: First, it is obvious that
\begin{eqnarray}\label{yu-11-25-4}
    \mathcal{I}_1(h)=\frac{1}{h}\int_t^{t+h}S^*(s-t)M(\mathcal{V}_{\alpha,\varepsilon,T}(s-h)-
    \mathcal{V}_{\alpha,\varepsilon,T}(t))\varphi ds
    +\frac{1}{h}\int_t^{t+h}S^*(s-t)M\mathcal{V}_{\alpha,\varepsilon,T}(t)\varphi ds.
\end{eqnarray}
    Second, it follows  from the strong continuity of $S^*(\cdot)$ that
\begin{equation}\label{yu-11-25-5}
    \lim_{h\to 0^+}\frac{1}{h}\int_t^{t+h}S^*(s-t)M\mathcal{V}_{\alpha,\varepsilon,T}(t)\varphi ds
    =M\mathcal{V}_{\alpha,\varepsilon,T}(t)\varphi.
\end{equation}
    Third, direct computations show
\begin{eqnarray}\label{yu-11-25-6}
    &\;&\left\|\frac{1}{h}\int_t^{t+h}S^*(s-t)M(\mathcal{V}_{\alpha,\varepsilon,T}(s-h)-
    \mathcal{V}_{\alpha,\varepsilon,T}(t))\varphi ds\right\|_{H'}\nonumber\\
    &=&\left\|\frac{1}{h}\int_t^{t+h} S^*(s-t)M\int_{s-h}^t\mathcal{V}_{\alpha,\varepsilon,T}(\sigma)\Delta_{\alpha,\varepsilon,T}
    \varphi d\sigma ds\right\|_{H'}\nonumber\\
    &\leq& h\sup_{\sigma\in[0,h_0]}\|S^*(\sigma)\|_{\mathcal{L}(H')}
    \|M\|_{\mathcal{L}(H')} \sup_{\sigma\in[t-h_0,t]}\|\mathcal{V}_{\alpha,\varepsilon,T}
    (\sigma)\|_{\mathcal{L}(H')}\|\Delta_{\alpha,\varepsilon,T}\varphi\|_{H'}\to0\;\;\mbox{as}\;\;h\to 0^+.
\end{eqnarray}
    (Here, we used  that $\sup_{s\in[t,t+h]}|t-s+h|=h$.)
    Now, (\ref{yu-11-25-3}) follows by (\ref{yu-11-25-4}), (\ref{yu-11-25-5}) and (\ref{yu-11-25-6}) at once.

    With respect to $ \mathcal{I}_2(h)$, we can use a very similar way to that used in the proof of  (\ref{yu-11-25-3}) to find
\begin{equation}\label{yu-11-25-7}
    \lim_{h\to 0^+}\mathcal{I}_2(h)=-S^*(-t)M\varphi.
\end{equation}

 With respect to $ \mathcal{I}_3(h)$, we claim
\begin{equation}\label{yu-11-25-8}
    \lim_{h\to0^+}\mathcal{I}_3(h)=-\int_0^t S^*(s-t)M\mathcal{V}_{\alpha,\varepsilon,T}(s)
    \Delta_{\alpha,\varepsilon,T}\varphi ds.
\end{equation}
    For this purpose, several facts are given in order: First, since $\Delta_{\alpha,\varepsilon,T}$
    is the generator of the $C_0$-group $\mathcal{V}_{\alpha,\varepsilon,T}(\cdot)$ and
    $\varphi\in D(A^*)(=D(\Delta_{\alpha,\varepsilon,T}))$, we have
\begin{equation}\label{yu-11-25-9}
    \mathcal{I}_3(h)=-\int_0^tS^*(s-t)M\mathcal{V}_{\alpha,\varepsilon,T}(s)\left(\frac{1}{h}
    \int_{-h}^0\mathcal{V}_{\alpha,\varepsilon,T}(\sigma)\Delta_{\alpha,\varepsilon,T}\varphi
    d\sigma\right)ds.
\end{equation}
    Second, direct computations show that  for each $s\in[-|t|,|t|]$,
\begin{eqnarray}\label{yu-11-25-10w}
    &\;&\left\|S^*(s-t)M\mathcal{V}_{\alpha,\varepsilon,T}(s)
    \left(\frac{1}{h}\int_{-h}^0\mathcal{V}_{\alpha,\varepsilon,T}(\sigma)
    \Delta_{\alpha,\varepsilon,T}\varphi d\sigma\right)\right\|_{H'}\nonumber\\
    &\leq&\sup_{\sigma\in[-2|t|,2|t|]}\|S^*(\sigma)\|_{\mathcal{L}(H')}
    \|M\|_{\mathcal{L}(H')}\sup_{\sigma\in[-2|t|-h_0,2|t|]}\|\mathcal{V}_{\alpha,\varepsilon,T}
    (\sigma)\|_{\mathcal{L}(H')}\|\Delta_{\alpha,\varepsilon,T}\varphi\|_{H'}.
\end{eqnarray}
    Third, the strong continuity of $\mathcal{V}_{\alpha,\varepsilon,T}(\cdot)$ leads to
\begin{equation}\label{yu-11-25-11w}
    \lim_{h\to0^+}\frac{1}{h}
    \int_{-h}^0\mathcal{V}_{\alpha,\varepsilon,T}(\sigma)\Delta_{\alpha,\varepsilon,T}\varphi
    d\sigma=\Delta_{\alpha,\varepsilon,T}\varphi.
\end{equation}
   Now, by \eqref{yu-11-25-10w}
   and \eqref{yu-11-25-11w}, we can apply  the dominated convergence theorem in
   \eqref{yu-11-25-9} to get  (\ref{yu-11-25-8}).
\par
   Finally, it follows from  (\ref{yu-11-25-2}), (\ref{yu-11-25-3}), (\ref{yu-11-25-7}) and (\ref{yu-11-25-8}) that
\begin{equation*}\label{yu-11-25-12}
    \lim_{h\to 0^+}\frac{S^*(h)-I}{h}w(t;\varphi)\;\;\mbox{exists, i.e., }\;\;w(t;\varphi)\in D(A^*)
\end{equation*}
    (see \cite[Chapter 1, Section 1.1]{Pazy}) and that (\ref{yu-11-25-1}) holds.
\vskip 5pt
   \noindent \emph{Step 2. We prove  (\ref{yu-11-25-bbb-1}).}

   By the note $(ii)$ in Remark \ref{yu-remark-12-2-1} and Step 1, we  have
\begin{eqnarray}\label{yu-11-25-13}
    &\;&\int_{-\gamma}^\gamma\|B^*w(t;\varphi)\|_{U'}^2dt=
    \int_{-\gamma}^\gamma\|E^*(\bar{\lambda}I+A^*)w(t;\varphi)\|^2_{U'}dt\nonumber\\
    &\leq&16\|E^*\|_{\mathcal{L}(H';U')}^2
    \int_{-\gamma}^\gamma\biggl(|\lambda|^2\|w(t;\varphi)\|_{H'}^2
    +\|M\mathcal{V}_{\alpha,\varepsilon,T}(t)\varphi\|^2_{H'}
    +\|S^*(-t)M\varphi\|^2_{H'}\nonumber\\
    &\;&+\left\|\int_0^tS^*(s-t)M\mathcal{V}_{\alpha,\varepsilon,T}(s)
    \Delta_{\alpha,\varepsilon,T}\varphi ds\right\|^2_{H'}\biggl)dt.
\end{eqnarray}
   (Here, $\bar{\lambda}$ is the conjugate of $\lambda$.)
   Since
\begin{equation*}\label{yu-11-25-14}
\begin{cases}
    \sup_{t\in[-\gamma,\gamma]}\|w(t;\varphi)\|_{H'}\leq
    \sup_{\sigma\in[-\gamma,\gamma]}\|S^*(\sigma)\|_{\mathcal{L}(H')}
    \int_{-\gamma}^{\gamma}\|M\mathcal{V}_{\alpha,\varepsilon,
    T}(s)\varphi\|_{H'}ds,\\
    \sup_{t\in[-\gamma,\gamma]}\|S^*(-t)M\varphi\|_{H'}
    \leq \sup_{t\in[-\gamma,\gamma]}\|S^*(t)\|_{\mathcal{L}(H')}\|M\varphi\|_{H'},\\
    \sup_{t\in[-\gamma,\gamma]}\left\|\int_0^tS^*(s-t)M\mathcal{V}_{\alpha,\varepsilon,T}(s)
    \Delta_{\alpha,\varepsilon,T}\varphi ds\right\|_{H'}\\
    \;\;\;\;\;\;\;\;\;\;\;\leq \sup_{\sigma\in[-\gamma,\gamma]}\|S^*(\sigma)\|_{\mathcal{L}(H')}
    \int_{-\gamma}^{\gamma}\|M\mathcal{V}_{\alpha,\varepsilon,
    T}(s)\Delta_{\alpha,\varepsilon,T}\varphi\|_{H'}ds,
\end{cases}
\end{equation*}
   applying  the H\"{o}lder inequality in (\ref{yu-11-25-13}) leads to
    (\ref{yu-11-25-bbb-1}).
\par
\vskip 5pt

By Steps 1 and Step 2, we finish the proof of Lemma \ref{yu-lemma-11-23-2}.
\end{proof}
\subsection{Appendix C}\label{yu-appen-2}
\begin{proof}[The proof of Lemma \ref{yu-lemma-11-23-1}]
Arbitrarily fix $t\in\mathbb{R}$ and $\varphi,\psi\in D(A^*)$.
 The proof is divided into two steps.
\vskip 5pt
  \noindent  \emph{Step 1. We prove
    \begin{eqnarray}\label{yu-11-23-5}
    &\;&\langle\Pi_{\alpha,\varepsilon,T}\varphi,\psi\rangle_{H,H'}-\mathcal{E}(t)\nonumber\\
    &=&\langle \Pi_{\alpha,\varepsilon,T}\mathcal{V}_{\alpha,\varepsilon,T}(t)\varphi,S^*(-t)\psi\rangle_{H,H'}
    +TC_1(\alpha)e^{\alpha T}\int_0^t\langle J_1B^*\mathcal{V}_{\alpha,\varepsilon,T}(s)\varphi,
    B^*S^*(-s)\psi\rangle_{U,U'}ds,
\end{eqnarray}
    where
    $\mathcal{E}(t)=\mathcal{E}_1(t)+\mathcal{E}_2(t)+\mathcal{E}_3(t)+\mathcal{E}_4(t)$
    with
\begin{equation*}\label{yu-11-23-7}
\begin{cases}
    \mathcal{E}_1(t):=\int_0^t\langle\Pi_{\alpha,\varepsilon,T} S^*(s-t)\Pi_{\alpha,\varepsilon,T}^{-1}
    \widehat{Q_{\alpha,\varepsilon,T}}\mathcal{V}_{\alpha,\varepsilon,T}(s)\varphi,
    S^*(-t)\psi\rangle_{H,H'}ds,\\
    \mathcal{E}_2(t):=TC_1(\alpha)e^{\alpha T}\int_0^t\left\langle J_1B^*\left(\int_0^s S^*(\gamma-s)\Pi_{\alpha,\varepsilon,T}^{-1}\widehat{Q_{\alpha,\varepsilon,T}}
    \mathcal{V}_{\alpha,\varepsilon,T}(\gamma)\varphi d\gamma\right),B^*S^*(-s)\psi\right\rangle_{U,U'}ds,\\
    \mathcal{E}_3(t):=-\int_0^t\langle \widehat{Q_{\alpha,\varepsilon,T}}\mathcal{V}_{\alpha,\varepsilon,T}(s)
    \varphi,S^*(-s)\psi\rangle_{H,H'}ds,\\
    \mathcal{E}_4(t):=-\int_0^t\left\langle\widehat{Q_{\alpha,\varepsilon,T}}
    \left(\int_0^sS^*(\gamma-s)\Pi_{\alpha,\varepsilon,T}^{-1}
    \widehat{Q_{\alpha,\varepsilon,T}}
    \mathcal{V}_{\alpha,\varepsilon,T}(\gamma)\varphi d\gamma\right),S^*(-s)\psi\right\rangle_{H,H'}ds.
\end{cases}
\end{equation*}
    (Here $\widehat{Q_{\alpha,\varepsilon,T}}$ is defined by (\ref{yu-11-23-3}).)
}

First of all, it is obvious that $\mathcal{E}_1(t)$, $\mathcal{E}_3(t)$ and $\mathcal{E}_4(t)$
are well defined  since
    $\Pi_{\alpha,\varepsilon,T}^{-1}\widehat{Q_{\alpha,\varepsilon,T}}\in\mathcal{L}(H')$ and $\varphi,\psi\in D(A^*)$. Second, by Lemma \ref{yu-lemma-11-23-2} and the assumption $(H_3)$ that $\mathcal{E}_2(t)$ is well defined since $\varphi,\psi\in D(A^*)$.

    Next, it follows  from Proposition \ref{yu-proposition-11-12-1} that for any $s\in\mathbb{R}$,
\begin{eqnarray}\label{yu-11-15-102}
    &\;&\langle \Pi_{\alpha,\varepsilon,T}A^*S^*(-s)\varphi,S^*(-s)\psi\rangle_{H,H'}
    +\langle \Pi_{\alpha,\varepsilon,T}S^*(-s)\varphi, A^*S^*(-s)\psi\rangle_{H,H'}\nonumber\\
    &=&TC_1(\alpha)e^{\alpha T}\langle J_1B^*S^*(-s)\varphi,B^*S^*(-s)\psi\rangle_{U,U'}
    -\langle\widehat{Q_{\alpha,\varepsilon,T}}
    S^*(-s)\varphi,S^*(-s)\psi\rangle_{H,H'}.
\end{eqnarray}
     Then, since
    \begin{eqnarray*}\label{yu-11-15-103}
    \langle \Pi_{\alpha,\varepsilon,T}A^*S^*(-s)\varphi,S^*(-s)\psi\rangle_{H,H'}
    +\langle \Pi_{\alpha,\varepsilon,T}S^*(-s)\varphi, A^*S^*(-s)\psi\rangle_{H,H'}\nonumber\\
    =-\left[\frac{d}{d\gamma}\langle \Pi_{\alpha,\varepsilon,T}S^*(-\gamma)\varphi,S^*(-\gamma)\psi\rangle_{H,H'}
    \right]_{\gamma=s},\;\;s\in\mathbb{R},
\end{eqnarray*}
    we get, by  integrating (\ref{yu-11-15-102}) with respect to $s$ over $(0,t)$, that
\begin{eqnarray}\label{yu-11-15-104}
    &\;&
    \langle \Pi_{\alpha,\varepsilon,T}\varphi,\psi\rangle_{H,H'}\nonumber\\
    &=&\langle \Pi_{\alpha,\varepsilon,T}S^*(-t)\varphi,S^*(-t)\psi\rangle_{H,H'}
    +TC_1(\alpha)e^{\alpha T}\int_0^t\langle J_1B^*S^*(-s)\varphi,B^*S^*(-s)\psi\rangle_{U,U'}ds\nonumber\\
    &\;&-\int_0^t\langle \widehat{Q_{\alpha,\varepsilon,T}}S^*(-s)\varphi,S^*(-s)\psi\rangle_{H,H'}ds.
\end{eqnarray}
    Finally, (\ref{yu-11-23-5}) follows from \eqref{yu-11-15-104}
    and (\ref{yu-11-23-1}).

\vskip 5pt
  \noindent  \emph{Step 2.  We show
\begin{equation}\label{yu-11-24-1-bbbb}
    \mathcal{E}(t)=0.
\end{equation}}
     When this is done, (\ref{yu-11-23-2}) follows from \eqref{yu-11-24-1-bbbb}
     and (\ref{yu-11-23-5}) at once.

     The remainder is to show \eqref{yu-11-24-1-bbbb}.
     Let
     $n^*\in\mathbb{N}^+$ be such that  $nI-A^*$ is invertible for all $n\geq n^*$.
 We define, for each $n\geq n^*$,
\begin{eqnarray*}\label{yu-11-23-8}
    \mathcal{F}_n(t):=\int_0^t\langle \Pi_{\alpha,\varepsilon,T}S^*(s-t)\mathcal{R}_n\Pi_{\alpha,\varepsilon,T}^{-1}
    \widehat{Q_{\alpha,\varepsilon,T}}
    \mathcal{V}_{\alpha,\varepsilon,T}(s)\varphi,S^*(s-t)S^*(-s)\psi\rangle_{H,H'}ds,
\end{eqnarray*}
    where $\{\mathcal{R}_n\}_{n\geq n^*}$ are given by (\ref{yu-kkk-1}).
     By (\ref{yu-11-15-104}), we find
\begin{equation}\label{yu-11-23-8-1}
    \mathcal{F}_n(t)=\mathcal{F}_{n,1}(t)+\mathcal{F}_{n,2}(t)+\mathcal{F}_{n,3}(t),\; n\geq n^*,
\end{equation}
    where
\begin{equation*}\label{yu-11-23-9}
\begin{cases}
    \mathcal{F}_{n,1}(t):=\int_0^t\langle
    \Pi_{\alpha,\varepsilon,T}\mathcal{R}_n\Pi_{\alpha,\varepsilon,T}^{-1}
    \widehat{Q_{\alpha,\varepsilon,T}}\mathcal{V}_{\alpha,\varepsilon,T}(s)\varphi, S^*(-s)\psi\rangle_{H,H'}ds,\\
    \mathcal{F}_{n,2}(t):=-TC_1(\alpha)e^{\alpha T}\int_0^t
    \int_0^{t-s}\langle J_1B^*S^*(-\gamma)\mathcal{R}_n\Pi_{\alpha,\varepsilon,T}^{-1}
    \widehat{Q_{\alpha,\varepsilon,T}}
    \mathcal{V}_{\alpha,\varepsilon,T}
    (s)\varphi,B^*S^*(-(\gamma+s))\psi\rangle_{U,U'}d\gamma ds,\\
    \mathcal{F}_{n,3}(t):=\int_0^t\int_0^{t-s}\langle \widehat{Q_{\alpha,\varepsilon,T}}S^*(-\gamma)\mathcal{R}_n\Pi_{\alpha,\varepsilon,T}^{-1}
    \widehat{Q_{\alpha,\varepsilon,T}}\mathcal{V}_{\alpha,\varepsilon,T}(s)\varphi,
    S^*(-(\gamma+s))\psi\rangle_{H,H'}d\gamma ds.
\end{cases}
\end{equation*}
Several facts are given in order: First,
since $\{\mathcal{R}_n\}_{n\geq n^*}$ is uniformly bounded (see (\ref{yu-11-24-2})),
we can use (\ref{yu-kkkk-2}) and  the dominated convergence theorem to find
\begin{equation}\label{yu-11-23-10}
    \lim_{n\to+\infty}\mathcal{F}_n(t)=\mathcal{E}_1(t)\;\;
    \mbox{and}\;\;\lim_{n\to+\infty}\mathcal{F}_{n,1}(t)=-\mathcal{E}_3(t).
\end{equation}
Second, direct computations show     that when $n\geq n^*$,
\begin{eqnarray*}\label{yu-11-23-11}
    \mathcal{F}_{n,3}(t)&=&\int_0^t\int_s^t\langle \widehat{Q_{\alpha,\varepsilon,T}}S^*(s-\sigma)\mathcal{R}_n\Pi_{\alpha,\varepsilon,T}^{-1}
    \widehat{Q_{\alpha,\varepsilon,T}}\mathcal{V}_{\alpha,\varepsilon,T}(s)\varphi,
    S^*(-\sigma)\psi\rangle_{H,H'}d\sigma ds\nonumber\\
    &=& \int_0^t\int_0^\sigma\langle \widehat{Q_{\alpha,\varepsilon,T}}S^*(s-\sigma)\mathcal{R}_n\Pi_{\alpha,\varepsilon,T}^{-1}
    \widehat{Q_{\alpha,\varepsilon,T}}\mathcal{V}_{\alpha,\varepsilon,T}(s)\varphi,
    S^*(-\sigma)\psi\rangle_{H,H'}dsd\sigma.
\end{eqnarray*}
    This, along with (\ref{yu-kkkk-2}) and the dominated convergence theorem, yields
\begin{equation}\label{yu-11-23-11-b}
    \lim_{n\to+\infty}\mathcal{F}_{n,3}(t)=-\mathcal{E}_4(t).
\end{equation}
\par
   We now claim
\begin{equation}\label{yu-11-24-1}
    \lim_{n\to+\infty}\mathcal{F}_{n,2}(t)=-\mathcal{E}_2(t).
\end{equation}
    To this end, we  define, for each
     $n\geq n^*$,
\begin{equation}\label{yu-11-24-15}
    \mathcal{H}_n(t):=\int_0^t\left\langle J_1B^*\left(\int_0^s S^*(\gamma-s)\mathcal{R}_n\Pi_{\alpha,\varepsilon,T}^{-1}\widehat{Q_{\alpha,\varepsilon,T}}
    \mathcal{V}_{\alpha,\varepsilon,T}(\gamma)\varphi d\gamma\right),B^*S^*(-s)\psi\right\rangle_{U,U'}ds.
\end{equation}
   Then we have two observations: First, since $\mathcal{R}_nS^*(\cdot)=S^*(\cdot)\mathcal{R}_n$ and $B^*\mathcal{R}_n\in \mathcal{L}(H')$ (see \eqref{yu-11-24-16w}), we have
\begin{equation*}\label{yu-11-24-17}
    \mathcal{H}_n(t)=
    \int_0^t\int_0^s\langle J_1B^*S^*(\gamma-s)\mathcal{R}_n\Pi_{\alpha,\varepsilon}^{-1}
    \widehat{Q_{\alpha,\varepsilon,T}}\mathcal{V}_{\alpha,\varepsilon,T}(\gamma)\varphi,
    B^*S^*(-s)\psi\rangle_{U,U'}d\gamma ds,\;n\geq n^*.
\end{equation*}
    (See \cite[Lemma 11.45]{Aliprantis-Border}.) By  using some simple integral transformations in the above, we find
\begin{equation*}\label{yu-11-24-18}
    \mathcal{H}_n(t)=\int_0^t\int_0^{t-\gamma}\langle J_1B^*S^*(-\sigma)\mathcal{R}_n\Pi_{\alpha,\varepsilon,T}^{-1}
    \widehat{Q_{\alpha,\varepsilon,T}}\mathcal{V}_{\alpha,\varepsilon,T}(\gamma)\varphi,B^*S^*
    (-(\gamma+\sigma))\psi\rangle_{U,U'}d\sigma d\gamma,\;n\geq n^*,
\end{equation*}
   which implies
   \begin{equation}\label{yu-11-24-19}
    \mathcal{F}_{n,2}(t)=-TC_1(\alpha)e^{\alpha T}\mathcal{H}_n(t)
    \;\;\mbox{for each}\;\;n\geq n^*.
\end{equation}
   Second,  if we let, for each $n\geq n^*$,
\begin{equation}\label{yu-11-24-20}
    w_n(s):=\int_0^sS^*(\gamma-s)(\mathcal{R}_n-I)\Pi_{\alpha,\varepsilon,T}^{-1}
    \widehat{Q_{\alpha,\varepsilon,T}}\mathcal{V}_{\alpha,\varepsilon,T}(\gamma)\varphi d\gamma,
\end{equation}
    then, similar to the proof of (\ref{12-18-27w}), we can show
\begin{equation}\label{yu-b0-11-26-1}
    \lim_{n\to+\infty}\int_{-|t|}^{|t|}\|B^*w_n(s)\|_{U'}^2ds
    =0.
\end{equation}
        Now, it follows  by the assumption $(H_3)$, (\ref{yu-11-24-15}), (\ref{yu-11-24-20}) and (\ref{yu-b0-11-26-1}) that
\begin{equation}\label{yu-11-24-22}
    \lim_{n\to+\infty}\mathcal{H}_n(t)
    =\int_0^t\left\langle J_1B^*\left(\int_0^s S^*(\gamma-s)\Pi_{\alpha,\varepsilon,T}^{-1}\widehat{Q_{\alpha,\varepsilon,T}}
    \mathcal{V}_{\alpha,\varepsilon,T}(\gamma)\varphi d\gamma\right),B^*S^*(-s)\psi\right\rangle_{U,U'}ds,
\end{equation}
    which, along with the definition of $\mathcal{E}_2(t)$, (\ref{yu-11-24-19}) and (\ref{yu-11-24-22}),
    yields  (\ref{yu-11-24-1}).
    Finally, \eqref{yu-11-24-1-bbbb} follows by  (\ref{yu-11-23-8-1}), (\ref{yu-11-23-10}), (\ref{yu-11-23-11-b}) and
    (\ref{yu-11-24-1}).

Thus, we  finish the proof of Lemma \ref{yu-lemma-11-23-1}.
\end{proof}

\end{document}